\documentclass[a4paper, reqno]{amsart}
\usepackage{graphicx} 
\usepackage[utf8]{inputenc}

\title{Decomposing multipersistence modules using functor calculus}
\author{Bjørnar Gullikstad Hem}
\thanks{Email: bjornar.hem@epfl.ch }

\usepackage{amsmath}
\usepackage{amsthm}
\usepackage{amssymb}
\usepackage[hidelinks,colorlinks=true]{hyperref}
\hypersetup{allcolors=[rgb]{0,0.31,0.62}}
\usepackage{thmtools}   
\usepackage{mathrsfs}
\usepackage{enumitem}
\usepackage{mathtools}
\usepackage{caption}
\usepackage{subcaption}

\usepackage{tikz}
\usetikzlibrary{matrix} 
\newcommand{\diagram}[3]{\matrix (#1) [matrix of math nodes,row
  sep={#2},column sep={#3},text height=1.5ex,text
  depth=0.25ex]}
\usepackage{tikz-cd}

\theoremstyle{plain}
\newtheorem{theorem}{Theorem}[section]
\newtheorem{proposition}[theorem]{Proposition}
\newtheorem{corollary}[theorem]{Corollary}
\newtheorem{lemma}[theorem]{Lemma}

\theoremstyle{definition}
\newtheorem{definition}[theorem]{Definition}
\newtheorem{example}[theorem]{Example}
\newtheorem{remark}[theorem]{Remark}

\DeclareMathOperator{\id}{id}
\DeclareMathOperator{\colim}{colim}

\DeclareMathOperator{\Ima}{im}

\newcommand{\R}{\mathbb{R}} 

\newcommand{\Fb}{\mathbb{Fb}}

\newcommand{\A}{\mathscr{A}}
\newcommand{\B}{\mathscr{B}}
\newcommand{\C}{\mathscr{C}}
\newcommand{\M}{\mathscr{M}}



\newcommand{\Vecs}{\mathsf{Vec}}

\newcommand{\Po}{\mathcal{P}}
\newcommand{\X}{\mathcal{X}}

\newcommand{\Ch}{\mathrm{Ch}}
\newcommand{\Nn}{\mathbb{N}}

\DeclareMathOperator{\cofib}{cofib}
\DeclareMathOperator{\fib}{fib}
\DeclareMathOperator{\holim}{holim}
\DeclareMathOperator{\hocolim}{hocolim}

\DeclareMathOperator{\coker}{coker}
\DeclareMathOperator{\Fun}{Fun}

\DeclareMathOperator{\Lan}{Lan}
\DeclareMathOperator{\Ran}{Ran}
\newcommand{\projPvec}{\Fun_{\textrm{proj}}(P, \Vecs_{\Fb})}
\DeclareMathOperator{\jdim}{jdim}
\DeclareMathOperator{\mdim}{mdim}

\numberwithin{equation}{subsection}

\newtheorem*{proposition*}{Proposition}
\newtheorem*{theorem*}{Theorem}

\newsavebox{\pullback}
\sbox\pullback{%
\begin{tikzpicture}%
\draw (0,1ex) -- (1ex,1ex);%
\draw (1ex,0ex) -- (1ex,1ex);%
\end{tikzpicture}}

\begin{document}

\maketitle

\begin{abstract}
We apply \emph{poset cocalculus}, a functor calculus framework for functors out of a poset, to study the problem of decomposing multipersistence modules into simpler components. We both prove new results in this topic and offer a new perspective on already established results.
In particular, we show that a pointwise finite-dimensional bipersistence module is middle-exact if and only if it is isomorphic to the homology of a homotopy degree 1 functor, from which we deduce a novel, more synthetic proof of the interval decomposability of middle-exact bipersistence modules.
We also give a new decomposition theorem for middle-exact multipersistence modules indexed over a finite poset, stating that such a module can always be written as a direct sum of a projective module, an injective module, and a bidegree 1 module, even in the case where it is not pointwise finite-dimensional.
\end{abstract}

\setcounter{tocdepth}{1}
\tableofcontents

\section{Introduction}

\subsection{Background}

A \emph{persistence module} is a functor $F \colon P \to \Vecs_{\Fb}$, where $P$ is a poset. We will use the word \emph{multipersistence module} when referring to a persistence module whose source poset is a product of two or more total orders, and the word \emph{bipersistence module} when the source poset is a product of exactly two total orders.
A multipersistence module from a product of $n$ total orders is called a $n$-parameter multipersistence module.

Persistence modules are the main objects in persistent homology, a key technique in the field of topological data analysis (TDA).
In persistent homology, one constructs a persistence module from data such as a point cloud.
From the persistence module, one then typically constructs a visual representation known as the \emph{barcode diagram}. Construction of the barcode diagram is possible due to the structure theorem \cite{interval_decomp_source, structure_theorem_crawley, structure_theorem_webb},
which says that a persistence module can be decomposed into a direct sum of simple components known as \emph{interval modules}.
However, the structure theorem applies only when the source poset is a total order, such as $\R$. Multipersistence modules, on the other hand, are not in general interval decomposable (see, e.g., \cite[Example 8.3]{BotnanLesnickMultipersistence}).

Bipersistence modules are said to be \emph{middle-exact} if they satisfy the following local condition: for every $(x,y) \le (x',y')$, the chain complex
\begin{equation*}
    F(x,y) \to F(x,y') \oplus F(x',y) \to F(x',y')
\end{equation*}
is exact in the middle.
Middle-exact bipersistence modules arise naturally from \emph{interlevel set persistent homology} \cite{interlevelset_persistence_carlsson}, in which one studies the homology of preimages of intervals under a real-valued function on a topological space (see, e.g., \cite[Section 10.2]{BotnanLesnickMultipersistence} for more details).
Interlevel set persistence has been applied to, for example, analysis of digital images \cite{edelsbrunner}.
In \cite{botnanMiddleExactness}, Botnan and Crawley proved that all pointwise finite-dimensional (p.f.d.) middle-exact bipersistence modules are interval decomposable, and that the intervals are of a special form, called \emph{blocks}.

In \cite{lebovici2024localcharacterizationblockdecomposabilitymultiparameter},
the notion of middle exactness is generalized from bipersistence modules to arbitrary multipersistence modules, giving rise to the notions of \emph{$k$-middle-exact modules} for $k \ge 2$.
The authors show that p.f.d. multipersistence modules that are $k$-middle-exact for all $k \ge 2$ are always interval decomposable.

In \cite{hem2025posetfunctorcocalculusapplications}, we introduced \emph{poset cocalculus}, a novel framework for studying functors out of a poset.
Given a functor $F$ from a distributive lattice to a (sufficiently nice) model category, poset cocalculus produces a telescope of codegree $n$ approximations of the functor,
\begin{equation*}
\begin{aligned}
\begin{tikzpicture}
    \diagram{d}{3em}{3em}{
        \vdots & \ \\
        T_1 F & \ \\
        T_0 F & F. \\
    };
    
    \path[->,font = \scriptsize, midway]
    (d-2-1) edge (d-1-1)
    (d-1-1) edge (d-3-2)
    (d-3-1) edge (d-2-1)
    (d-2-1) edge (d-3-2)
    (d-3-1) edge (d-3-2);
\end{tikzpicture}
\end{aligned}
\end{equation*}
where a functor is codegree $n$  if it takes strongly bicartesian $(n+1)$-cubes to homotopy cocartesian $(n+1)$-cubes.
We also proved that taking the codegree $n$ approximation of a persistence module is stable under a certain generalized interleaving distance.
Furthermore, we defined a dual, \emph{poset calculus}, that gives a Taylor \emph{tower} of \emph{degree $n$ approximations}, denoted $T^n F$.

\subsection{Main contributions}

Relations between middle exactness and functor calculus have been explored previously in \cite[Chapter 5.7]{lerch_thesis}.
In \cite{lerch_thesis}, however, the author uses the existing theory of Goodwillie calculus and applies it to functors between $\infty$-categories, and shows that any 1-excisive\footnote{1-excisive is the analogue of ``degree 1'' in Goodwillie calculus.} functor gives rise to a middle-exact functor.
Here, we use the framework of poset cocalculus, which allows us to apply functor calculus concepts directly to functors out of a poset.
One main advantage of this framework is that the universal (co)degree $n$ approximations can be easily computed through an explicit formula established in \cite[Definition 4.21]{hem2025posetfunctorcocalculusapplications}.
Using this approach, we prove several new results, demonstrating the power of poset cocalculus as a tool for analyzing multipersistence modules.

We show that a p.f.d. bipersistence module is interval decomposable if it is either codegree 1 or degree 1.
\begin{proposition*}[\autoref{prop:codegree_1_interval_decomposable}]
    A codegree 1 p.f.d. functor $F \colon [0,\infty]^2 \to \Vecs_{\Fb}$ decomposes into a direct sum of interval modules over death blocks, vertical blocks, and horizontal blocks.
    
    Dually, a degree 1 p.f.d. functor $F \colon [0,\infty]^2 \to \Vecs_{\Fb}$ decomposes into a direct sum of interval modules over birth blocks, vertical blocks, and horizontal blocks.
\end{proposition*}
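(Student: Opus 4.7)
The plan is to deduce the proposition from the Botnan--Crawley structure theorem \cite{botnanMiddleExactness} for middle-exact p.f.d.\ bipersistence modules, by observing that codegree 1 (respectively degree 1) is strictly stronger than middle-exactness and then checking which block types can appear in the decomposition.

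First I would show that codegree 1 implies middle-exactness. The hypothesis says that every square indexed by $(x,y) \le (x',y')$ is sent to a pushout in $\Vecs_{\Fb}$, and since such pushouts are computed as the cokernel of $a \mapsto (f(a), -g(a))$, the induced sequence
\begin{equation*}
F(x,y) \to F(x, y') \oplus F(x', y) \to F(x', y') \to 0
\end{equation*}
is exact; in particular $F$ is middle-exact. Applying Botnan--Crawley then decomposes $F$ as a direct sum of interval modules over birth, death, horizontal, and vertical blocks. Since the codegree 1 condition is stable under direct summands (pushout squares are preserved by retracts in $\Vecs_{\Fb}$), each block summand must itself be codegree 1, and it remains to determine which of the four block types actually supports such an interval module.

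The key obstruction occurs at the corner of a birth block $B = \{(x,y) : x \ge a, y \ge b\}$: for the square $(a-\varepsilon_1, b - \varepsilon_2) \le (a, b)$, three of the four $F$-values are $0$ while $F(a,b) = \Fb$, so the pushout of $0 \leftarrow 0 \to 0$ is $0$ rather than $\Fb$. Hence birth blocks are excluded. Conversely, for horizontal, vertical, and death blocks, a case analysis on how each square sits relative to the block boundary shows that every pushout collapses to the correct apex (in every straddling case one of the incoming legs is zero, forcing the quotient to match the top-right vertex), giving the stated decomposition.

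For the dual statement, the argument is entirely symmetric: a degree 1 functor sends each square to a pullback in $\Vecs_{\Fb}$, which is likewise middle-exact, so Botnan--Crawley again applies; the dual corner obstruction at a death block now excludes those summands, leaving birth, horizontal, and vertical blocks. The main technical obstacle throughout is the block-by-block verification of the (co)degree 1 condition, which is a routine but somewhat tedious case analysis once the shapes of the four block types are fixed.
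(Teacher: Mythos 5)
Your proof is correct in its mathematics, but it takes a genuinely different route from the paper's, and this difference matters for the paper's larger architecture. The paper's argument is constructive and self-contained within poset cocalculus: it uses $F \cong T_1 F = \Lan_i F|_{P_{\le 1}}$ where $P_{\le 1} = ([0,\infty]\times\{0\}) \cup (\{0\}\times[0,\infty])$ is the union of the two axes, decomposes $F|_{P_{\le 1}}$ into intervals over this essentially one-dimensional poset (an elementary fact, \autoref{lemma:axises_decomposable}), pushes the decomposition through the colimit-preserving $\Lan_i$, and then directly computes $\Lan_i \Fb_I$ for each interval $I \subseteq P_{\le 1}$ to see that the result is a death, vertical, or horizontal block module. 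Your argument instead outsources the decomposition to the Botnan--Crawley theorem (after correctly observing that codegree 1 implies middle exactness, since pushout squares in $\Vecs_\Fb$ have right-exact associated complexes), and then rules out birth blocks by the corner obstruction, using the fact that codegree 1 passes to direct summands. This is sound, but it undercuts one of the paper's stated goals: the corollary after \autoref{thm:main_theorem} derives interval decomposability of middle-exact bipersistence modules \emph{from} \autoref{prop:codegree_1_interval_decomposable} and \autoref{thm:main_theorem}, presenting it as a new, more synthetic proof of Botnan--Crawley. If \autoref{prop:codegree_1_interval_decomposable} were itself proved by citing Botnan--Crawley, that derivation would be circular, whereas the Kan-extension proof avoids this and, as a bonus, is effectively an algorithm for reading off the block decomposition from the restriction to the two axes. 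A small caveat in your version: some birth blocks are simultaneously horizontal or vertical blocks (e.g.\ $[0,\infty]\times[b,\infty]$), so the corner obstruction should be applied only to birth blocks $J_1 \times J_2$ with both $J_1$ and $J_2$ proper up-closed subsets; your argument implicitly assumes $a, b > 0$, which is exactly that case.
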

The definitions of death/birth/vertical/horizontal blocks are given in \autoref{def:blocks}.

We further prove the following theorem on middle-exact bipersistence modules. 
\begin{theorem*}[\autoref{thm:main_theorem}]
    Let $F \colon [0, \infty]^2 \to \Vecs_{\Fb}$ be a p.f.d. functor. The following are equivalent.
    \begin{description}
        \item[(i)] $F$ is middle-exact.
        \item[(ii)] $F$ is the direct sum of a degree 1 functor and a codegree 1 functor.
        \item[(iii)] There exists a homotopy degree 1 functor $\widehat F \colon [0, \infty]^2 \to \Ch_{\Fb}$ such that $F \cong H_0 \circ \widehat F$.
    \end{description}
\end{theorem*}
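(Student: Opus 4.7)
The plan is to establish the equivalence by the cycle (iii) $\Rightarrow$ (i) $\Rightarrow$ (ii) $\Rightarrow$ (iii). For (iii) $\Rightarrow$ (i), any commuting square with corners $(x,y) \le (x',y')$ in $[0,\infty]^2$ is strongly bicartesian, since it realises the meet and join in this distributive lattice. A homotopy degree 1 functor $\widehat F$ sends it to a homotopy cartesian square in $\Ch_{\Fb}$, which, by stability of the projective model structure, is equivalently a homotopy fiber sequence. The induced long exact sequence in homology contains the segment
\begin{equation*}
    H_0(\widehat F(x,y)) \to H_0(\widehat F(x,y')) \oplus H_0(\widehat F(x',y)) \to H_0(\widehat F(x',y')) \to 0,
\end{equation*}
and exactness at the middle term is precisely middle-exactness of $F \cong H_0 \widehat F$.

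For (ii) $\Rightarrow$ (iii), I take $\widehat F := \widehat F_d \oplus \widehat F_c$ and construct each summand separately. Using \autoref{prop:codegree_1_interval_decomposable} and its dual, I decompose $F_c$ (respectively $F_d$) into a direct sum of interval modules over death and strip blocks (respectively birth and strip blocks). Strip modules are bidegree 1 and may be placed directly in chain-degree $0$. For each death block (respectively birth block), I construct an explicit finite chain complex of bidegree 1 functors, built from half-plane modules (which are bidegree 1), realising the block as its $H_0$. The resulting $\widehat F$ is term-wise bidegree 1, so on any bicartesian square it becomes a bicomplex whose rows are short exact sequences in $\Vecs_\Fb$; a standard spectral sequence (or direct bicomplex) argument then gives acyclicity of the total complex, showing that the square is homotopy cartesian.

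The crux of the proof is (i) $\Rightarrow$ (ii): splitting a middle-exact $F$ into a direct sum of a degree 1 and a codegree 1 subfunctor, without appealing to the Botnan--Crawley interval decomposition (which, together with (ii) $\Rightarrow$ (iii), is precisely what one wants to derive as a corollary). The plan is to use the universal codegree 1 approximation $T_1 F$ from poset cocalculus, whose explicit pointwise formula in \cite[Definition 4.21]{hem2025posetfunctorcocalculusapplications} makes direct computation feasible. Concretely, I aim to show that for middle-exact $F$ the natural map $T_1 F \to F$ is a monomorphism with codegree 1 image $F_c \subseteq F$ and degree 1 cokernel $F/F_c$, and then split the short exact sequence $0 \to F_c \to F \to F/F_c \to 0$ using p.f.d.

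The main obstacle will be producing a natural splitting of this short exact sequence. Level-wise splittings exist freely in $\Vecs_\Fb$ under p.f.d., but one must choose them compatibly with all transition maps of $F$ in order to obtain a decomposition of functors. I expect the argument to hinge on a careful local analysis on bicartesian squares, exploiting that $F_c$ is codegree 1 (pushout-like) and $F/F_c$ is degree 1 (pullback-like) so that the two pieces provide complementary information on each bicartesian square, rigidifying the choice of level-wise splitting into a natural one.
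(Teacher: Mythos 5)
Your directions (iii) $\Rightarrow$ (i) and (ii) $\Rightarrow$ (iii) are essentially workable: (iii) $\Rightarrow$ (i) matches the paper's argument (though the trailing $\to 0$ in your long exact sequence is neither justified nor needed), and (ii) $\Rightarrow$ (iii) could in principle be carried out by the block-by-block construction you sketch, though the paper does this far more cleanly via a single lemma ($H_0 \widetilde T_n(\iota_* F) \cong T_n F$, \autoref{prop:H0_of_T1_tilde}, whence \autoref{prop:lifting_deg_1_to_hodeg_1}), avoiding any spectral-sequence bookkeeping.

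The genuine gap is in (i) $\Rightarrow$ (ii). You propose the short exact sequence $0 \to T_1 F \to F \to F/T_1 F \to 0$ and hope to split it by a ``careful local analysis on bicartesian squares.'' This does not work, for two reasons. First, that $T_1 F \to F$ is a monomorphism for middle-exact $F$ on $[0,\infty]^2$ is not obvious without already knowing block-decomposability: the Reedy cofibration argument that would give it applies to down-finite posets such as $\Nn \times \Nn$, not to $[0,\infty]^2$. Second, and more seriously, neither side of your sequence has the structural property needed to split it. $T_1 F$ is codegree 1 but not epimorphic, so the injectivity criterion of \autoref{lemma:increasing_deg_1_is_free} (codegree 1, epimorphic, p.f.d.\ $\Rightarrow$ injective) does not apply; and the cokernel is degree 1 and monomorphic but not projective in general --- the paper explicitly notes that the projective dual of \autoref{lemma:directed_ideal_injective} fails, e.g.\ $\Fb_{(a,\infty)}$ on $\R$ is not projective. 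The paper works from the other side: set $K = \ker(F \to T^1 F)$, observe that $F \to T^1 F$ is a pointwise epimorphism by middle-exactness, show $K$ is codegree 1 \emph{and epimorphic} (\autoref{lemma:me_deg1_coker_increasing}), hence injective by \autoref{lemma:increasing_deg_1_is_free}, hence the sequence $0 \to K \to F \to T^1 F \to 0$ splits. The choice of $T^1$ over $T_1$ here is not cosmetic; it is forced by the asymmetry between injective and projective interval modules on $[0,\infty]^2$.
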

Here, homotopy degree 1 means that it is degree 1 with respect to the canonical model structure on chain complexes over a field. In other words, not only does a homotopy degree 1 functor from $[0,\infty]^2$ gives rise to a middle-exact bipersistence module, but every bipersistence module from $[0, \infty]^2$ arises this way.

A corollary of these two results is that middle-exact p.f.d. persistence modules over $[0, \infty]^2$ are interval decomposable.
Hence, poset cocalculus leads to an alternative, more synthetic proof of the interval decomposability of middle-exact modules.

We further apply poset cocalculus to study higher-dimensional middle exactness. We introduce the terminology \emph{bidegree $n$}, which means ``both degree $n$ and codegree $n$'', and prove the following.
\begin{theorem*}[\autoref{thm:middle_exactnes_general}]
    Let $P$ be a finite product of finite total orders, and let $\Fb$ be a field. If $F \colon P \to \Vecs_{\Fb}$ be a functor that is $k$-middle-exact for all $k \ge 2$, then there exist $B,K,C \colon P \to \Vecs_{\Fb}$ with $K$ injective, $C$ projective, and $B$ bidegree 1, such that $F \cong B \oplus K \oplus C$.
\end{theorem*}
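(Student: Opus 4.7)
The plan is to exploit the degree $1$ and codegree $1$ approximations $T^1 F$ and $T_1 F$ from poset cocalculus in order to peel off the injective summand $K$ and the projective summand $C$ of $F$. First, I would reinterpret the hypothesis that $F$ is $k$-middle-exact for all $k \ge 2$ in terms of the cubical diagrams that compute $T_1 F$ and $T^1 F$. Using the explicit formula from \cite[Definition 4.21]{hem2025posetfunctorcocalculusapplications} together with the assumption that $P$ is a finite product of finite total orders, the aim is to show that the discrepancy between $F$ and $T_1 F$ lies in the subcategory of injective functors, and dually the discrepancy between $F$ and $T^1 F$ lies in the subcategory of projective functors. Concretely, I would expect to produce a short exact sequence $0 \to T_1 F \to F \to K \to 0$ with $K$ injective, and dually $0 \to C \to F \to T^1 F \to 0$ with $C$ projective.

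Given such sequences, the next step is largely formal. Injectivity of $K$ and projectivity of $C$ imply that both sequences split, yielding decompositions $F \cong T_1 F \oplus K$ and $F \cong C \oplus T^1 F$. To merge these into a single three-way decomposition, I would argue that the projective summand $C$ and the injective summand $K$ can be chosen compatibly inside $F$, using that $\Fun(P, \Vecs_{\Fb})$ admits no nonzero map in either direction between generic projective and injective indecomposables, so that $C$ sits inside $T_1 F$ and $K$ sits inside $T^1 F$. Defining $B$ to be the complement of $C \oplus K$ inside $F$, the module $B$ is then simultaneously a direct summand of $T_1 F$ and of $T^1 F$, and therefore bidegree $1$.

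The main obstacle will be the first step, namely identifying the cofiber of $T_1 F \to F$ and the fiber of $F \to T^1 F$ as injective, respectively projective, objects. Without pointwise finite-dimensionality, I cannot appeal to Krull--Schmidt or to an explicit block classification as in the bipersistence case, so the argument must proceed by direct computation with the cubical formulas, using the structure of $\Fun(P, \Vecs_{\Fb})$ as modules over a finite-dimensional incidence algebra to characterise when a cofiber is actually injective. A possible simplification is to induct on the dimension of $P$, using the bipersistence theorem \autoref{thm:main_theorem} as a base case, though I expect a direct cubical computation exploiting the explicit shape of projective and injective functors over a finite product of finite total orders to be the cleaner approach.
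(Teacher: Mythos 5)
Your overall strategy---using the degree~1 and codegree~1 approximations to peel off an injective summand $K$ and a projective summand $C$---matches the paper's, and your identification of the short exact sequences $0 \to T_1 F \to F \to C \to 0$ and $0 \to K \to F \to T^1 F \to 0$ as the starting point is correct. However, there are two genuine gaps.

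First, you flag the identification of $C$ as projective and $K$ as injective as the hard step, but you only gesture at ``direct cubical computation'' without a concrete mechanism. The paper's mechanism (\autoref{lemma:middle_exact_cofibration}) is specific and nontrivial: it uses the Reedy (equivalently projective) model structure on $\Fun(P, \Vecs_{\Fb})$, identifies the latching map at $x$ with the map $\coker\bigl((K_\X)_2 \to (K_\X)_1\bigr) \to (K_\X)_0$ coming from the Koszul complex of the strongly bicartesian cube of parents of $x$, and then observes that $H_1(K_\X)\cong 0$ (by $\jdim(x)$-middle-exactness) is precisely injectivity of that latching map, so $\varepsilon_1\colon T_1 F\to F$ is a cofibration and $C$ is cofibrant, i.e.\ projective. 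Without this Reedy/Koszul translation the argument doesn't go through.

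Second, and more seriously, your merging step is incorrect. You propose to choose $C$ and $K$ compatibly inside $F$ by invoking the claim that $\Fun(P, \Vecs_{\Fb})$ has no nonzero maps between ``generic'' projective and injective indecomposables. This is false: over a finite product of finite total orders, the constant functor $\Fb_P$ is both projective and injective, and more generally $\Hom(\Fb_{a^{\uparrow}}, \Fb_{b^{\downarrow}}) \neq 0$ whenever $a \le b$. So the claimed orthogonality fails, and the ``complement $B$'' you define is not automatically a summand of both $T_1 F$ and $T^1 F$. The paper avoids this by a cleaner algebraic route: apply the additive functor $T_1$ to the split exact sequence $0 \to K \to F \to T^1 F \to 0$ to get $T_1 F \cong T_1 K \oplus T_1 T^1 F$; use \autoref{lemma:injective_is_codeg1} to identify $T_1 K \cong K$; then set $B = T_1 T^1 F$ and invoke \autoref{prop:T1_preserves_deg_1} (that $T_1$ sends degree~1 functors to bidegree~1 functors when $P_{\le 1}$ is down-closed), which is exactly the nontrivial ingredient your proposal is missing.
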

This result applies to all functors from $P$ to $\Vecs_{\Fb}$, even those that are not p.f.d. 
On the other hand, the techniques used in \cite{botnanMiddleExactness} and \cite{lebovici2024localcharacterizationblockdecomposabilitymultiparameter}
rely on the fact that p.f.d. multipersistence modules are uniquely decomposable into indecomposable modules.\footnote{These indecomposable modules need not, however, be interval modules.}
Hence, poset cocalculus offer a novel approach to study multipersistence modules that allows us to understand the structure of modules that are not p.f.d.

To aid our understanding of higher-dimensional multipersistence modules, we also introduce the notion of \emph{layers} and \emph{colayers}.
The $n$th \emph{layer} is defined as $D^n F = \fib(T^n F \to T^{n-1} F)$, and the $n$th \emph{colayer} is the dual concept.
We prove the following proposition.
\begin{proposition*}[\autoref{prop:layers}]
    Let $P$ be a finite product of total orders with minimal elements, $\C$ a complete pointed category, and $F \colon P \to \C$ any functor. Then $D_n F$ is bidegree $n$.

    Dually, let $P$ be a finite product of total orders with maximal elements, $\C$ a cocomplete pointed category, and $F \colon P \to \C$ any functor. Then $D^n F$ is bidegree $n$.
\end{proposition*}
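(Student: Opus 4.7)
The plan is to handle both statements via the formal duality between poset calculus and cocalculus: applying the first statement to $F^{\op} \colon P^{\op} \to \C^{\op}$ (which swaps fibers with cofibers, limits with colimits, and minimal with maximal elements) yields the second, so it suffices to prove the first.

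I would split the proof into two independent parts. The degree $n$ half is formal: $T^n F$ is degree $n$ by construction, and $T^{n-1} F$ is degree $n-1$. Using that degree $k$ implies degree $n$ for any $n \ge k$ (a standard feature of such calculi, established in \cite{hem2025posetfunctorcocalculusapplications}), $T^{n-1} F$ is also degree $n$. Since degree $n$ is characterized by a limit-preservation condition (taking strongly bicartesian $(n+1)$-cubes to homotopy cartesian ones), the class of degree $n$ functors is closed under homotopy limits, and in particular under fibers. Hence $D_n F$, as a fiber of two degree $n$ functors, is itself degree $n$.

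The codegree $n$ half is the main content, and my strategy has two steps. First, I would show that the lower approximation vanishes: $T^{n-1}(D_n F) \simeq *$. This follows by applying $T^{n-1}$ to the defining fiber sequence $D_n F \to T^n F \to T^{n-1} F$, using that $T^{n-1}$ preserves fibers (as a right-adjoint-type construction) and that the canonical map $T^n F \to T^{n-1} F$ is already the $(n-1)$-degree approximation of $T^n F$, so the induced map $T^{n-1}(T^n F) \to T^{n-1}(T^{n-1} F)$ is an equivalence. Second, I would invoke the explicit formula for $T^n$ from \cite[Definition 4.21]{hem2025posetfunctorcocalculusapplications} to verify the codegree condition directly: given a strongly bicartesian $(n+1)$-cube $X$ in $P$, the degree $n$ property renders $D_n F(X)$ cartesian, and the vanishing of the lower approximation then forces the dual colimit of the punctured cube to recover the top vertex, yielding cocartesianness.

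The main obstacle is this last step. In a general pointed (non-stable) category, cartesianness of a cube does not imply cocartesianness, so the codegree condition is not automatic from the degree $n$ closure argument. The proof must carefully combine the combinatorial structure of strongly bicartesian cubes in a product of total orders with the explicit limit formula for $T^n$, using completeness of $\C$ to control the limits. This is where the proposition acquires its substance: it asserts that layers encode information at \emph{exactly} scale $n$, not merely at scale at most $n$, and this finer statement requires genuinely exploiting the poset structure beyond what generic limit-closure arguments provide.
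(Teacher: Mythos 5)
Two issues, one notational and one substantive.

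\textbf{Definitional confusion.} You treat $D_n F$ as a \emph{fiber} of \emph{degree} approximations, writing that ``$D_n F$, as a fiber of two degree $n$ functors, is itself degree $n$.'' But in the paper, $D_n F$ is defined as the \emph{cofiber} $\cofib(T_{n-1}F \to T_n F)$ of \emph{codegree} approximations; the fiber construction $\fib(T^n F \to T^{n-1}F)$ is $D^n F$. Consequently the roles of ``easy'' and ``hard'' are swapped from what you describe: for $D_n F$, codegree $n$ is the formal half (it is a cokernel of codegree $n$ functors, and $T_n$ commutes with colimits, so codegree $n$ is preserved by cofibers), while \emph{degree} $n$ is the nontrivial half. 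Your argument as written would apply to $D^n F$ (fiber of degree approximations is degree $n$, codegree $n$ is the hard part), but you claim to be proving the statement for $D_n F$.

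\textbf{The substantive gap.} Once the notation is straightened out, your high-level plan does match the paper's: one half is immediate from closure of (co)degree $n$ functors under (co)fibers, and the other half requires more work, starting from the vanishing of $D_n F$ on $P_{\le n-1}$. You correctly identify the vanishing step and correctly note that nothing ``automatic'' in a non-stable setting will convert cartesianness to cocartesianness (or vice versa). But then you leave the crucial step unstated, only gesturing at ``carefully combining the combinatorial structure.'' That missing step is precisely the content of the paper's proof: writing $P = S_0 \times \dots \times S_k$, introducing the coordinate-restriction maps $\lambda_S \colon P \to P$, and using the explicit colimit formula for codegree $n$ functors together with the vanishing on $P_{\le n-1}$ to obtain the decomposition
\begin{equation*}
  D_n F \ \cong \ \bigoplus_{S \subseteq [k],\ |S| = n} (D_n F) \circ \lambda_S.
\end{equation*}
Each summand $(D_n F) \circ \lambda_S$ factors through the projection onto a product of $n$ total orders, and is therefore degree $n$ by \cite[Proposition 3.15]{hem2025posetfunctorcocalculusapplications}; additivity of $T^n$ then finishes the argument. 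This direct-sum splitting, indexed by the $n$-element subsets of coordinates, is the essential device, and without it the proof does not close. You honestly flag the gap, but the proposal as it stands does not prove the proposition.
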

We apply this result to prove that bidegree 1 p.f.d. multipersistence modules are interval decomposable.
\begin{proposition*}[\autoref{prop:bideg1_interval_decomp}]
    Let $P$ be a finite product of finite total orders, $\Fb$ a field, and $F \colon P \to \Vecs_{\Fb}$ a p.f.d. bidegree 1 functor. Then $F$ is interval decomposable.
\end{proposition*}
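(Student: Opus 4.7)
The plan is to reduce the proposition to the interval decomposition result of \cite{lebovici2024localcharacterizationblockdecomposabilitymultiparameter}, which asserts that any p.f.d. multipersistence module that is $k$-middle-exact for every $k \ge 2$ is interval decomposable. Since $F$ is p.f.d. by hypothesis, it will suffice to show that bidegree 1 implies $k$-middle-exactness for all $k \ge 2$.

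First I would unpack what bidegree 1 means for a functor with values in $\Vecs_{\Fb}$. Being codegree 1 says that $F$ sends every strongly bicartesian 2-cube in $P$ to a pushout square in $\Vecs_{\Fb}$, while being degree 1 says that the same square goes to a pullback. A square of vector spaces which is simultaneously a pushout and a pullback is precisely a short exact sequence on its three-term total complex, so bidegree 1 is equivalent to saying that $F$ sends every strongly bicartesian 2-cube in $P$ to a short exact sequence.

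The main step is to upgrade this short exactness on 2-cubes to full acyclicity of the total complex on every strongly bicartesian $k$-cube, for each $k \ge 2$; this a fortiori yields $k$-middle-exactness. I would argue by induction on $k$. A strongly bicartesian $k$-cube in a product of total orders splits along any one of its $k$ coordinate directions into two parallel strongly bicartesian $(k-1)$-subcubes, say $C_0$ and $C_1$, and the total complex of the full $k$-cube identifies (up to sign conventions) with the mapping cone of the induced chain map between the total complexes of $C_0$ and $C_1$. By the inductive hypothesis both $(k-1)$-cube total complexes are acyclic, so their mapping cone is acyclic as well, closing the induction whose base case $k=2$ is the short exactness established above.

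The main obstacle is the combinatorial bookkeeping in the inductive step: one must verify that the chosen face splitting of a strongly bicartesian $k$-cube really produces two strongly bicartesian $(k-1)$-subcubes rather than merely bicartesian ones, and that the total chain complex of the larger cube genuinely identifies with the mapping cone of the induced chain map between the two smaller total complexes. Once this verification is in place, $F$ is $k$-middle-exact for every $k \ge 2$, and interval decomposability follows immediately from the cited theorem.
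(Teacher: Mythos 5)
Your argument is correct, but it takes a genuinely different route from the paper. You show that bidegree~1 implies $k$-middle-exactness for all $k\ge 2$ --- indeed something stronger, namely full acyclicity of the Koszul complex of $F$ on every strongly bicartesian $k$-cube --- by induction on $k$, using the base case that a square of vector spaces that is both a pushout and a pullback has short exact total complex, the fact that the two opposite $(k-1)$-faces of a strongly bicartesian $k$-cube are themselves strongly bicartesian (their $2$-faces are $2$-faces of the ambient cube), and the standard identification of the Koszul complex of a $k$-cube with the mapping cone of the induced map between the Koszul complexes of those two faces; you then invoke the theorem of Lebovici et al.\ that p.f.d.\ modules that are $k$-middle-exact for all $k$ are interval decomposable. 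The paper argues internally instead: since $F$ is codegree~1, $F\cong T_1F$, so with $K=\Ima\bigl(F(0)\to F\bigr)$ one has a short exact sequence $0\to K\to F\to D_1F\to 0$; \autoref{prop:layers} shows $D_1F$ is degree~1, hence $K$ is degree~1 and epimorphic, hence injective by \autoref{lemma:deg1_epi_is_inj}, so the sequence splits and $K$ is interval decomposable by \autoref{lemma:injective_structure}; and \autoref{prop:layers} further splits $D_1F$ into a direct sum of one-parameter restrictions, each interval decomposable by the classical structure theorem. The difference in spirit matters here: \autoref{prop:bideg1_interval_decomp} is combined with \autoref{thm:middle_exactnes_general} in the final corollary precisely to give an \emph{alternative} proof of the Lebovici et al.\ theorem, so if the proposition itself were proved by citing that theorem, as in your approach, the corollary would become circular. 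Your reduction is a clean observation in its own right, but it cannot substitute for the paper's self-contained argument without defeating the purpose of the section.
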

Together with \autoref{thm:middle_exactnes_general}, this gives an alternative proof of the fact that p.f.d. multipersistence modules that are $k$-middle-exact for every $k \ge 2$ are interval decomposable, as long as the source poset is finite.


\subsection{Notation}

We denote by $\Vecs_{\Fb}$ the category of vector spaces over a field $\Fb$. We denote by $\Ch_{\Fb}$ the model category of unbounded chain complexes over $\Vecs_{\Fb}$, with the canonical model structure \cite[Theorem 2.3.13]{hovey}.

For two posets $P$ and $Q$, the product partial order on $P \times Q$ is given by \begin{equation*}
    (p,q) \le (p',q') \iff p \le q \textrm{ and } p' \le q'.
\end{equation*}
Unless otherwise specified, we endow products of posets with this partial order.
We let $[0, \infty]$ denote the poset of nonnegative numbers, with a maximal element $\infty$ added. We let $[0, \infty]^2$ denote the poset $[0, \infty] \times [0, \infty]$ with the product partial order.

Note that by \cite{Hess_2017}, both the projective and injective model structures on $\Fun(P, \M)$ exist whenever $\M$ is equipped with an accessible model structure and $P$ is a small category. In particular, they exist for $\M = \Ch_{\Fb}$, equipped with the canonical model structure.

Given categories $\A, \B$ and $\C$, and functors $\alpha \colon \A \to \B$ and $F \colon \A \to \C$, we let $\Lan_{\alpha} F \colon \B \to \C$ denote the left Kan extension of $F$ along $\alpha$, whenever it exists. Similarly, we let $\Ran_\alpha F \colon \B \to \C$ denote the right Kan extension of $F$ along $\alpha$, whenever it exists.

\subsection*{Acknowledgments}

I would like to thank my supervisor, Kathryn Hess, whose continued support and advice has been of great value to this work.

\section{Preliminaries on poset cocalculus}

\subsection{Posets and lattices}

Given two elements $x, y$ of a poset, we denote their \emph{least upper bound}, or \emph{join}, by $x \vee y$ and their \emph{greatest lower bound}, or \emph{meet}, by $x \wedge y$ (whenever they exist). A \emph{lattice} is a poset in which $x \vee y$ and $x \wedge y$ exist for all pairs of elements $(x, y)$.

An element $x$ in a poset $P$ is \emph{minimal} if there is no $y \in P$ such that $y < x$. If $P$ is a lattice, then $x \in P$ is minimal if and only if $x$ is a \emph{least element}, i.e., $x \le y$ for all $y \in P$ \cite[Section 4.1]{hem2025posetfunctorcocalculusapplications}.

We similarly say that an element $x$ in a poset $P$ is \emph{maximal} if there is no $y \in P$ such that $y > x$. If $P$ is a lattice, then $x \in P$ is maximal if and only if $x$ is a \emph{greatest element}, i.e., $x \ge y$ for all $y \in P$.

\begin{definition}\label{def:descending_chain}
    A poset $P$ satisfies the \emph{descending chain condition} if every nonempty subset of $P$ has a minimal element.
    
    Dually, a poset $P$ satisfies the \emph{ascending chain condition} if every nonempty subset of $P$ has a maximal element.
\end{definition}

Finite posets trivially satisfy both the descending chain condition and the ascending chain condition.

\subsection{Distributive lattices}

A lattice $P$ is \emph{distributive} if for all elements $x,y,z \in P$,
\begin{equation}\label{eq:dist_lattice}
    x \wedge (y \vee z) = (x \wedge y) \vee (x \wedge z).
\end{equation}
Equivalently, a lattice $P$ is distributive if for all elements $x,y,z \in P$,
\begin{equation}\label{eq:dist_lattice_alt}
    x \vee (y \wedge z) = (x \vee y) \wedge (x \vee z)
\end{equation}
\cite[Chapter IX, Theorem 1]{Birkhoff}.

\begin{definition}
We say that an element $v$ in a lattice $P$ is \emph{join-irreducible} if $v \neq x \vee y$ for all $x,y < v$.
\end{definition}
\begin{definition}
    Let $P$ be a lattice, and let $v \in P$. A \emph{join-decomposition} of $v$ (of size $k$) is a finite collection of elements $p^0, \dots, p^{k-1} \in P$ such that $v = p^0 \vee \dots \vee p^{k-1}$.
\end{definition}
\begin{definition}
We say that a join-decomposition $x^1, \dots, x^k$ is \emph{indecomposable} if it consists only of join-irreducible elements.
\end{definition}
\begin{definition}
    Let $p^0, \dots, p^{k-1}$ be a join-decomposition of $v$. We say that the join-decomposition is \emph{reduced} if none of the $p^i$'s is redundant, i.e., if for all $i$,
    \begin{equation*}
        \bigvee_{j \in [k-1] \setminus \{i\}} p_j \neq v.
    \end{equation*}
    We adopt the convention that $p^0 = v$ is a reduced join-decomposition of $v$, unless $v$ is minimal.
\end{definition}
\begin{lemma}{\cite[Lemma 1, p.\ 142]{Birkhoff}}
    Let $P$ be a distributive lattice. If $v \in P$ has an indecomposable reduced join-decomposition, it is unique.
\end{lemma}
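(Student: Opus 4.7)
The plan is to show that any two indecomposable reduced join-decompositions of $v$ must coincide as multisets, by the standard distributive-lattice argument. Suppose $v$ admits two such decompositions,
\begin{equation*}
    v = x^0 \vee \dots \vee x^{k-1} = y^0 \vee \dots \vee y^{m-1},
\end{equation*}
with each $x^i$ and each $y^j$ join-irreducible. First I would extend the binary distributivity law \eqref{eq:dist_lattice} by an easy induction on the number of factors to obtain, for each $i$,
\begin{equation*}
    x^i = x^i \wedge v = x^i \wedge \bigvee_{j=0}^{m-1} y^j = \bigvee_{j=0}^{m-1} (x^i \wedge y^j).
\end{equation*}
Each term $x^i \wedge y^j$ satisfies $x^i \wedge y^j \le x^i$, so the display above exhibits $x^i$ as a finite join of elements that are $\le x^i$.

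Next I would invoke join-irreducibility of $x^i$, in the extended form: if $x^i$ is written as a finite join $\bigvee_j z_j$ with each $z_j \le x^i$, then some $z_j$ must equal $x^i$ (otherwise all $z_j < x^i$ and an induction on the number of summands reduces to a binary violation of the definition of join-irreducible). Applying this to the previous display, I obtain some index $j(i)$ with $x^i \wedge y^{j(i)} = x^i$, i.e.\ $x^i \le y^{j(i)}$. By the symmetric argument applied to $y^{j(i)}$ with the first decomposition, there is some $l$ with $y^{j(i)} \le x^l$, and therefore $x^i \le x^l$.

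Now I would use reducedness of the first decomposition to conclude $i = l$: if $i \neq l$, then $x^i \le x^l$ would make $x^i$ redundant, since $\bigvee_{j \neq i} x^j$ would already absorb $x^i$ via $x^l$, contradicting the assumption that the decomposition is reduced. Hence $x^i = y^{j(i)} = x^l$. The assignment $i \mapsto j(i)$ is thus a well-defined map from $\{0, \dots, k-1\}$ to $\{0, \dots, m-1\}$ realizing $x^i = y^{j(i)}$, and it must be injective (again by reducedness, which forbids repetitions among the $x^i$). Swapping the roles of the two decompositions gives a map in the opposite direction, so both are bijections and the two decompositions agree up to reindexing. The main delicate point is the extension of join-irreducibility from the binary definition to arbitrary finite joins, but this is a routine induction, as is the extension of distributivity used at the start.
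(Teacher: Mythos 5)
The paper does not prove this lemma; it is cited directly from Birkhoff, so there is no in-paper proof to compare against. Your argument is the standard textbook proof and it is correct: expand $x^i = \bigvee_j (x^i \wedge y^j)$ by distributivity, use the (induction-extended) join-irreducibility of $x^i$ to pin down some $y^{j(i)} \ge x^i$, run the symmetric argument and invoke reducedness to force $x^i = y^{j(i)}$, and then use reducedness again (no repeats allowed) to see that $i \mapsto j(i)$ and its counterpart are injections between finite sets, hence bijections. The two auxiliary facts you flag as needing induction — $n$-ary distributivity and the extension of join-irreducibility from binary to finite joins — do go through routinely exactly as you sketch, and the reducedness hypothesis is used in precisely the right places (both to get $i = l$ and to rule out repetitions). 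This matches the argument in Birkhoff.
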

\begin{definition}\label{def:dim}
Let $P$ be a distributive lattice. For an element $v \in P$, we define the \emph{join-dimension} of $v$ as
\begin{equation*}
    \jdim (v) = 
    \begin{cases}
        0, &\ v \textrm{ is minimal}, \\
        k, &\ \text{$v$ is not minimal and has a reduced,}\\[-4pt]
        \ &\ \textrm{indecomposable join-decomposition of size $k$,} \\
        \infty, &\ \text{otherwise.} \\
    \end{cases}
\end{equation*}
\end{definition}

The following proposition says that in distributive lattices satisfying the descending chain condition, every element has finite join-dimension.
\begin{proposition}{\cite[Theorem 9, p.\ 142]{Birkhoff}}\label{thm:Birkhoff}
    If $P$ is a distributive lattice that satisfies the descending chain condition, then every element of $P$ has a unique indecomposable reduced join-decomposition.
\end{proposition}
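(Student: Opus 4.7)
The plan is to focus on existence, since uniqueness of indecomposable reduced join-decompositions (whenever they exist) is exactly the preceding lemma. For existence I would run a well-founded induction driven by the DCC: assume for contradiction that the set $S \subseteq P$ of elements lacking such a decomposition is nonempty; by the DCC, $S$ has a minimal element $v$, and it suffices to produce an indecomposable reduced join-decomposition of $v$ to derive a contradiction.

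Next I would case-split on $v$. If $v$ is minimal in $P$, then the empty join-decomposition of size $0$ is vacuously indecomposable and reduced, so $v \notin S$. If $v$ is non-minimal but join-irreducible, the convention of \autoref{def:dim} makes the singleton $\{v\}$ a reduced join-decomposition of $v$, and it is indecomposable since $v$ is join-irreducible; so $v \notin S$. The remaining case is that $v$ is not join-irreducible, so by definition $v = x \vee y$ for some $x, y < v$. By the minimality of $v$ in $S$, both $x$ and $y$ admit indecomposable reduced join-decompositions $x = x^0 \vee \cdots \vee x^{k-1}$ and $y = y^0 \vee \cdots \vee y^{m-1}$, and concatenating them yields a finite expression $v = z^0 \vee \cdots \vee z^{\ell-1}$ with every $z^i$ join-irreducible. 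I would then iteratively discard any $z^i$ whose removal leaves the join equal to $v$; because the list is finite and strictly shrinks at each step, this terminates in a reduced indecomposable join-decomposition of $v$, contradicting $v \in S$.

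The only subtle point is checking that the reduction step preserves indecomposability, which is immediate since we only delete elements from a list of join-irreducibles and never introduce new ones. What is perhaps worth emphasizing is that distributivity of $P$ plays no role in this existence argument --- it enters only through the uniqueness lemma --- so the existence of \emph{some} reduced indecomposable join-decomposition holds in any lattice satisfying the DCC, and distributivity becomes essential precisely when one asserts that this decomposition is unique.
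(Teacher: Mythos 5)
The paper cites this statement as Theorem 9 of Birkhoff's \emph{Lattice Theory} and gives no proof, so there is no argument in the paper to compare against; your reconstruction is correct and is the standard one. You delegate uniqueness to the preceding lemma and establish existence by DCC-induction: a minimal counterexample $v$ that is neither minimal in $P$ nor join-irreducible splits as $v = x \vee y$ with $x, y < v$, and concatenating then pruning the (inductively existing) indecomposable reduced decompositions of $x$ and $y$ yields such a decomposition of $v$; your closing remark that distributivity is invoked only for uniqueness, with existence holding in any lattice satisfying the DCC, is accurate.
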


\begin{definition}\label{def:join_fact_lattice}
    Let $P$ be a distributive lattice with a minimal element. We say that $P$ is a \emph{join-factorization lattice} if there exists a distributive lattice $Q$, satisfying the descending chain condition, and an order-preserving function $f \colon P \to Q$ such that for all $v \in P$,
    \begin{equation*}
        \jdim(f(v)) = \jdim(v).
    \end{equation*}
\end{definition}
Examples of join-factorization lattices include distributive lattices satisfying the descending chain condition (and, in particular, finite distributive lattices) \cite[Example 4.17]{hem2025posetfunctorcocalculusapplications}.
Furthermore, if $T_1, \dots, T_n$ are total orders with minimal elements, then $P = T_1 \times \dots \times T_n$ is a join-factorization lattice \cite[Example 4.18]{hem2025posetfunctorcocalculusapplications}.

Replacing joins with meets, we dualize the constructions above. In particular, the \emph{meet-dimension} of an element $v$, denoted $\mdim(v)$, is the size of its reduced indecomposable meet-decomposition, if it exists, and is $\infty$ otherwise.

Given a distributive lattice $P$, we let $P_{\le n}$ denote the subposet of elements with join-dimension $\le n$ and $P^{\le n}$ the subposet of elements with meet-dimension $\le n$. In particular, $P_{\le 1}$ consists of the join-irreducible elements and $P^{\le 1}$ consists of the meet-irreducible elements.

\subsection{Cubical diagrams}

Let $[k] = \{0, \dots, k\}$, and let $\Po_{k+1}$ be the power set of $[k]$, viewed as a poset ordered by inclusion. 
For $\C$ any category, we refer to functors $\X \colon \Po_k \to \C$ as \emph{$k$-cubes} in $\C$. 

\begin{definition}
    Let $\C$ be a category.
    A $(k+1)$-cube $\X \colon \Po_{k+1} \to \C$ is \emph{cocartesian} if the canonical map
    \begin{equation*}
        \underset{S \subsetneq [k]}{\colim \X(S)} \to \X([k])
    \end{equation*}
    is an isomorphism.

    Similarly, $\X$ is \emph{cartesian} if the canonical map
    \begin{equation*}
        \X(\emptyset) \to \underset{S \subseteq [k], S \neq \emptyset}{\lim \X(S)}
    \end{equation*}
    is an isomorphism.
\end{definition}

\begin{definition}
    Let $\C$ be a category.
    A $(k+1)$-cube $\X \colon \Po_{k+1} \to \C$ is \emph{strongly cartesian} (resp. \emph{strongly cocartesian}) if each face of dimension at least is cartesian (resp. cocartesian).

    If $\X$ is both strongly cartesian and strongly cocartesian, it is called \emph{strongly bicartesian}.
\end{definition}

\begin{definition}
    Let $\M$ be a model category.
    A $(k+1)$-cube $\X \colon \Po_{k+1} \to \M$ is \emph{homotopy cocartesian} if the canonical map
    \begin{equation}\label{eq:hocolim_XS}
        \underset{S \subsetneq [k]}{\hocolim \X(S)} \to \X([k])
    \end{equation}
    is a weak equivalence.

    Similarly, $\X$ is \emph{homotopy cartesian} if the canonical map
    \begin{equation*}
        \X(\emptyset) \to \underset{S \subseteq [k], S \neq \emptyset}{\holim \X(S)}
    \end{equation*}
    is a weak equivalence.
\end{definition}

We now give a characterization of strongly bicartesian cubes in a distributive lattice. Note that for any finite diagram $\alpha \colon I \to P$ into a lattice, $\colim_I \alpha$ is equal to the join $\bigvee_{i \in I}\alpha(i)$, and dually, $\lim_I {\alpha} = \bigwedge_{i \in I}\alpha(i)$.
\begin{definition}
    Let $P$ be a lattice, let $v \in P$ and let $k$ be a positive integer.
    A \emph{pairwise cover} of $v$ of size $k$ is a collection of elements $x^0, \dots, x^{k-1} \in P$, with $x^i \le v$ for all $i$, such that $x^i \vee x^j = v$ for all $i \neq j$.
\end{definition}
\begin{definition}\label{def:poset_cube}
    Let $P$ be a lattice, and let $v \in P$. Let further $x^0, \dots, x^k$ be a pairwise cover of $v$.
    We define the $(k+1)$-cube 
    \begin{equation*}
        \X_{x^0, \dots, x^k} \colon \Po_{k+1} \to P
    \end{equation*}
    as follows.
    \begin{equation*}
        \X_{x^0, \dots, x^k} (S) = 
        \begin{cases}
            v, &\quad S = [k], \\ 
            \bigwedge_{i \notin S} x^i, &\quad \text{ otherwise.} \\
        \end{cases}
    \end{equation*}
\end{definition}
The following are Lemma 3.6 and Lemma 3.7 in \cite{hem2025posetfunctorcocalculusapplications}.
\begin{lemma}\label{lem:bicartesian_from_codecomp}
    Let $P$ be a distributive lattice. For every $v \in P$ and every pairwise cover $x^0, \dots, x^k$ of $v$, the cube $\X_{x^0, \dots, x^k}$ is strongly bicartesian.
\end{lemma}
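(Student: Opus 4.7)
My plan is to reduce the verification of strong bicartesianness to a direct calculation on each 2-dimensional face. In a distributive lattice viewed as a (thin) category, finite (co)limits coincide with joins/meets, so a 2-face $\bigl(\X(T),\X(T\cup\{i\}),\X(T\cup\{j\}),\X(T\cup\{i,j\})\bigr)$ is cocartesian iff $\X(T\cup\{i,j\}) = \X(T\cup\{i\}) \vee \X(T\cup\{j\})$ and cartesian iff $\X(T) = \X(T\cup\{i\}) \wedge \X(T\cup\{j\})$. Once these identities are established for all 2-faces, the analogous statement for an arbitrary sub-cube $[S_0, S_1]$ of dimension $\geq 2$ follows by monotonicity: for any $i, j \in S_1 \setminus S_0$, the 2-face at $(S_1 \setminus \{i,j\}, S_1)$ gives
\begin{equation*}
    \X(S_1) = \X(S_1 \setminus \{i\}) \vee \X(S_1 \setminus \{j\}) \leq \bigvee_{r \in S_1 \setminus S_0} \X(S_1 \setminus \{r\}) \leq \X(S_1),
\end{equation*}
so equality holds; the dual argument handles cartesian sub-cubes.

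So I would fix two distinct $i, j \in [k]$ and $T \subseteq [k] \setminus \{i,j\}$, then split into two cases depending on whether $T \cup \{i,j\} = [k]$. In the \emph{generic case} $T \cup \{i,j\} \ne [k]$, let $Y = \bigwedge_{r \in [k] \setminus (T \cup \{i,j\})} x^r$; then the four vertices of the face become $x^i \wedge x^j \wedge Y$, $x^j \wedge Y$, $x^i \wedge Y$, and $Y$. Cartesianness is immediate from associativity of meet. Cocartesianness is the computation
\begin{equation*}
    (x^j \wedge Y) \vee (x^i \wedge Y) = Y \wedge (x^i \vee x^j) = Y \wedge v = Y,
\end{equation*}
where the first equality uses distributivity, the second uses the pairwise cover hypothesis $x^i \vee x^j = v$, and the third uses $Y \le v$ (valid since the meet defining $Y$ is non-empty and each $x^r \le v$).

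In the \emph{boundary case} $T = [k] \setminus \{i,j\}$, the four vertices reduce to $x^i \wedge x^j$, $x^j$, $x^i$, $v$; cartesianness is trivial and cocartesianness is precisely the pairwise cover condition $x^i \vee x^j = v$.

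The main obstacle, to the extent that there is one, is the cocartesian check in the generic case: this is the sole place where the distributivity hypothesis on $P$ is actually used, so any attempt to relax $P$ to a general lattice would fail here. The only other point requiring care is cleanly separating the boundary case from the generic one and checking that the inductive reduction to 2-faces applies uniformly to sub-cubes of every dimension $\geq 2$; both are routine.
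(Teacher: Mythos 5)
Your proof is correct and complete. Note that the present paper does not prove this lemma itself---it cites it as Lemma~3.6 of the companion work on poset cocalculus---so there is no internal argument to compare against. Still, the route you take is the natural one: the reduction of strong bicartesianness to 2-dimensional faces by monotonicity (exploiting that a finite colimit in a lattice is a join, so the sandwich $\X(S_1) = \X(S_1\setminus\{i\}) \vee \X(S_1\setminus\{j\}) \le \bigvee_r \X(S_1\setminus\{r\}) \le \X(S_1)$ closes), the case split on whether the top vertex is $v$ or a nonempty meet $Y$, and the single distributivity step $(x^j\wedge Y)\vee(x^i\wedge Y) = Y\wedge(x^i\vee x^j) = Y\wedge v = Y$. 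You correctly isolate distributivity as the only nontrivial hypothesis, and you handle the edge case $T\cup\{i,j\}=[k]$ (where $\X$ is defined separately to be $v$ rather than an empty meet) cleanly. The only minor gloss is that cartesianness of the generic 2-face uses idempotence of $\wedge$ (so $Y\wedge Y = Y$) in addition to associativity, but this is routine.
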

\begin{lemma}\label{lem:codecomp_from_bicartesian}
    Let $P$ be a lattice, and let $\X \colon \Po_{k+1} \to P$ be a strongly bicartesian cube. For $i \in [k]$,\footnote{Recall that $[k] = \{0, \dots, k\}.$} let $x^i = \X\big([k] \setminus \{i\}\big)$. Then $x^0, \dots, x^k$ is a pairwise cover of $\X\big([k]\big)$.

    Furthermore, $\X = \X_{x^0, \dots, x^k}$, as defined in \autoref{def:poset_cube}.
\end{lemma}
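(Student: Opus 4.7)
The plan is to establish both claims by unpacking the strongly bicartesian hypothesis on appropriately chosen faces of $\X$, and then performing a downward induction on $|S|$.

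For the pairwise-cover claim, I would fix $i \neq j$ in $[k]$ and consider the $2$-dimensional face of $\X$ indexed by $\{T : [k] \setminus \{i,j\} \subseteq T \subseteq [k]\}$. By the strongly bicartesian hypothesis, this square is cocartesian in $P$; but in a poset, a cocartesian square is precisely a join square (the pushout of $a \leftarrow c \to b$ is $a \vee b$, independent of $c$). Therefore
\[ \X([k]) = \X([k] \setminus \{i\}) \vee \X([k] \setminus \{j\}) = x^i \vee x^j, \]
proving that $x^0, \dots, x^k$ is a pairwise cover of $\X([k])$.

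To prove $\X = \X_{x^0, \dots, x^k}$, it suffices to verify $\X(S) = \bigwedge_{i \notin S} x^i$ for every $S \subsetneq [k]$, which I would do by induction on $|[k] \setminus S|$. The base case $|[k] \setminus S| = 1$ reduces to $S = [k] \setminus \{i\}$, where $\X(S) = x^i$ by definition of $x^i$. For the inductive step, when $|[k] \setminus S| \ge 2$, the face $\{T : S \subseteq T \subseteq [k]\}$ has dimension at least $2$ and is therefore cartesian. Translating this into the poset language gives
\[ \X(S) = \bigwedge_{S \subsetneq T \subseteq [k]} \X(T). \]
Since $\X$ is order-preserving, and since every $T$ with $S \subsetneq T \subseteq [k]$ contains some $S \cup \{i\}$ with $i \notin S$, the meet collapses onto its minimal terms and becomes $\X(S) = \bigwedge_{i \notin S} \X(S \cup \{i\})$. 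The inductive hypothesis then gives $\X(S \cup \{i\}) = \bigwedge_{j \notin S \cup \{i\}} x^j$. Substituting and observing that the assumption $|[k] \setminus S| \ge 2$ guarantees every $j \notin S$ occurs in the double meet (paired with some $i \in [k] \setminus S \setminus \{j\}$), the double meet simplifies to $\bigwedge_{j \notin S} x^j$, completing the induction.

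The only nontrivial point is the collapse of the meet over all proper supersets of $S$ to the meet over just the minimal ones, which requires a careful argument using monotonicity of $\X$, but is otherwise routine; the rest of the argument is essentially a bookkeeping exercise in subset indexing.
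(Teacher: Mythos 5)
The paper does not supply a proof of this lemma; it is cited as Lemmas~3.6 and~3.7 of \cite{hem2025posetfunctorcocalculusapplications}, so there is no in-text argument to compare against. Your proof is nevertheless correct and is the argument one would expect: both claims reduce to the observation that in a poset-valued cube, cocartesian faces compute joins and cartesian faces compute meets, and the downward induction on $|[k]\setminus S|$ (collapsing the meet over all proper supersets of $S$ to the meet over the covers $S\cup\{i\}$ via monotonicity of $\X$) is exactly right. One small point worth making explicit: the definition of a pairwise cover of $v$ also requires $x^i\le v$ for each $i$; this is immediate from $[k]\setminus\{i\}\subseteq[k]$ and monotonicity of $\X$, but it is a condition you did not state.
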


\subsection{Poset cocalculus}

We recall here the main definitions and results of the \emph{poset cocalculus} introduced in \cite{hem2025posetfunctorcocalculusapplications}.

\begin{definition}
    Let $P$ be a distributive lattice, and $\C$ a finitely cocomplete category. A functor $F \colon P \to \C$ is \emph{codegree $n$} if it takes strongly bicartesian $(n+1)$-cubes to cocartesian $(n+1)$-cubes.
\end{definition}

Given a functor $F \colon P \to \C$, we denote by $T_n F$ its \emph{codegree $n$ approximation} (defined as in \cite[Definition 4.30]{hem2025posetfunctorcocalculusapplications}).
That is, $T_n F$ is the left Kan extension of the restriction $F|_{P_{\le n}}$ along the inclusion $i \colon P_{\le n} \hookrightarrow P$.
This can be computed objectwise as
\begin{equation}\label{eq:T_n_cat}
    T_n F (x) = \underset{v \in P_{\le n}, v \le x}{\colim} F(v).
\end{equation}
Observe that $T_n$ commutes with colimits in $\Fun(P, \C)$, as colimits commute with colimits.

The following are Theorem 4.31 and Theorem 4.32 in \cite{hem2025posetfunctorcocalculusapplications}.
\begin{theorem}\label{theoremA_cat}
    If $P$ is a distributive lattice, then for every functor $F \colon P \to \C$ to a cocomplete category, $T_k F$ is codegree $k$.
\end{theorem}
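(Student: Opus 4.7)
The plan is to reduce the theorem to a combinatorial lemma about pairwise covers in distributive lattices and then invoke a cofinality argument. By \autoref{lem:codecomp_from_bicartesian}, every strongly bicartesian $(k+1)$-cube $\X \colon \Po_{k+1} \to P$ equals $\X_{x^0, \dots, x^k}$ for some pairwise cover $x^0, \dots, x^k$ of $v \coloneqq \X([k])$, with $\X(S) = \bigwedge_{i \notin S} x^i$ for $S \subsetneq [k]$. Thus the goal reduces to showing that the canonical map $\colim_{S \subsetneq [k]} T_k F(\X(S)) \to T_k F(v)$ is an isomorphism.

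The heart of the argument is the following combinatorial lemma: \emph{if $w \in P_{\le k}$ and $w \le v$, then $w \le x^i$ for some $i \in [k]$.} To prove it, I would take the reduced indecomposable join-decomposition $w = u_1 \vee \dots \vee u_m$ (which exists by the hypothesis $\jdim(w) \le k$), so that $m \le k$ and each $u_l$ is join-irreducible. For any $i \neq j$, distributivity together with the pairwise-cover identity $x^i \vee x^j = v \ge u_l$ yields $u_l = (u_l \wedge x^i) \vee (u_l \wedge x^j)$, and join-irreducibility then forces $u_l \le x^i$ or $u_l \le x^j$. Consequently each set $B_l \coloneqq \{i \in [k] : u_l \not\le x^i\}$ has at most one element. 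Since $|[k]| = k+1 > k \ge m$, the union $\bigcup_l B_l$ is a proper subset of $[k]$, and any $i^*$ outside it satisfies $u_l \le x^{i^*}$ for every $l$, hence $w \le x^{i^*}$.

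For the second step, I would use the pointwise formula $T_k F(x) = \colim_{w \in (P_{\le k})_{\le x}} F(w)$ and Fubini to rewrite the source of the canonical map as a colimit over the Grothendieck construction $\mathcal{G}$ whose objects are pairs $(S, w)$ with $S \subsetneq [k]$ and $w \in (P_{\le k})_{\le \X(S)}$. It then suffices to show that the projection $\pi \colon \mathcal{G} \to (P_{\le k})_{\le v}$, $(S, w) \mapsto w$, is cofinal. Non-emptiness of each comma category $w \downarrow \pi$ is immediate from the key lemma (take $S = [k] \setminus \{i^*\}$, so $\X(S) = x^{i^*} \ge w$). For connectedness, the definition directly gives $\X(S_1) \wedge \X(S_2) = \X(S_1 \cap S_2)$, which lets me connect any two objects $(S_1, w'_1), (S_2, w'_2) \in w \downarrow \pi$ by the zigzag
\[
(S_1, w'_1) \leftarrow (S_1, w) \leftarrow (S_1 \cap S_2, w) \rightarrow (S_2, w) \rightarrow (S_2, w'_2).
\]
The main obstacle is the combinatorial lemma: it crucially relies on both the existence of the reduced indecomposable join-decomposition (guaranteed by $w \in P_{\le k}$) and on distributivity, and the bound $|B_l| \le 1$ is sharp enough to make the pigeonhole argument succeed only because the pairwise-cover property constrains each $u_l$ so strongly. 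Once the lemma is in hand, the cofinality step is routine.
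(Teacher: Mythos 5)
Your proof is correct and follows what is essentially the standard argument for this theorem (the paper cites the proof to Theorem 4.31 of the companion paper rather than reproducing it, but the cofinality-via-combinatorics strategy you use is the canonical one, and the auxiliary finality lemma cited later in the paper for \autoref{lemma:latching_ob_cube} is of the same flavor). The key combinatorial lemma is proved correctly: distributivity gives $u_l = (u_l \wedge x^i) \vee (u_l \wedge x^j)$, join-irreducibility of $u_l$ forces $u_l \le x^i$ or $u_l \le x^j$, so $|B_l| \le 1$, and the pigeonhole bound $m \le k < k+1 = |[k]|$ produces the desired $i^*$; the Fubini/Grothendieck reduction and the zigzag via $(S_1 \cap S_2, w)$ (using $\X(S_1) \wedge \X(S_2) = \X(S_1 \cap S_2)$, which holds since $\X(S) = \bigwedge_{i \notin S} x^i$ for $S \subsetneq [k]$) establish finality cleanly.
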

\begin{theorem}\label{theoremB_cat}
    Let $P$ be a join-factorization lattice, and let $F \colon P \to \C$ be a functor to a cocomplete category. If $F$ is codegree $n$, then $\varepsilon_n \colon T_n F \to F$ is a natural isomorphism.
\end{theorem}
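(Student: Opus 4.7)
The plan is to prove, by well-founded induction on $f(x) \in Q$, that $\varepsilon_n(x) \colon T_n F(x) \to F(x)$ is an isomorphism for every $x \in P$. Here $f \colon P \to Q$ is the order-preserving map provided by the join-factorization structure of $P$, and the descending chain condition on $Q$ is what makes this induction well-founded.

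For the base case, when $\jdim(x) \le n$, we have $x \in P_{\le n}$, so $x$ itself is the terminal object of the indexing diagram $\{v \in P_{\le n} : v \le x\}$ of the colimit defining $T_n F(x)$; hence $\varepsilon_n(x)$ is an isomorphism.

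For the inductive step, I suppose $\jdim(x) = k \ge n+1$ and that $\varepsilon_n(v)$ is an isomorphism for every $v$ with $f(v) < f(x)$. Since $\jdim(x) = k$ is finite, $x$ admits a reduced indecomposable join-decomposition $x = p^0 \vee \cdots \vee p^{k-1}$. Setting $y^i = \bigvee_{j \ne i} p^j$ for $i = 0, \dots, n$, the elements $y^0, \dots, y^n$ form a pairwise cover of $x$ of size $n+1$: each $y^i < x$ because the decomposition of $x$ is reduced, and $y^i \vee y^j = x$ whenever $i \ne j$. By \autoref{lem:bicartesian_from_codecomp}, the resulting $(n+1)$-cube $\X_{y^0, \dots, y^n}$ is strongly bicartesian. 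Since $F$ is codegree $n$ by assumption and $T_n F$ is codegree $n$ by \autoref{theoremA_cat}, both $F$ and $T_n F$ send $\X_{y^0, \dots, y^n}$ to cocartesian $(n+1)$-cubes in $\C$ with top vertices $F(x)$ and $T_n F(x)$, respectively. The conclusion that $\varepsilon_n(x)$ is an isomorphism then follows by comparing these two cocartesian cubes, provided $\varepsilon_n$ is already known to be an isomorphism on every non-top vertex.

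The main obstacle is thus to verify that $f(v) < f(x)$ for every non-top vertex $v = \bigwedge_{i \notin S} y^i$ (with $S \subsetneq [n]$), so that the inductive hypothesis applies. Picking any $i \notin S$ gives $v \le y^i$, hence $f(v) \le f(y^i)$. A short check shows that $\{p^j : j \ne i\}$ is itself a reduced indecomposable decomposition of $y^i$ of size $k-1$: removing any further $p^l$ would, after rejoining $p^i$, contradict the reducedness of the original decomposition of $x$. Consequently $\jdim(y^i) = k-1$; since $f$ preserves $\jdim$, we get $\jdim(f(y^i)) = k-1 \ne k = \jdim(f(x))$, and combined with $f(y^i) \le f(x)$ this forces $f(y^i) < f(x)$. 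Hence $f(v) \le f(y^i) < f(x)$, closing the induction.
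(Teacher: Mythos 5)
The paper itself does not give a proof of this theorem; it states it and cites Theorem~4.32 of \cite{hem2025posetfunctorcocalculusapplications}, so there is no in-paper argument against which to compare. Assessed on its own merits, your proof is correct and is the natural argument: well-founded induction along $f \colon P \to Q$ using the descending chain condition on $Q$, taking a reduced indecomposable join-decomposition $x = p^0 \vee \cdots \vee p^{k-1}$ (which exists since $\jdim(x) = k$), forming the pairwise cover $y^i = \bigvee_{j \neq i} p^j$ ($i = 0, \dots, n$), and comparing the two cocartesian cubes $T_n F \circ \X_{y^0,\dots,y^n}$ and $F \circ \X_{y^0,\dots,y^n}$ via \autoref{lem:bicartesian_from_codecomp} and \autoref{theoremA_cat}. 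Your verification that each $y^i$ has $\jdim(y^i) = k-1$, and hence that every non-top vertex $v$ of the cube satisfies $f(v) \le f(y^i) < f(x)$, is the crucial point and is handled correctly, including the observation that $\{p^j : j \neq i\}$ is again a reduced indecomposable decomposition. One small omission worth flagging: before writing $\jdim(x) = k \ge n+1$ you should note explicitly that $\jdim(x)$ is always finite, since $\jdim(x) = \jdim(f(x))$ and $Q$ satisfies the descending chain condition, so \autoref{thm:Birkhoff} applies to $f(x) \in Q$; this is what makes the case split into $\jdim(x) \le n$ and $\jdim(x) \ge n+1$ exhaustive. This is easily filled and does not affect the validity of the argument.
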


We give analogous definitions for functors to model categories.

\begin{definition}
    Let $P$ be a distributive lattice, and $\M$ a model category such that $\Fun(P, \M)$ admits the projective model structure. A functor $F \colon P \to \M$ is \emph{homotopy codegree $n$} if it takes strongly bicartesian $(n+1)$-cubes to homotopy cocartesian $(n+1)$-cubes.
\end{definition}

When working in the model category setting, we denote the analogue of the codegree $n$ approximation by $\widetilde T_n F$ and call it the \emph{homotopy codegree $n$ approximation} (defined as in \cite[Definition 4.21]{hem2025posetfunctorcocalculusapplications}).
That is, $\widetilde T_n F$ is the homotopy left Kan extension of the restriction $F|_{P_{\le n}}$ along the inclusion $i \colon P_{\le n} \hookrightarrow P$.
This can be computed objectwise as
\begin{equation}\label{eq:htpy_codeg_n_defn}
    \widetilde T_n F (x) = \underset{v \in P_{\le n}, v \le x}{\hocolim} F(v).
\end{equation}

The following are Theorem 4.26 and Theorem 4.27 in \cite{hem2025posetfunctorcocalculusapplications}.
\begin{theorem}\label{theoremA}
    If $P$ is a distributive lattice, then for every functor $F \colon P \to \M$ to a good model category, $\widetilde T_k F$ is homotopy codegree $k$.
\end{theorem}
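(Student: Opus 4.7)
The plan is to reduce to a single family of cubes via \autoref{lem:codecomp_from_bicartesian}, unfold both sides of the cocartesian comparison into homotopy colimits over poset indices, and conclude by establishing homotopy cofinality of a forgetful functor between Grothendieck constructions.

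By \autoref{lem:codecomp_from_bicartesian}, every strongly bicartesian $(k+1)$-cube in $P$ is of the form $\X_{x^0, \dots, x^k}$ for some pairwise cover $x^0, \dots, x^k$ of $v := \bigvee_i x^i$, so it suffices to verify that $\widetilde T_k F \circ \X_{x^0, \dots, x^k}$ is homotopy cocartesian. Applying \eqref{eq:htpy_codeg_n_defn} at each vertex and using a Fubini-type interchange of homotopy colimits (valid under the ``good model category'' hypothesis), the comparison map becomes the canonical map
\begin{equation*}
    \hocolim_{(S, w) \in G} F(w) \to \hocolim_{w' \in H} F(w'),
\end{equation*}
where $G = \{(S, w) : S \subsetneq [k], \; w \in P_{\le k}, \; w \le \X(S)\}$ is the Grothendieck construction indexing the punctured homotopy colimit, $H = \{w' \in P_{\le k} : w' \le v\}$, and the map is induced by the forgetful functor $\phi \colon G \to H$, $\phi(S, w) = w$.

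To show this map is a weak equivalence, I would prove that $\phi$ is homotopy cofinal. Since both $G$ and $H$ are posets, Quillen's Theorem A reduces this to showing that for each $w \in H$, the preimage $\phi^{-1}(H_{\ge w}) = \{(S, w') \in G : w' \ge w\}$ has contractible nerve. I claim this preimage has the minimum element $(B(w), w)$, where $B(w) := \{i \in [k] : w \not\le x^i\}$, provided $B(w) \subsetneq [k]$: for any $(S, w') \in \phi^{-1}(H_{\ge w})$, the condition $w' \le \X(S)$ together with $w \le w'$ forces $B(w) \subseteq S$, hence $(S, w') \ge (B(w), w)$ in $G$.

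The main obstacle is the combinatorial statement that $B(w) \subsetneq [k]$, equivalently that $w \le x^i$ for at least one $i$. The argument rests on the unique reduced indecomposable join-decomposition $w = u_1 \vee \cdots \vee u_m$ with $m = \jdim(w) \le k$, guaranteed for $w \in P_{\le k}$ by \autoref{def:dim}. Distributivity implies each join-irreducible $u_j$ is also join-prime, and the pairwise cover property $x^i \vee x^{i'} = v$ for all $i \neq i'$ then forces $|\{i : u_j \not\le x^i\}| \le 1$ for every $j$: if $u_j$ failed to lie under two distinct $x^i, x^{i'}$, then $u_j \le v = x^i \vee x^{i'}$ would contradict join-primality. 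Hence $\{i : w \not\le x^i\} = \bigcup_j \{i : u_j \not\le x^i\}$ has cardinality at most $m \le k < |[k]|$, producing the desired index $i$ and closing the argument.
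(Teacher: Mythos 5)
The paper only recalls this result from Theorem 4.26 of the cited reference without reproducing a proof, so a line-by-line comparison is not possible, but your argument is correct and is almost certainly the intended route: the referenced work does establish finality of the relevant subposet inclusions (its Lemma 4.24 is invoked elsewhere in this paper for exactly such a cofinality statement), and the combinatorial ingredient you use (join-irreducibles are join-prime in a distributive lattice, bounding the number of parents) is the content of its Lemma 4.20, also cited here. The reduction to cubes $\X_{x^0,\dots,x^k}$ via \autoref{lem:codecomp_from_bicartesian}, the Thomason-type interchange to pass to a Grothendieck construction, the identification of $h/\phi$ with $\phi^{-1}(H_{\ge h})$, and the exhibition of $(B(w),w)$ as a minimum element are all sound. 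The crux, $|B(w)| \le \jdim(w) \le k$, is correct and is precisely where distributivity is used, which is a good sign that the hypotheses are being consumed properly.

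Two small remarks. First, the Fubini/Thomason interchange is genuine machinery, not a formal triviality: it is the reason the statement requires $\M$ to be ``good'' (so that homotopy colimits over the various posets and Grothendieck constructions are computed compatibly via the projective model structure), and a fully spelled-out proof would need to cite or prove the relevant comparison. Second, for $k=0$ the identity $v = \bigvee_i x^i$ does not hold in general, since the pairwise-cover condition is vacuous for singleton covers and $\X(\{0\})$ need not equal $x^0 = \X(\emptyset)$; this does not affect the theorem because the $k=0$ case is degenerate ($P_{\le 0}$ is at most a singleton in a lattice, so $\widetilde T_0 F$ is essentially constant), but the phrasing ``$v := \bigvee_i x^i$'' should be replaced by ``$v := \X([k])$'' to cover that case correctly.
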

\begin{theorem}\label{theoremB}
    Let $P$ be a join-factorization lattice, and let $F \colon P \to \M$ be a functor to a good model category. If $F$ is homotopy codegree $n$, then $\varepsilon_n \colon \widetilde T_n F \to F$ is an objectwise weak equivalence.
\end{theorem}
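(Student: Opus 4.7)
The plan is to prove by induction on $v \in P$ that $\varepsilon_n(v) \colon \widetilde T_n F(v) \to F(v)$ is a weak equivalence. For the base case $v \in P_{\le n}$, the over-category $\{w \in P_{\le n} : w \le v\}$ indexing the homotopy colimit \eqref{eq:htpy_codeg_n_defn} has $v$ as a terminal object, so $\widetilde T_n F(v)$ is weakly equivalent to $F(v)$ via the canonical map $\varepsilon_n(v)$.

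For the inductive step at $v$ with $\jdim(v) = m > n$, I would invoke the join-factorization hypothesis together with \autoref{thm:Birkhoff} (via the map $f \colon P \to Q$) to extract the unique reduced indecomposable join-decomposition $p^0, \dots, p^{m-1}$ of $v$; by \autoref{lem:bicartesian_from_codecomp}, these elements produce a strongly bicartesian $m$-cube $\X = \X_{p^0, \dots, p^{m-1}}$. Since $\widetilde T_n F$ is homotopy codegree $n$ by \autoref{theoremA} and $F$ is so by hypothesis, and since (by a standard closure argument that I would need to cite or prove separately) homotopy codegree $n$ functors send strongly bicartesian $k$-cubes to homotopy cocartesian cubes for all $k \ge n+1$, both $F \circ \X$ and $\widetilde T_n F \circ \X$ are homotopy cocartesian. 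Granted weak equivalences at every non-top vertex of $\X$ by the inductive hypothesis, the induced map between homotopy pushouts then forces $\varepsilon_n(v)$ to be a weak equivalence.

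The main obstacle is organizing the induction so that the non-top vertices of $\X$ are handled before $v$. A direct induction on $\jdim$ fails, because a meet $\bigwedge_{i \in I} p^i$ of join-irreducibles can have the same join-dimension as $v$: for instance, in the distributive lattice of down-sets of the four-element poset with $a, b$ incomparable and lying below incomparable $c, d$, the element $\{a, b, c, d\}$ has $\jdim = 2$ and the bottom vertex of its associated cube is $\{a, b\}$, also of $\jdim = 2$. The join-factorization hypothesis is exactly what rescues the argument: the order-preserving map $f \colon P \to Q$ into a DCC lattice allows a transfinite induction on $f(v)$, supplemented by a secondary argument in the residual case $f(w) = f(v)$ for a non-top vertex $w$, in which one iterates the cube construction at $w$ (strictly below $v$ in $P$) until one reaches the base case.
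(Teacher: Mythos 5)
Your proposal correctly identifies the overall shape of the argument: the base case when $v \in P_{\le n}$ follows because $v$ is terminal in the indexing category $\{w \in P_{\le n} : w \le v\}$, and the inductive step should compare $F$ and $\widetilde T_n F$ on a strongly bicartesian cube with top vertex $v$, using that both are homotopy codegree $n$ (the latter by \autoref{theoremA}). You are also right that two auxiliary facts are required: a \emph{closure lemma} (homotopy codegree $n$ functors take strongly bicartesian $k$-cubes to homotopy cocartesian cubes for all $k \ge n+1$, not merely $k = n+1$), which is nontrivial and must be cited or proved; and a sound organization of the induction. Your counterexample with the down-set lattice of $\{a,b,c,d\}$ is correct and nicely pinpoints why a naive induction on $\jdim$ fails, since the bottom vertex $p^0 \wedge p^1 = \{a,b\}$ has $\jdim = 2 = \jdim(v)$.

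The gap is in the proposed resolution. A Noetherian induction on $f(v) \in Q$ is well-founded since $Q$ satisfies DCC, but it is only useful if $f$ strictly \emph{decreases} on the non-top vertices of the cube. There is no reason for this: $f$ is required only to be order-preserving, not a lattice homomorphism, and the hypothesis $\jdim(f(v)) = \jdim(v)$ does not force $f(w) < f(v)$ whenever $w < v$ and $\jdim(w) = \jdim(v)$. The ``secondary argument'' — iterating the cube construction at $w < v$ in the residual case $f(w) = f(v)$ — has no termination guarantee: since the partial order on $P$ is not assumed well-founded, the residual case could recur along an infinite strictly descending chain $v > w_1 > w_2 > \cdots$ all mapping to the same element of $Q$ and all of the same join-dimension. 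Saying one iterates ``until one reaches the base case'' presupposes termination rather than establishing it. In a finite lattice this is harmless, and in a product of total orders the issue never arises (there any two distinct join-irreducibles meet at the minimum, so $\X(\emptyset)$ is already minimal), but the theorem is asserted for general join-factorization lattices, which include examples like your $\{a,b,c,d\}$ down-set lattice where the bottom vertex genuinely retains the top's join-dimension. Without a structural lemma showing either that $f$ does decrease along the cube, or that the iterated construction stabilizes, or a differently organized induction, the inductive step is not complete.
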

Here, a model category $\M$ is \emph{good} if $\Fun(P, \M)$ admits the projective model structure.

\subsection{Poset calculus}

Poset cocalculus has a dual, \emph{poset calculus}.
We recall the main definitions and results here.

\begin{definition}
    Let $P$ be a distributive lattice, and $\C$ a finitely complete category. A functor $F \colon P \to \C$ is \emph{degree $n$} if it takes strongly bicartesian $(n+1)$-cubes to cartesian $(n+1)$-cubes.
\end{definition}

Given a functor $F \colon P \to \C$, we denote by $T^n F$ its \emph{degree $n$ approximation} (defined as in \cite[Section 8]{hem2025posetfunctorcocalculusapplications}).
Explicitly,
\begin{equation}
    T^n F (x) = \underset{v \in P^{\le n}, v \le x}{\lim} F(v).
\end{equation}
Observe that $T^n$ commutes with limits in $\Fun(P, \C)$, as limits commute with limits.

\begin{definition}
    Let $P$ be a distributive lattice, and $\M$ a model category such that $\Fun(P, \M)$ admits the injective model structure. A functor $F \colon P \to \M$ is \emph{homotopy degree $n$} if it takes strongly bicartesian $(n+1)$-cubes to homotopy cartesian $(n+1)$-cubes.
\end{definition}

As in the dual case, we let $\widetilde T^n F$ denote the \emph{homotopy degree $n$ approximation} of a functor $F$ (defined as in \cite[Section 8]{hem2025posetfunctorcocalculusapplications}).
Explicitly,
\begin{equation}
    \widetilde T^n F (x) = \underset{v \in P^{\le n}, v \le x}{\holim} F(v).
\end{equation}

Dual versions of \autoref{theoremA_cat}, \autoref{theoremB_cat}, \autoref{theoremA} and \autoref{theoremB} hold for poset calculus \cite[Section 8]{hem2025posetfunctorcocalculusapplications}.

\section{Preliminaries on persistence modules}

We briefly state here the main definitions and results in the theory of persistence modules.
We first give the definition of an \emph{interval decomposable} persistence module, and state some established results regarding interval decomposition.
We then go on to state the definition of a middle-exact bipersistence module.

\subsection{Decomposition of persistence modules}

\begin{definition}
    Let $P$ be a poset. An \emph{interval} in $P$ is a subset $I \subset P$ such that the following two conditions hold.
    \begin{itemize}
        \item If $x, y \in I$ and $x \le z \le y$, then $z \in I$.
        \item If $x, y \in I$, then there exists a zigzag of partial order relations
        \begin{equation*}
            x = a_0 \le b_0 \ge a_1 \le \dots \le b_n = y
        \end{equation*}
        connecting $x$ and $y$.
    \end{itemize}
\end{definition}

\begin{definition}
    Let $\Fb$ be a field, let $P$ be a poset and let $I \subset P$ be an interval. The \emph{interval module} $\Fb_{I} \colon P \to \Vecs_{\Fb}$ is defined as
    \begin{equation*}
        \Fb_I(x) = 
        \begin{cases}
            0, \quad x \notin I, \\
            \Fb, \quad x \in I,
        \end{cases}
    \end{equation*}
    and
    \begin{equation*}
        \Fb_I(x \le y) = 
        \begin{cases}
            \id_{\Fb}, \quad x,y \in I, \\
            0, \quad \textrm{otherwise.}
        \end{cases}
    \end{equation*}
\end{definition}

\begin{definition}
    A persistence module $F \colon P \to \Vecs_{\Fb}$ is \emph{interval decomposable} if it is isomorphic to a direct sum of interval modules, i.e., if there exists a set $\mathcal{B}(F)$ of intervals in $P$ such that
    \begin{equation*}
        F \ \cong \ \bigoplus_{I \in \mathcal{B}(F)} \Fb_{I}.
    \end{equation*}
\end{definition}

We say that a persistence module $F \colon P \to \Vecs_{\Fb}$ is \emph{pointwise finite-dimensional} (p.f.d.) if $F(x)$ is finite-dimensional for every $x \in P$.

We say that $F \colon P \to \Vecs_{\Fb}$ is \emph{indecomposable} if $F \cong F' \oplus F''$ implies that $F' \cong 0$ or $F'' \cong 0$.

\begin{theorem}\label{thm:general_decomposition}
    For $P$ any poset and $F \colon P \to \Vecs_{\Fb}$ pointwise finite-dimensional, $F$ is isomorphic to a direct sum of indecomposable persistence modules.
\end{theorem}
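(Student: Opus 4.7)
The plan is to use a Krull--Schmidt--Azumaya style argument adapted to the p.f.d.\ setting. I would proceed in two steps: first, show that every indecomposable p.f.d.\ persistence module has a local endomorphism ring; second, produce a decomposition of $F$ into indecomposable summands via a transfinite argument.

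For the first step, let $F$ be indecomposable and p.f.d., and let $\phi \in \mathrm{End}(F)$. The key is a pointwise Fitting decomposition that assembles into a global one. At each $x \in P$, the chains $\ker(\phi_x^n)$ and $\Ima(\phi_x^n)$ stabilize by finite-dimensionality of $F(x)$, yielding a standard Fitting decomposition $F(x) = \bigcup_n \ker(\phi_x^n) \oplus \bigcap_n \Ima(\phi_x^n)$. Functoriality of $\phi$ implies that the assignments $x \mapsto \bigcup_n \ker(\phi_x^n)$ and $x \mapsto \bigcap_n \Ima(\phi_x^n)$ define subfunctors $K, I \subseteq F$, and the pointwise directness of the sum upgrades to a functorial direct sum decomposition $F \cong K \oplus I$. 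Since $F$ is indecomposable, one of $K, I$ vanishes, so $\phi$ is either (pointwise) nilpotent or an isomorphism. It follows that the non-units of $\mathrm{End}(F)$ form a two-sided ideal, hence $\mathrm{End}(F)$ is local.

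For the second step, I would use Zorn's lemma on the poset of families $\{F_i\}$ of pairwise orthogonal indecomposable direct summands of $F$, ordered by refinement. A maximal such family produces an injection $\bigoplus_i F_i \hookrightarrow F$ which splits, and it must be an isomorphism: otherwise the complementary summand $F'$ would itself contain a nonzero indecomposable direct summand, contradicting maximality. This reduces the problem to the auxiliary claim that every nonzero p.f.d.\ persistence module has a nonzero indecomposable direct summand. I would prove this by fixing a point $x$ with $F(x) \neq 0$, considering direct summands $F' \subseteq F$ with $F'(x) \neq 0$, and carefully minimizing $\dim F'(x)$ while iterating the Fitting-type analysis above to extract a summand that admits no further nontrivial splitting.

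The main obstacle is the auxiliary claim at the end. The generality of $P$ precludes any straightforward global dimension induction, since $F$ has no uniform dimension bound; one must patch together local finite-dimensionality with transfinite splitting in a compatible way. This step is precisely the technical heart of the Botnan--Crawley-Boevey decomposition theorem, and everything else reduces cleanly once it is in hand.
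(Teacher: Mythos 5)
The paper itself gives no proof here; it simply cites \cite[Theorem 4.2]{BotnanLesnickMultipersistence}, which rests on the decomposition theorem of Botnan and Crawley-Boevey. Your overall strategy (pointwise Fitting lemma to establish local endomorphism rings, then Zorn's lemma to build a decomposition) is the correct skeleton, and your first step is sound.

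There are, however, two genuine gaps in the second step. First, you assert that a maximal family $\{F_i\}$ of pairwise orthogonal indecomposable summands yields a split injection $\bigoplus_i F_i \hookrightarrow F$. This is not automatic: for Zorn's lemma to apply, you must show that along an increasing chain of such families, the union is again a family whose direct sum is a direct summand. The complements along the chain form a decreasing sequence of subfunctors, which stabilize pointwise by p.f.d.-ness, but verifying that these pointwise stabilizations glue to a compatible global complement is precisely the nontrivial content your sketch elides. Second, and more seriously, the minimization argument in your auxiliary claim fails as stated: if $F'$ minimizes $\dim F'(x)$ over direct summands with $F'(x) \neq 0$, and $F' \cong A \oplus B$ with $A(x) \neq 0$, minimality only forces $B(x) = 0$, not $B = 0$. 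The dimension at a single fixed point is the wrong invariant --- a summand of minimal dimension at $x$ can perfectly well decompose away from $x$, so no indecomposable summand is extracted. The Botnan--Crawley-Boevey argument instead applies Zorn's lemma to full decompositions $M = N_1 \oplus N_2$ with a fixed nonzero vector $\xi \in N_1(x)$, ordered so that the $N_1$'s shrink, and the proof that the extremal $N_1$ is indecomposable exploits the chosen vector in a more delicate way than a pointwise dimension count; your phrase ``iterating the Fitting-type analysis'' gestures toward this but provides neither a termination guarantee nor a reason the iteration converges to an indecomposable.
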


\begin{proof}
    This is stated in \cite[Theorem 4.2]{BotnanLesnickMultipersistence}. 
\end{proof}

Note that in the preceding theorem, the indecomposable persistence modules need not be interval modules. This contrasts with the single-parameter case, i.e., when $P = \R$, where every p.f.d. persistence module decomposes into a direct sum of interval modules \cite{interval_decomp_source, structure_theorem_crawley, structure_theorem_webb}.

We say that a subset $Q \subseteq P$ is \emph{directed} if for every $x,y \in Q$, there exists a $u \in Q$ such that $u \ge x,y$. We say that $Q \subseteq P$ is \emph{down-closed} if for every $x \in Q$ and $y \in P$ with $y \le x$, we have $y \in Q$.
Note that a down-closed and directed subset is an interval.

An object $A$ in a category is \emph{injective} if, for any morphism $f \colon X \to A$ and monomorphism $j \colon X \to Y$, there exists a morphism $g \colon Y \to A$ such that $gj = f$.
\begin{lemma}\label{lemma:directed_ideal_injective}
    Let $P$ be a poset, and let $I \subseteq P$ be down-closed and directed. Then
    \begin{equation*}
        \Fb_{I} \colon P \to \Vecs_{\Fb}
    \end{equation*}
    is an injective object in $\Fun(P, \Vecs_{\Fb})$.
\end{lemma}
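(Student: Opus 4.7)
The plan is a two-step reduction. First I would identify $\Fb_I$ as the extension by zero of a constant functor from $I$ to $P$, and use an adjunction to reduce the problem to injectivity of that constant functor. Second, I would prove that the constant functor with value $\Fb$ is injective in $\Fun(I, \Vecs_\Fb)$, exploiting that a directed poset is filtered as a category so that colimits over $I$ are exact.

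For the first step, let $i \colon I \hookrightarrow P$ denote the inclusion, let $i^* \colon \Fun(P, \Vecs_\Fb) \to \Fun(I, \Vecs_\Fb)$ be restriction, and let $i_? \colon \Fun(I, \Vecs_\Fb) \to \Fun(P, \Vecs_\Fb)$ denote extension by zero, so that $\Fb_I = i_?(\underline{\Fb})$ where $\underline{\Fb}$ is the constant functor with value $\Fb$. I would verify the adjunction $i^* \dashv i_?$: a natural transformation $\alpha \colon F \to i_? G$ has $\alpha_x = 0$ for $x \notin I$ by codomain, and the naturality square for a pair $x \le y$ with $x \in I$ and $y \notin I$ commutes automatically because both $\alpha_y$ and the structure map $(i_? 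G)(x \le y) \colon G(x) \to 0$ vanish; pairs with $x \notin I$ and $y \in I$ cannot occur by down-closedness of $I$. Hence $\alpha$ corresponds precisely to a natural transformation $i^* F \to G$. Since $i^*$ is exact (monos, kernels, and cokernels in functor categories to $\Vecs_\Fb$ are computed pointwise), its right adjoint $i_?$ preserves injective objects, reducing the problem to showing that $\underline{\Fb}$ is injective in $\Fun(I, \Vecs_\Fb)$.

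For the second step, the universal property of the colimit gives a natural isomorphism
\begin{equation*}
    \Hom_{\Fun(I, \Vecs_\Fb)}(G, \underline{\Fb}) \ \cong \ \Hom_\Fb\bigl(\colim_{x \in I} G(x),\, \Fb\bigr).
\end{equation*}
Since $I$ is directed, it is filtered as a category, so $\colim_I$ is exact and in particular sends monomorphisms in $\Fun(I, \Vecs_\Fb)$ to injections of vector spaces. Composing with the contravariant exact functor $\Hom_\Fb(-, \Fb)$, which sends injections to surjections, we conclude that $\Hom_{\Fun(I, \Vecs_\Fb)}(-, \underline{\Fb})$ sends monomorphisms to surjections, so $\underline{\Fb}$ is injective.

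The main point to verify carefully is the adjunction $i^* \dashv i_?$, where down-closedness of $I$ is the crucial input that rules out non-trivial naturality constraints coming from pairs straddling the boundary of $I$; the directedness hypothesis is then precisely what powers exactness of $\colim_I$ in the second step.
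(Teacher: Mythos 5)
Your proof is correct. The paper itself does not give an argument for this lemma; it simply cites \cite[Lemma 2.1]{botnanMiddleExactness}, so you are supplying a genuinely self-contained proof rather than reproducing the paper's. The two-step reduction you use is the standard and, to my recollection, essentially the argument found in the cited source.

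A couple of remarks that could streamline the write-up. Your hand-built adjunction $i^* \dashv i_?$ is in fact the usual restriction/right-Kan-extension adjunction in disguise: for a down-closed $I$, the comma poset $\{y \in I : y \ge x\}$ is empty when $x \notin I$ and has $x$ as initial object when $x \in I$, so $\Ran_i G$ is literally extension by zero. Observing this lets you invoke the standard adjunction directly rather than verifying the bijection on naturality squares case by case (though your case analysis is correct and is exactly where down-closedness enters). Similarly, your second step can be phrased as: $\underline{\Fb} = \Delta\Fb$ where $\Delta \colon \Vecs_\Fb \to \Fun(I,\Vecs_\Fb)$ is the constant-diagram functor, which is right adjoint to $\colim_I$; since $I$ is directed hence filtered, $\colim_I$ is exact, so $\Delta$ preserves injectives, and every vector space is injective in $\Vecs_\Fb$. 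This is the same content as your $\Hom$-isomorphism argument, packaged so that both steps become instances of the single fact that a right adjoint to an exact functor preserves injectives.

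The only thing worth flagging for a reader is the implicit appeal to exactness of filtered colimits in $\Vecs_\Fb$; this is standard (it holds in any Grothendieck category), but since it is the one place where directedness is actually used, it merits an explicit citation or a sentence of justification in a final write-up.
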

\begin{proof}
    This is stated in \cite[Lemma 2.1]{botnanMiddleExactness}.
\end{proof}
Note that the dual of this statement is not true. That is, an interval module over an up-closed, codirected interval is not projective in general. For example, the module $\Fb_{(a, \infty)} \colon \R \to \Vecs_{\Fb}$ is not projective (\cite[Lemma 3.2.11]{lerch_thesis}).

\begin{lemma}\label{lemma:axises_decomposable}
    Let $P \subseteq [0, \infty]^2$ be one of:
    \begin{itemize}
        \item $([0, \infty] \times \{0\}) \cup (\{0\} \times [0, \infty])$,
        \item $([0, \infty] \times \{\infty\}) \cup (\{\infty\} \times [0, \infty])$,
    \end{itemize}
    and let $F \colon P \to \Vecs_{\Fb}$ be p.f.d. Then $F$ is interval decomposable.
\end{lemma}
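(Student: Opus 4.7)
The plan is to restrict $F$ to the two arms $L := [0,\infty]\times\{0\}$ and $R := \{0\}\times[0,\infty]$ of $P$, both order-isomorphic to $[0,\infty]$ and meeting in $\{(0,0)\}$. The restrictions $F_L := F|_L$ and $F_R := F|_R$ are p.f.d.\ persistence modules over a total order, so by the classical structure theorem each decomposes into a direct sum of interval modules. The goal is to show that these decompositions can be chosen compatibly, so as to assemble into an interval decomposition of $F$ itself; the subtlety is that the decompositions of $F_L$ and $F_R$ induce a priori different direct-sum decompositions of the finite-dimensional vector space $V := F(0,0)$.

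To reconcile the two sides, I would consider the two filtrations of $V$ given by the kernels $V_L^t := \ker(F(0,0) \to F(t,0))$ and $V_R^s := \ker(F(0,0) \to F(0,s))$. Each is an increasing filtration taking only finitely many distinct values (since $V$ is finite-dimensional), and the jump data of $V_L^{\bullet}$ (resp.\ $V_R^{\bullet}$) records the intervals containing $(0,0)$ in any decomposition of $F_L$ (resp.\ $F_R$). The crux of the proof is a simultaneous-splitting lemma: two filtrations of a finite-dimensional vector space always admit a common splitting basis $\{g_1,\dots,g_n\}$ of $V$, so that each $V_L^t$ and each $V_R^s$ is the span of a subset of $\{g_i\}$. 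This is a special case of the structure theorem for zigzag persistence modules, or equivalently of Gabriel's theorem applied to the $A$-type quiver obtained from the combined zigzag $V_L^{t_1} \subset \cdots \subset V_L^{t_{\mathrm{max}}} \subset V \supset V_R^{s_{\mathrm{max}}} \supset \cdots \supset V_R^{s_1}$.

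Given such a basis, one can rechoose the interval decompositions of $F_L$ and $F_R$ so that each line $\Fb \cdot g_i$ is generated by a single interval summand containing $(0,0)$: the corresponding interval in $F_L$ has the form $[0, t_i^L)$ or $[0, t_i^L]$ (where $t_i^L$ is the death time of $g_i$ in the horizontal filtration), and similarly for $F_R$. These two sub-interval modules glue to an interval submodule $\Fb_{I_i} \subset F$ indexed by the convex connected subset $I_i = ([0,t_i^L) \times \{0\}) \cup (\{0\} \times [0,t_i^R)) \subseteq P$ (with closedness at $t_i^L, t_i^R$ adjusted as appropriate). The remaining summands of $F_L$ and $F_R$ are interval modules indexed by intervals disjoint from $(0,0)$ and contribute directly as interval submodules of $F$; summing everything produces the desired interval decomposition.

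For the second case $P = ([0,\infty]\times\{\infty\}) \cup (\{\infty\}\times[0,\infty])$, one can either observe that this poset is the opposite of the first and invoke the fact that p.f.d.\ interval decomposability is preserved by linear duality, or carry out the analogous argument at the maximum $(\infty,\infty)$ using images of incoming maps in place of kernels of outgoing ones. The main technical obstacle I anticipate is the simultaneous-splitting step; once it is in hand, the remainder of the argument is essentially bookkeeping about open/closed endpoints and verifying that the basis-compatible decompositions of $F_L$ and $F_R$ glue correctly along $(0,0)$.
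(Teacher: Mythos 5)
Your proof is correct, but it takes a genuinely different route from the paper. The paper disposes of the lemma in one line: it cites a result of Botnan--Crawley-Boevey (Lemma~5.3 of \texttt{botnanMiddleExactness}) asserting that the indecomposable p.f.d.\ modules over precisely these corner-shaped posets are interval modules, and then combines this with the general Krull--Remak--Schmidt-style statement (\autoref{thm:general_decomposition}) that p.f.d.\ modules decompose into indecomposables. You instead give a self-contained construction: decompose the two arms separately via the one-parameter structure theorem, reduce the gluing problem at $F(0,0)$ to finding a basis adapted simultaneously to the two kernel flags $V_L^\bullet$ and $V_R^\bullet$, and observe that this simultaneous splitting follows from the $A_n$-quiver case of Gabriel's theorem (or, equivalently, from the classical fact that the lattice generated by two chains of subspaces is distributive). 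The one part of your argument that deserves to be made explicit is why, once the adapted basis $\{g_i\}$ is chosen, the submodules $G_i$ it generates actually assemble to a direct-sum decomposition of the part of $F$ supported at $(0,0)$: this relies on the observation that the summands of $F|_L$ containing $(0,0)$ give a module $M_L$ with all structure maps $M_L(0,0)\to M_L(x,0)$ surjective, so $M_L(x,0)\cong F(0,0)/V_L^x$ and the $g_i$'s project to a basis there; the same holds on the other arm. With that filled in, the gluing is indeed just bookkeeping. Your reduction of the second corner $([0,\infty]\times\{\infty\})\cup(\{\infty\}\times[0,\infty])$ to the first via linear duality is also valid, since the poset is the opposite of the first one and duality carries p.f.d.\ interval modules to p.f.d.\ interval modules. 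In short: same conclusion, but the paper outsources the key fact to the literature while you derive it from scratch; your version is longer but more elementary, relying only on the one-parameter structure theorem and elementary linear algebra.
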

\begin{proof}
    By \cite[Lemma 5.3]{botnanMiddleExactness}, for the posets $P$ of this lemma, the indecomposable persistence modules with source $P$ are precisely the interval modules. The result now follows from \autoref{thm:general_decomposition}.
\end{proof}

\subsection{Middle exactness}

\begin{definition}
    Let $\A$ be an abelian category.
    A commuting square in $\A$
    \begin{center}
    \begin{tikzcd}
    A \arrow[r, "\alpha"] \arrow[d, "f"] & B \arrow[d, "g"] \\
    C \arrow[r, "\beta"]           & D,
    \end{tikzcd}
    \end{center}
    is said to be \emph{middle-exact} if the following sequence, called the associated complex,
    \begin{equation*}
        A \xrightarrow{\begin{pmatrix}
            \alpha \\ f
        \end{pmatrix}} B \oplus C \xrightarrow{\begin{pmatrix}
            g & -\beta
        \end{pmatrix}} D
    \end{equation*}
    is exact (i.e., the image of the left map equals the kernel of the right map).
\end{definition}

The following definition is Definition 5.1 in \cite{botnanMiddleExactness}.

\begin{definition}\label{def:middle_exactness}
    Let $P = S \times T$, where $S$ and $T$ are total orders.
    A multipersistence module $F \colon P \to \Vecs_{\Fb}$ is \emph{middle-exact} if, for all $x, y \in P$, the square
    \begin{center}
    \begin{tikzcd}
    F(x \wedge y) \arrow[r] \arrow[d] & F(x) \arrow[d] \\
    F(y) \arrow[r]           & F(x \vee y)
    \end{tikzcd}
    \end{center}
    is middle-exact.
\end{definition}

\section{(Co)degree 1 bipersistence modules}

In this section, we show that a p.f.d. bipersistence module is interval decomposable if it is degree 1 or codegree 1.

Let $P$ be a poset. Recall that a subset $Q \subseteq P$ is \emph{down-closed} if for every $x \in Q$, whenever $y \in P$ satisfies $y \le x$, then $y \in Q$. Similarly, we say that a subset $Q \subseteq P$ is \emph{up-closed} if for every $x \in Q$, whenever $y \in P$ satisfies $y \ge x$, then $y \in Q$.
\begin{definition}\label{def:blocks}
    An interval $I \subseteq [0, \infty]^2$ is called a \emph{block} if it is one of the following
    \begin{itemize}
        \item \textbf{(Death block)} $I = J_1 \times J_2$ for down-closed subsets $J_1, J_2 \subseteq [0, \infty]$.
        \item \textbf{(Birth block)} $I = J_1 \times J_2$ for up-closed subsets $J_1, J_2 \subseteq [0,\infty]$.
        \item \textbf{(Vertical block)} $I = J \times [0,\infty]$ for an interval $J \subseteq [0,\infty]$.
        \item \textbf{(Horizontal block)} $I = [0,\infty] \times J$ for an interval $J \subseteq [0,\infty]$.
    \end{itemize}
\end{definition}

\begin{figure}[htbp]
    \centering
    \includegraphics[width=0.5\linewidth]{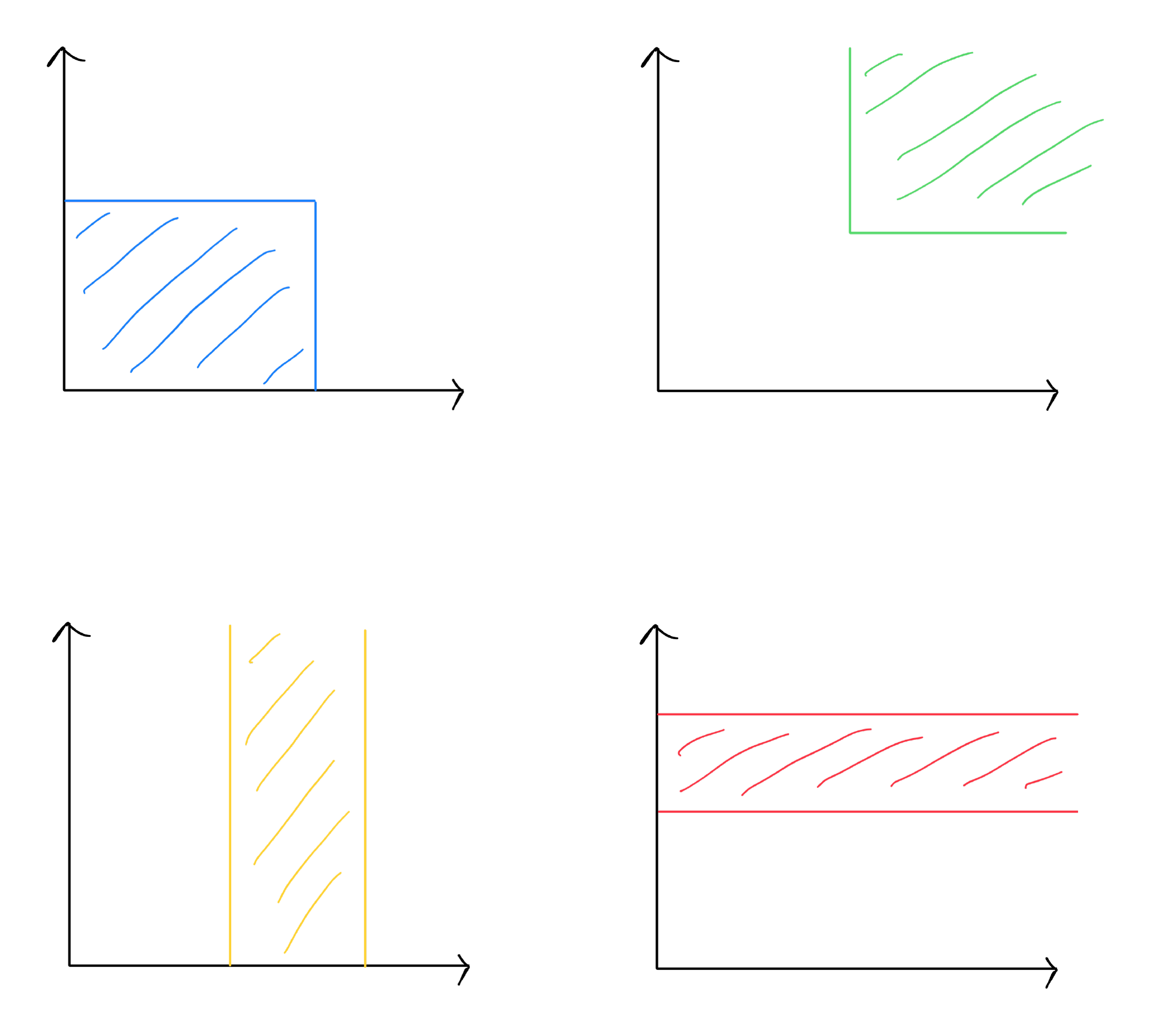}
    \caption{The four types of blocks defined in \autoref{def:blocks}. From the top left: a death block, a birth block, a vertical block, and a horizontal block.}
\end{figure}

\begin{proposition}\label{prop:codegree_1_interval_decomposable}
    A codegree 1 p.f.d. functor $F \colon [0,\infty]^2 \to \Vecs_{\Fb}$ decomposes into a direct sum of interval modules over death blocks, vertical blocks and horizontal blocks.
    
    Dually, a degree 1 p.f.d. functor $F \colon [0,\infty]^2 \to \Vecs_{\Fb}$ decomposes into a direct sum of interval modules over birth blocks, vertical blocks and horizontal blocks.
\end{proposition}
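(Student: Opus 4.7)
The plan is to reduce $F$ to its restriction to the join-irreducibles and then push that decomposition forward via a left Kan extension.

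First I would identify $P_{\le 1}$ for $P = [0,\infty]^2$. A quick check shows that an element $(a,b)$ with $a,b > 0$ satisfies $(a,b) = (a,0) \vee (0,b)$ and hence fails to be join-irreducible, while the elements on either coordinate axis are join-irreducible (and $(0,0)$ has join-dimension $0$). Thus
\begin{equation*}
    P_{\le 1} \;=\; \bigl([0,\infty]\times\{0\}\bigr) \,\cup\, \bigl(\{0\}\times[0,\infty]\bigr).
\end{equation*}
Since $[0,\infty]^2$ is a product of total orders with minimal elements, it is a join-factorization lattice, so I can invoke \autoref{theoremB_cat}: because $F$ is codegree $1$, the counit $T_1 F \to F$ is an isomorphism, and by definition $T_1 F \cong \Lan_i(F|_{P_{\le 1}})$, where $i \colon P_{\le 1} \hookrightarrow P$ is the inclusion.

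Next, I would apply \autoref{lemma:axises_decomposable} to the restriction $F|_{P_{\le 1}}$, which is still p.f.d., to write it as a direct sum of interval modules $\Fb_J$ with $J$ an interval in $P_{\le 1}$. Because left Kan extensions commute with colimits, and in particular direct sums, this yields
\begin{equation*}
    F \;\cong\; T_1 F \;\cong\; \bigoplus_{J} \Lan_i \Fb_J.
\end{equation*}

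The main work is then the explicit computation of $\Lan_i \Fb_J$ for each type of interval $J \subseteq P_{\le 1}$. The intervals in $P_{\le 1}$ fall into three classes: those contained in the open horizontal axis $(0,\infty]\times\{0\}$, those contained in the open vertical axis $\{0\}\times(0,\infty]$, and those that contain the origin $(0,0)$, the latter being of the form $(J_1 \times \{0\}) \cup (\{0\} \times J_2)$ for downsets $J_1, J_2$ of $[0,\infty]$. Using the pointwise formula $(\Lan_i \Fb_J)(x,y) = \colim_{(u,v) \in P_{\le 1},\, (u,v) \le (x,y)} \Fb_J(u,v)$, a direct analysis of the filtered colimit of identity and zero maps shows: horizontal-axis intervals $[a,b]\times\{0\}$ extend to the interval module over the vertical block $[a,b]\times[0,\infty]$; vertical-axis intervals extend analogously to horizontal blocks; and intervals through the origin extend to interval modules over the death block $J_1 \times J_2$ (the key observation being that if $x$ leaves $J_1$, the zero transition kills the generator at $(0,0)$ and hence everything on the vertical branch as well). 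Assembling these pieces gives the claimed decomposition.

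The hard part will be this case analysis of the colimits, in particular verifying that when the interval $J$ contains $(0,0)$ the support of $\Lan_i \Fb_J$ is genuinely rectangular rather than the apparent ``L-shape'' below it, which forces the three branches of the colimit to interact. The degree $1$ case is then entirely dual: by the dual of \autoref{theoremB_cat}, $F \cong T^1 F = \Ran_j(F|_{P^{\le 1}})$ where $P^{\le 1}$ is the union of the two ``upper'' axes $[0,\infty]\times\{\infty\}$ and $\{\infty\}\times[0,\infty]$, and the analogous right Kan extension computation produces vertical blocks, horizontal blocks, and birth blocks for intervals containing $(\infty,\infty)$.
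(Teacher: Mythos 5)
Your proposal takes essentially the same route as the paper's proof: identify $P_{\le 1}$ as the union of the two axes, invoke codegree 1 plus the join-factorization property to get $F \cong T_1 F = \Lan_i(F|_{P_{\le 1}})$, decompose $F|_{P_{\le 1}}$ by \autoref{lemma:axises_decomposable}, commute $\Lan_i$ past the direct sum, and then compute $\Lan_i \Fb_J$ for the three types of intervals in $P_{\le 1}$. Your explicit description of the pushout-of-a-span computation (and the observation that the zero transition out of $(0,0)$ forces the rectangular rather than L-shaped support) is precisely the content of the computation the paper carries out for the origin-containing case, and your handling of the dual statement via $P^{\le 1}$ and right Kan extension matches the paper's appeal to duality.
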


\begin{figure}[htbp]
    \centering
    \includegraphics[width=0.6\linewidth]{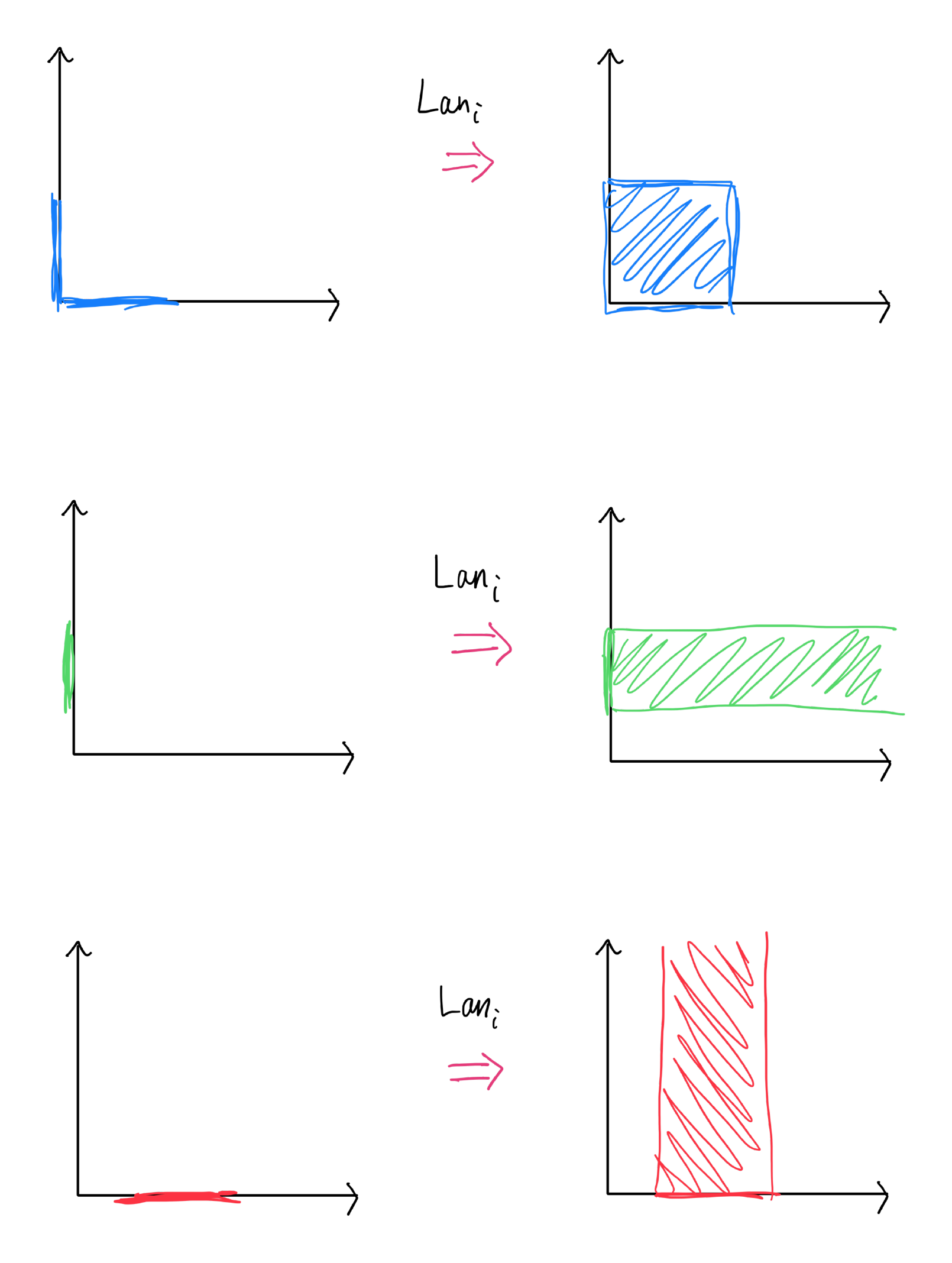}
    \caption{Taking left Kan extensions of different interval modules.}
    \label{fig:blocks_illustration}
\end{figure}

\begin{proof}
    We prove the first part. The second is dual.

    Let $P = [0, \infty]^2$.
    By the theory of poset cocalculus,
    \begin{equation*}
        F \cong T_1 F = \Lan_i F|_{P_{\le 1}},
    \end{equation*}
    where $P_{\le 1} = ([0, \infty] \times \{0\}) \cup (\{0\} \times [0, \infty])$, and $\Lan_i$ denotes the left Kan extension over the inclusion $i \colon P_{\le 1} \hookrightarrow [0,\infty]^2$. By \autoref{lemma:axises_decomposable}, $F|_{P_{\le 1}}$ is interval decomposable.

    We write
    \begin{equation*}
        F|_{P_{\le 1}} = \bigoplus_{I \in \mathcal{B}} \Fb_{I}.
    \end{equation*}
    Now,
    \begin{equation*}
        T_1 F = \Lan_i \bigoplus_{I \in \mathcal{B}} \Fb_{I}
        \ \cong  \bigoplus_{I \in \mathcal{B}} \Lan_i \Fb_{I}
    \end{equation*}
    
    We show that for each $I \in \mathcal{B}$, $\Lan_i \Fb_I$ is an interval module over a death/vertical/horizontal block.
    We can classify the intervals in $\mathcal{B}$ into three types:
    \begin{enumerate}
        \item Intervals containing $(0,0)$.
        \item Intervals lying entirely in the positive part of the $y$-axis.
        \item Intervals lying entirely in the positive part of the $x$-axis.
    \end{enumerate}
    It is straightforward to check that taking $\Lan_i$ over an interval module of one of these types gives an interval module over a death block, horizontal block or vertical block, respectively. This is illustrated in figure \autoref{fig:blocks_illustration}.
    
    We show the computation explicitly for case (1). The other cases can be computed similarly. Let $I$ be an interval in $P_{\le 1}$. Then $I = (J_1 \times \{0\}) \cup (\{0\} \times J_2)$ for some nonempty down-closed subsets $J_1, J_2 \subseteq [0, \infty]$. We now compute $\Lan_i \Fb_I$ objectwise at $(x,y) \in [0,\infty]^2$.
    \begin{equation*}
        \Lan_i \Fb_I (x,y) \cong
        \colim \left(
            \begin{tikzcd}[column sep = scriptsize]
            \Fb_I(0,0) \arrow[d] \arrow[r] & \Fb_I(x,0) \\
            \Fb_I(0,y)                     &  
            \end{tikzcd}
        \right)
    \end{equation*}
    \begin{equation*}
        \cong \begin{cases}
            \Fb, \quad x \in J_1 \textrm{ and } y \in J_2, \\
            0, \quad \textrm{otherwise.}
        \end{cases}
    \end{equation*}
    This is an interval module over the down-closed subset $J_1 \times J_2 \subseteq [0,\infty]^2$.
\end{proof}

\begin{example}
We illustrate how to compute the interval decomposition of a codegree 1 functor.

The following functor, $F \colon \{0,1,2\}^2 \to \Vecs_{\Fb}$, is codegree 1.
\begin{center}
\begin{tikzpicture}
    \diagram{d}{4em}{4em}{
        \Fb & \Fb^2 & \Fb^2 \\
        \Fb & \Fb^2 & \Fb \\
        \Fb & \Fb^2 & \Fb \\
    };
    
    \path[->,font = \scriptsize, midway]
    (d-1-1) edge node[above]{$\begin{pmatrix}1 \\ 0\end{pmatrix}$} (d-1-2)
    (d-1-2) edge node[above]{$1$} (d-1-3)
    (d-2-1) edge node[above]{$\begin{pmatrix}1 \\ 0\end{pmatrix}$} (d-2-2)
    (d-2-2) edge node[above]{$\begin{pmatrix}0 & 1\end{pmatrix}$} (d-2-3)
    (d-3-1) edge node[above]{$\begin{pmatrix}1 \\ 0\end{pmatrix}$} (d-3-2)
    (d-3-2) edge node[above]{$\begin{pmatrix}0 & 1\end{pmatrix}$} (d-3-3)
    (d-2-1) edge node[left]{$0$} (d-1-1)
    (d-2-2) edge node[right]{$\begin{pmatrix}0 & 0 \\ 0 & 1\end{pmatrix}$} (d-1-2)
    (d-2-3) edge node[right]{$\begin{pmatrix}0 \\ 1\end{pmatrix}$}(d-1-3)
    (d-3-1) edge node[left]{$1$} (d-2-1)
    (d-3-2) edge node[right]{$1$} (d-2-2)
    (d-3-3) edge node[right]{$1$} (d-2-3);
\end{tikzpicture}
\end{center}

The restriction $F|_{P_{\le 1}}$ is:
\begin{center}
\begin{tikzpicture}
    \diagram{d}{3em}{3em}{
        \Fb & \ & \ \\
        \Fb & \ & \ \\
        \Fb & \Fb^2 & \Fb \\
    };
    
    \path[->,font = \scriptsize, midway]
    (d-3-1) edge node[above]{$\begin{pmatrix}1 \\ 0\end{pmatrix}$} (d-3-2)
    (d-3-2) edge node[above]{$\begin{pmatrix}0 & 1\end{pmatrix}$} (d-3-3)
    (d-2-1) edge node[left]{$0$} (d-1-1)
    (d-3-1) edge node[left]{$1$} (d-2-1);
\end{tikzpicture}
\end{center}

An interval decomposition of $F|_{P_{\le 1}}$ is $\bigoplus_{I \in \mathcal{B}} \Fb_I$, where
\begin{align*}
    \mathcal{B} = \{& \{(1,0), (2,0)\}, \\
    & \{(0,2)\} \\
    & \{(0,1), (0,0), (1,0)\} \}.
\end{align*}

We compute $\Lan_i \Fb_I$ for each of these intervals.
\begin{center}
\begin{tikzpicture}
    \diagram{d}{2em}{2em}{
        0 & \ & \ & \ & \ & 0 & \Fb & \Fb \\
        0 & \ & \ & \ & \ & 0 & \Fb & \Fb \\
        0 & \Fb & \Fb & \ & \ & 0 & \Fb & \Fb \\
    };
    
    \path[->,font = \scriptsize, midway]
    (d-3-1) edge (d-3-2)
    (d-3-2) edge node[above]{$1$} (d-3-3)
    (d-2-1) edge (d-1-1)
    (d-3-1) edge (d-2-1)
    (d-2-4) edge[thick] node[above]{\large{$\Lan_i$}} (d-2-5)
    (d-1-6) edge (d-1-7)
    (d-1-7) edge node[above]{$1$} (d-1-8)
    (d-2-6) edge (d-2-7)
    (d-2-7) edge node[above]{$1$} (d-2-8)
    (d-3-6) edge (d-3-7)
    (d-3-7) edge node[above]{$1$} (d-3-8)
    (d-2-6) edge (d-1-6)
    (d-2-7) edge node[right]{$1$} (d-1-7)
    (d-2-8) edge node[right]{$1$}(d-1-8)
    (d-3-6) edge (d-2-6)
    (d-3-7) edge node[right]{$1$} (d-2-7)
    (d-3-8) edge node[right]{$1$} (d-2-8);
\end{tikzpicture}
\end{center}

\begin{center}
\begin{tikzpicture}
    \diagram{d}{2em}{2em}{
        \Fb & \ & \ & \ & \ & \Fb & \Fb & \Fb \\
        0 & \ & \ & \ & \ & 0 & 0 & 0 \\
        0 & 0 & 0 & \ & \ & 0 & 0 & 0 \\
    };
    
    \path[->,font = \scriptsize, midway]
    (d-3-1) edge (d-3-2)
    (d-3-2) edge (d-3-3)
    (d-2-1) edge (d-1-1)
    (d-3-1) edge (d-2-1)
    (d-2-4) edge[thick] node[above]{\large{$\Lan_i$}} (d-2-5)
    (d-1-6) edge node[above]{$1$} (d-1-7)
    (d-1-7) edge node[above]{$1$} (d-1-8)
    (d-2-6) edge (d-2-7)
    (d-2-7) edge (d-2-8)
    (d-3-6) edge (d-3-7)
    (d-3-7) edge (d-3-8)
    (d-2-6) edge (d-1-6)
    (d-2-7) edge (d-1-7)
    (d-2-8) edge (d-1-8)
    (d-3-6) edge (d-2-6)
    (d-3-7) edge (d-2-7)
    (d-3-8) edge (d-2-8);
\end{tikzpicture}
\end{center}

\begin{center}
\begin{tikzpicture}
    \diagram{d}{2em}{2em}{
        0 & \ & \ & \ & \ & 0 & 0 & 0 \\
        \Fb & \ & \ & \ & \ & \Fb & \Fb & 0 \\
        \Fb & \Fb & 0 & \ & \ & \Fb & \Fb & 0 \\
    };
    
    \path[->,font = \scriptsize, midway]
    (d-3-1) edge node[above]{$1$} (d-3-2)
    (d-3-2) edge (d-3-3)
    (d-2-1) edge (d-1-1)
    (d-3-1) edge node[left]{$1$} (d-2-1)
    (d-2-4) edge[thick] node[above]{\large{$\Lan_i$}} (d-2-5)
    (d-1-6) edge (d-1-7)
    (d-1-7) edge (d-1-8)
    (d-2-6) edge node[above]{$1$} (d-2-7)
    (d-2-7) edge (d-2-8)
    (d-3-6) edge node[above]{$1$} (d-3-7)
    (d-3-7) edge (d-3-8)
    (d-2-6) edge (d-1-6)
    (d-2-7) edge (d-1-7)
    (d-2-8) edge (d-1-8)
    (d-3-6) edge node[left]{$1$} (d-2-6)
    (d-3-7) edge node[right]{$1$} (d-2-7)
    (d-3-8) edge (d-2-8);
\end{tikzpicture}
\end{center}

The three blocks on the right give the interval decomposition of $F$, i.e., $F$ is isomorphic to the direct sum of these three blocks.
\end{example}

\section{Homotopy (co)degree 1 bipersistence modules}

In this section, we describe the relation between (co)degree 1 functors into $\Vecs_{\Fb}$ and homotopy (co)degree 1 functors into $\Ch_{\Fb}$.

Note that as $\Ch_{\Fb}$ is a stable model category, an $n$-cube is homotopy cartesian if and only if it is homotopy cocartesian \cite[Proposition 1.2.4.13]{lurie2017higher}.
Hence, a functor $\widehat F \colon P \to \Ch_{\Fb}$ is homotopy degree $n$ if and only if it is homotopy codegree $n$.

Let $\iota$ denote the inclusion functor $\iota \colon \Vecs_{\Fb} \to \Ch_{\Fb}$ defined by
\begin{equation*}
    \iota(V)_{\bullet} = \begin{cases}
        V, \quad \bullet = 0,\\
        0, \quad \textrm{otherwise.}
    \end{cases}
\end{equation*}
Observe that for any $F \colon P \to \Vecs_\Fb$, there exists a functor $\iota_* F \colon P \hookrightarrow \Ch_\Fb$.

\begin{lemma}\label{lemma:structure_of_cof_repl}
    Let $P$ be a poset, and $F \colon P \to \Vecs_\Fb$ a functor.
    Then $\iota_* F \colon P \to \Ch_\Fb$ admits:
    \begin{itemize}
        \item a cofibrant replacement $Q$ such that for all $x \in P$ and $i < 0$, $Q(x)_i = 0$, and
        \item a fibrant replacement $R$ such that for all $x \in P$ and $i > 0$, $R(x)_i = 0$.
    \end{itemize}
\end{lemma}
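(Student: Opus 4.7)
The plan is to build $Q$ and $R$ from projective and injective resolutions in the Grothendieck abelian category $\Fun(P, \Vecs_\Fb)$, and then view these resolutions as functors into $\Ch_\Fb$.

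For the cofibrant replacement, the representable functors $\Fb[\Hom(p,-)] \colon P \to \Vecs_\Fb$ are projective by Yoneda, and direct sums of representables surject onto any functor. Thus $F$ admits a projective resolution
\begin{equation*}
\cdots \to P_1 \to P_0 \to F \to 0
\end{equation*}
in which each $P_i$ is a direct sum of representables. Define $Q \colon P \to \Ch_\Fb$ objectwise by $Q(x) = (P_\bullet(x), d)$, concentrated in non-negative degrees. The augmentation induces a natural map $Q \to \iota_* F$ that is an objectwise quasi-isomorphism, hence a weak equivalence in the projective model structure. By construction, $Q(x)_i = 0$ for $i < 0$.

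To verify cofibrancy in the projective model structure on $\Fun(P, \Ch_\Fb)$, I would use that this structure has generating cofibrations of the form $\Fb[\Hom(p,-)] \otimes (S^{n-1} \hookrightarrow D^n)$ for $p \in P$ and $n \in \Z$. Filtering $Q$ by its good truncations in non-negative degrees exhibits $0 \to Q$ as a countable composition of pushouts along generating cofibrations with $n \ge 0$, where the attaching data at step $k$ come from the presentation of $P_k$ as a direct sum of representables together with the differential $P_k \to P_{k-1}$.

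Dually, for the fibrant replacement, \autoref{lemma:directed_ideal_injective} shows that the interval modules $\Fb_I$ for down-closed directed $I$ are injective in $\Fun(P, \Vecs_\Fb)$, and suitable products of these give enough injectives. Choose an injective resolution $0 \to F \to I^0 \to I^1 \to \cdots$ and set $R(x)_{-i} = I^i(x)$ for $i \ge 0$ (with zero in positive degrees). Then $\iota_* F \to R$ is an objectwise quasi-isomorphism, $R(x)_i = 0$ for $i > 0$, and fibrancy in the injective model structure is obtained by the dual cotower argument.

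The main obstacle is the cofibrancy/fibrancy verification: the projective and injective model structures on $\Fun(P, \Ch_\Fb)$ are produced by transfer (via \cite{Hess_2017}), so pointwise-projective or pointwise-injective does not in general suffice for the corresponding lifting property. The cellular construction above sidesteps this difficulty by building $Q$ explicitly from generating cofibrations with $n \ge 0$, ensuring simultaneously the cofibrancy and the required degree bound, with the dual argument handling $R$.
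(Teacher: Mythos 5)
Your approach is correct in outline but genuinely different from the paper's. The paper proves the lemma by observing that the inclusion $j \colon \Ch_\Fb^+ \hookrightarrow \Ch_\Fb$ of non-negatively graded chain complexes (with the projective model structure) is left Quillen, inducing a left Quillen functor $j_* \colon \Fun(P, \Ch_\Fb^+) \to \Fun(P, \Ch_\Fb)$; a cofibrant replacement of $\iota_*^+ F$ upstairs then transports to one of $\iota_* F$ that automatically vanishes in negative degrees, and the fibrant case is formally dual via the inclusion of non-positively graded complexes. You instead build $Q$ explicitly from a projective resolution in $\Fun(P, \Vecs_\Fb)$ and verify cofibrancy by hand via a cellular filtration. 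Your cellular argument for $Q$ is essentially right: filtering by the \emph{brutal} truncations $\sigma_{\le n} Q$ (not the good truncations, which alter the terms and would break the identification of $\sigma_{\le n}Q / \sigma_{\le n-1}Q$ with $P_n$ in degree $n$), each inclusion $\sigma_{\le n-1}Q \hookrightarrow \sigma_{\le n}Q$ is a pushout of $\bigoplus_\alpha \Fb_{p_\alpha^\uparrow} \otimes (S^{n-1} \to D^n)$, and $Q$ is their $\omega$-composite, hence cofibrant. Your approach has the advantage of producing a concrete model for $Q$ directly from the homological algebra of $\Fun(P, \Vecs_\Fb)$; the paper's approach is shorter once the Quillen adjunction is in hand and dualizes cleanly.

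The one real gap is that you dispose of the fibrant case with the phrase ``the dual cotower argument,'' which papers over a genuine asymmetry: the injective model structure on $\Fun(P, \Ch_\Fb)$ is not cofibrantly generated by corepresentables tensored with the $S^{n-1} \to D^n$, so there is no set of generating fibrations to pull back along, and the argument is not a formal mirror of the cellular one. What does work, and what you should say, is to check directly that each brutal cotruncation map $R^{(n)} \to R^{(n-1)}$ has the right lifting property against all objectwise acyclic monomorphisms $A \hookrightarrow B$: one shows each such map is a pullback of $D^{-(n-1)}(I^n) \to I^n[-(n-1)]$, and then verifies the lifting problem using that $B/A$ is objectwise acyclic (which forces the relevant prescription on $B_{-n}$ to be compatible on the overlap $j(A_{-n}) + \operatorname{im}(d_B)$) together with the injectivity of $I^n$ in $\Fun(P, \Vecs_\Fb)$ to extend. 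Since $R = \lim_n R^{(n)}$ and towers of fibrations have fibrant limits, this gives fibrancy of $R$. Also, for the claim that products of down-closed directed interval modules give enough injectives, you should supply the embedding $F \hookrightarrow \prod_{p \in P, \phi \in F(p)^*} \Fb_{p^\downarrow}$, which is objectwise a monomorphism by the double-dual embedding and uses only the cited \autoref{lemma:directed_ideal_injective}.
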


\begin{proof}
    We show the first part. The proof of the second is dual.
    Let $\Ch_{\Fb}^+$ denote the model category of non-negatively graded chain complexes, equipped with the projective model structure (i.e., where the weak equivalences are the quasi-isomorphisms, and the fibrations are the morphisms that are epimorphism in each positive degree). 

    The inclusion $j\colon \Ch_\Fb^+ \hookrightarrow \Ch_\Fb$ has a right adjoint $R$ given by
    \begin{equation*}
        R(C)_i = \begin{cases}
            C_i, &\quad i > 0, \\
            \ker(\partial_0 \colon C_0 \to C_{-1}), &\quad i=0,
        \end{cases}
    \end{equation*}
    which induces a Quillen adjunction
    \begin{equation*}
        \begin{tikzcd}[column sep=large]
            \Fun(P, \Ch_\Fb^+) \arrow[r, shift left=1ex, "j_*"{name=G, yshift=1pt}] & \Fun(P, \Ch_\Fb). \arrow[l, shift left=.5ex, "R_*"{name=F}]
            \arrow[phantom, from=F, to=G, , "\scriptscriptstyle\boldsymbol{\bot}"]
        \end{tikzcd} 
    \end{equation*}
    Now, let $\iota^+$ denote the inclusion $\Vecs_\Fb \hookrightarrow \Ch_\Fb^+$, and note that $\iota = j \circ \iota^+$.
    
    Choose a cofibrant replacement $Q$ of $\iota^+_* F$. Then, $j_* Q$ is a cofibrant replacement of $\iota_*(F)$ in $\Ch_\Fb$, and satisfies $(j_* Q)_i = 0$ for all $i < 0$.
\end{proof}

\begin{proposition}\label{prop:H0_of_T1_tilde}
    Let $P$ be a distributive lattice.
    
    If $F \colon P \to \Vecs_{\Fb}$ is a functor, then 
    \begin{equation*}
        H_0 \widetilde T_n (\iota_* F) \cong T_n F,
    \end{equation*}
    and
    \begin{equation*}
        H_0 \widetilde T^n (\iota_* F) \cong T^n F.
    \end{equation*}

%
\end{proposition}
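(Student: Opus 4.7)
The plan is to build a chain-level model of $\widetilde T_n(\iota_* F)$ that is pointwise concentrated in non-negative degrees, and then exploit the fact that $H_0$ commutes with colimits of such complexes. The core observation is that $\iota_* F$ is objectwise concentrated in degree $0$, so a sufficiently good cofibrant replacement of $\iota_* F|_{P_{\le n}}$ can be taken to be pointwise non-negatively graded; after passing to $\Lan_i$, applying $H_0$ then simply reads off the strict colimit computing $T_n F$.

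First, I would apply the argument from \autoref{lemma:structure_of_cof_repl} verbatim, but with the small poset $P_{\le n}$ in place of $P$: that argument invokes only the Quillen adjunction $j_* \dashv R_*$ between $\Fun(-, \Ch_\Fb^+)_{\mathrm{proj}}$ and $\Fun(-, \Ch_\Fb)_{\mathrm{proj}}$, which exists for any small source. This yields a projectively cofibrant replacement $q \colon Q \xrightarrow{\sim} \iota_* F|_{P_{\le n}}$ in $\Fun(P_{\le n}, \Ch_\Fb)$ with $Q(v)_i = 0$ for every $v \in P_{\le n}$ and every $i < 0$. Since restriction $i^*$ preserves pointwise fibrations and weak equivalences, $\Lan_i$ is left Quillen for the projective model structures, so $\Lan_i Q$ represents the homotopy left Kan extension $\widetilde T_n(\iota_* F)$; pointwise,
$$\widetilde T_n(\iota_* F)(x)\ \simeq\ \colim_{v \in P_{\le n},\, v \le x} Q(v).$$

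Next I would apply $H_0$ at each $x$. Because each $Q(v)$, hence the colimit above, is concentrated in non-negative degrees, on these complexes $H_0 = \coker(\partial_1 \colon (-)_1 \to (-)_0)$. Both $\mathrm{ev}_0$ and $\mathrm{ev}_1$ are left adjoints (so preserve colimits) and $\coker$ is itself a colimit, so $H_0$ commutes with colimits of non-negatively graded chain complexes. Combined with the identification $H_0(Q(v)) \cong H_0(\iota F(v)) = F(v)$, which follows from $q_v$ being a quasi-isomorphism, this yields
$$H_0 \widetilde T_n(\iota_* F)(x)\ \cong\ \colim_{v \in P_{\le n},\, v \le x} F(v)\ =\ T_n F(x),$$
naturally in $x$.

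The dual identity $H_0 \widetilde T^n(\iota_* F) \cong T^n F$ is proved by the same strategy in the injective model structure: the second half of \autoref{lemma:structure_of_cof_repl}, applied to $P^{\le n}$, provides a fibrant replacement $R$ with $R(v)_i = 0$ for $i > 0$; $\Ran_i$ is right Quillen for the injective model structures, so $\Ran_i R$ represents $\widetilde T^n(\iota_* F)$; and on chain complexes concentrated in non-positive degrees $H_0 = \ker(\partial_0)$, which commutes with limits because $\mathrm{ev}_0$ and $\mathrm{ev}_{-1}$ are right adjoints and kernels are limits. The only real technical point, and the main (mild) obstacle, is verifying that the constructions in \autoref{lemma:structure_of_cof_repl} descend to the subposets $P_{\le n}$ and $P^{\le n}$ and yield (co)fibrant objects in the appropriate model structures; this is routine once one unpacks the proof of that lemma.
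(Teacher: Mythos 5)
Your argument is correct and follows essentially the same route as the paper: produce a cofibrant replacement of $\iota_* F|_{P_{\le n}}$ concentrated in non-negative degrees via \autoref{lemma:structure_of_cof_repl}, use the pointwise hocolim formula for $\widetilde T_n$, and observe that $H_0 = \coker(\partial_1)$ on such complexes commutes with colimits. Your only addition is making explicit that the lemma must be applied over the subposet $P_{\le n}$ and that $\Lan_i$ is left Quillen, which the paper leaves implicit.
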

\begin{proof}
    We prove the first statement. The proof of the second is dual.

    Using \autoref{lemma:structure_of_cof_repl}, let $Q$ be a cofibrant resolution of $(\iota_* F)|_{P_{\le n}}$ such that for all $x \in P$ and $i<0$, $Q(x)_i = 0$.
    Then, for all $x \in P$ and $i<0$,
    \begin{equation*}
        (\widetilde T_n (\iota_* F))(x)_i = \underset{y \le x, y \in P_{\le n}}{\colim} Q(y)_i \cong 0.
    \end{equation*}
    
    Hence,
    \begin{align*}
        (H_0 \widetilde T_n F) (x) &\cong \coker\left(\partial_1 \colon \underset{y \le x, y \in P_{\le n}}{\colim} Q(y)_1 \to \underset{y \le x, y \in P_{\le n}}{\colim} Q(y)_0\right) \\
        &\cong \underset{y \le x, y \in P_{\le n}}{\colim} \coker \left(\partial_1 : Q(y)_1 \to Q(y)_0\right) \\
        &\cong \underset{y \le x, y \in P_{\le n}}{\colim} H_0 Q (y) \cong \underset{y \le x, y \in P_{\le n}}{\colim} F (y) \cong (T_n F)(x),
    \end{align*}
    where we use that colimits commutes with colimits (and hence cokernels).
    This concludes the proof.
    
\end{proof}

For the following corollary, recall that a functor into $\Ch_{\Fb}$ is homotopy degree $n$ if and only if it is homotopy codegree $n$.

\begin{corollary}\label{prop:lifting_deg_1_to_hodeg_1}
    If $P$ is a join-factorization lattice and $F \colon P \to \Vecs_{\Fb}$ a codegree $n$ functor, then there exists a homotopy degree $n$ functor $\widehat F \colon P \to \Ch_{\Fb}$ such that $H_1 \widehat F \cong F$.
    
    Dually, if $P$ is a meet-factorization lattice and $F \colon P \to \Vecs_{\Fb}$ a degree $n$ functor, then there exists a homotopy degree $n$ functor $\widehat F \colon P \to \Ch_{\Fb}$ such that $H_0 \widehat F \cong F$.
\end{corollary}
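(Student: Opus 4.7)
The plan is to derive both halves of the corollary by applying \autoref{theoremA} (or its dual) to $\iota_* F$, invoking \autoref{theoremB_cat} to identify the resulting approximation with $F$ at the level of homology, and — in the codegree case — shifting the chain complex by one to move the identification from $H_0$ to $H_1$.

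For the degree case I would set $\widehat F = \widetilde T^n(\iota_* F)$. The dual of \autoref{theoremA} applied to the good model category $\Ch_{\Fb}$ shows that $\widehat F$ is homotopy degree $n$. The second statement of \autoref{prop:H0_of_T1_tilde} gives $H_0 \widehat F \cong T^n F$, and since $F$ is degree $n$ and $P$ is a meet-factorization lattice, the dual of \autoref{theoremB_cat} yields $T^n F \cong F$, so $H_0 \widehat F \cong F$.

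For the codegree case I would start with the homotopy codegree $n$ approximation $\widetilde T_n(\iota_* F)$, which is homotopy codegree $n$ by \autoref{theoremA}. Because $\Ch_{\Fb}$ is a stable model category, every homotopy cocartesian cube is homotopy cartesian, so $\widetilde T_n(\iota_* F)$ is automatically homotopy degree $n$ as well. By the first statement of \autoref{prop:H0_of_T1_tilde} combined with \autoref{theoremB_cat}, we have $H_0 \widetilde T_n(\iota_* F) \cong T_n F \cong F$. To convert this $H_0$ identification into an $H_1$ identification, I would set $\widehat F = \Sigma \widetilde T_n(\iota_* F)$, where $\Sigma$ denotes the degree-shift $(\Sigma C)_k = C_{k-1}$ on unbounded chain complexes. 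Then $H_1 \widehat F \cong H_0 \widetilde T_n(\iota_* F) \cong F$.

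The only nontrivial point is justifying that $\Sigma$ preserves the homotopy degree $n$ property. This follows because $\Sigma$ is an autoequivalence of the stable model category $\Ch_{\Fb}$ (its inverse is the loop functor), so it commutes with all homotopy (co)limits, and in particular sends homotopy (co)cartesian $(n+1)$-cubes to homotopy (co)cartesian $(n+1)$-cubes objectwise. I expect this stability fact, together with care about the orientation of the shift (ensuring $H_1$ after shift corresponds to $H_0$ before shift), to be the main place where one has to be a little careful; the rest of the argument is a direct chaining together of the already-established results \autoref{theoremA}, \autoref{theoremB_cat}, and \autoref{prop:H0_of_T1_tilde}.
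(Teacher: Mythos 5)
Your proof is correct and follows essentially the same route as the paper: take $\widehat F = \widetilde T_n(\iota_* F)$ (resp.\ $\widetilde T^n(\iota_* F)$), use \autoref{theoremA} together with stability of $\Ch_{\Fb}$ to get homotopy degree $n$, and combine \autoref{prop:H0_of_T1_tilde} with \autoref{theoremB_cat} to identify the degree-zero homology with $F$. The only divergence is your suspension step: the paper's own proof simply establishes $H_0\widehat F\cong F$ in the codegree case as well, so the $H_1$ in the statement is almost certainly a typo for $H_0$ (the application in \autoref{thm:main_theorem}, (ii)$\implies$(iii), also needs $H_0$ for both summands); your shift by $\Sigma$ is a legitimate way to honor the literal statement, and your justification that $\Sigma$ preserves homotopy degree $n$ is sound.
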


\begin{proof}
    We show the first case. The second is dual (as a functor in $\Fun(P, \Ch_\Fb)$ is homotopy degree $n$ if and only if it is homotopy codegree $n$).

    Let $\widehat F = \widetilde T_1 (\iota_* F)$. Then, $\widehat F$ is homotopy degree 1, and $H_0 \circ \widehat F \cong T_1 F$ by \autoref{prop:H0_of_T1_tilde}. Furthermore, by \autoref{theoremB_cat}, $T_1 F \cong F$, which concludes the proof.
\end{proof}

\section{Middle exactness and poset cocalculus}

In this section, we study middle exactness in the 2-parameter setting. We establish a connection between middle-exactness and poset cocalculus. As a corollary, we get a new proof of the fact that middle-exact bipersistence modules are block decomposable.

Given two morphisms $f_1 \colon X \to Y$ and $f_2 \colon X \to Z$ in a category, we let $Y \coprod_X Z$ denote the corresponding pushout (if it exists). Similarly, given two morphisms $f_1 \colon Y \to X$ and $f_2 \colon Z \to X$, we let $Y \times_X Z$ denote the corresponding product (if it exists).

\subsection{Properties of middle-exact squares}
\begin{lemma}\label{lemma:me_square_equiv_cond}
    Let $\A$ be an abelian category, and let
    \begin{equation}\label{eq:me_square_cond_lemma}
    \begin{tikzcd}
    A \arrow[r, "\alpha"] \arrow[d, "f"] & B \arrow[d, "g"] \\
    C \arrow[r, "\beta"]           & D,
    \end{tikzcd}
    \end{equation}
    be a square in $\A$.
    The following are equivalent.
    \begin{enumerate}
        \item \eqref{eq:me_square_cond_lemma} is middle-exact.
        \item The canonical map $B \coprod_A C \to D$ is a monomorphism.
        \item The canonical map $A \to B \times_D C$ is an epimorphism.
    \end{enumerate}
\end{lemma}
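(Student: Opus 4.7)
My plan is to reduce all three conditions to the vanishing of a single concrete subquotient. Let $I$ denote the image of the map $\binom{\alpha}{f} \colon A \to B \oplus C$ and let $K$ denote the kernel of $(g, -\beta) \colon B \oplus C \to D$. Commutativity of the square gives $g\alpha - \beta f = 0$, so the composite $A \to B \oplus C \to D$ vanishes, $I \subseteq K$, and the quotient $K/I$ is well-defined. By definition, condition (1) is precisely the statement that $I = K$, i.e., $K/I = 0$.

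Next I would identify the pushout and pullback explicitly. In any abelian category, the pushout $B \coprod_A C$ is realized as $\coker\bigl(\binom{\alpha}{-f}\bigr)$, and the pullback $B \times_D C$ is realized as $\ker(g, -\beta) = K$. The canonical map $B \coprod_A C \to D$ is the map $(g, \beta)$ descended to this cokernel; after identifying images (note that $\Ima\binom{\alpha}{f} = \Ima\binom{\alpha}{-f}$ and $\ker(g, \beta) = \ker(g, -\beta)$), its kernel is canonically $K/I$. Dually, the canonical map $A \to B \times_D C$ is the factorization of $\binom{\alpha}{f}$ through $K$, and its cokernel is also canonically $K/I$ by a quick application of the first isomorphism theorem.

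Both equivalences then follow at once: (2) holds iff the first kernel vanishes, iff $K/I = 0$, iff (1) holds; and (3) holds iff the second cokernel vanishes, iff $K/I = 0$, iff (1) holds. The main thing to get right is the sign bookkeeping between the biproduct description of the middle-exactness complex on the one hand and the cokernel/kernel descriptions of the pushout and pullback on the other; beyond this, the argument is a direct diagram chase using the universal properties of kernels and cokernels in an abelian category.
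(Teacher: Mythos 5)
Your proof is correct and takes essentially the same route as the paper: both identify $B \coprod_A C$ with the cokernel of $A \to B \oplus C$ and $B \times_D C$ with the kernel of $B \oplus C \to D$, realize the middle homology $K/I$ as the kernel of the canonical pushout map and simultaneously as the cokernel of the canonical pullback map, and conclude that all three conditions are equivalent to the vanishing of that single object. The only cosmetic difference is your sign convention (you use $\binom{\alpha}{-f}$ and $(g,\beta)$, the paper uses $\binom{\alpha}{f}$ and $(g,-\beta)$), which you correctly note amounts to an automorphism of $B \oplus C$ and does not affect the isomorphism types of the relevant kernels and cokernels.
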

\begin{proof}
    Consider
    \begin{equation}\label{eq:cplx_square_cond_lemma}
        A \xrightarrow{\begin{pmatrix}
            \alpha \\ f
        \end{pmatrix}} B \oplus C \xrightarrow{\begin{pmatrix}
            g & -\beta
        \end{pmatrix}} D,
    \end{equation}
    and observe that
    \begin{equation*}
        B \coprod_A C \cong \coker\begin{pmatrix}
            \alpha \\ f
        \end{pmatrix},
    \end{equation*}
    and
    \begin{equation*}
        B \times_D C \cong \ker\begin{pmatrix}
            g & -\beta
        \end{pmatrix}.
    \end{equation*}
    By definition, the homology of \eqref{eq:cplx_square_cond_lemma} is defined as
    \begin{equation*}
        \ker {\begin{pmatrix}g & -\beta\end{pmatrix}} \ / \ \Ima{\begin{pmatrix}\alpha \\ f\end{pmatrix}},
    \end{equation*}
    which is isomorphic to
    \begin{equation*}
        \coker \left( A \xrightarrow{\begin{pmatrix}\alpha \\ f\end{pmatrix}} (\ker \begin{pmatrix}g & -\beta\end{pmatrix}) \right),
    \end{equation*}
    and to
    \begin{equation*}
        \ker \left((\coker\begin{pmatrix}\alpha \\ f\end{pmatrix}) \xrightarrow{\begin{pmatrix}g & -\beta\end{pmatrix}} D \right).
    \end{equation*}
    Hence, \eqref{eq:me_square_cond_lemma} is middle-exact if and only if the map $A \to \ker \begin{pmatrix}g & -\beta\end{pmatrix}$ is an epimorphism, and also if and only if the map $\coker\begin{pmatrix}\alpha \\ f\end{pmatrix} \to D$ is a monomorphism.
\end{proof}

\begin{lemma}\label{lemma:me_square_coker_injective}
    Let $\A$ be an abelian category. If
    \begin{center}
    \begin{tikzcd}
    A \arrow[r, "\alpha"] \arrow[d, "f"] & B \arrow[d, "g"] \\
    C \arrow[r, "\beta"]           & D,
    \end{tikzcd}
    \end{center}
    is a middle-exact square in $\A$, then the induced map
    \begin{equation*}
        \bar \beta \colon \coker f \to \coker g
    \end{equation*}
    is a monomorphism, and the induced map
    \begin{equation*}
        \bar \alpha \colon \ker f \to \ker g
    \end{equation*}
    is an epimorphism.
\end{lemma}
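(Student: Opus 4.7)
The plan is to unwind the definition of middle-exactness and read off both assertions by a short diagram chase that uses the exactness of
\[
A \xrightarrow{\begin{pmatrix}\alpha \\ f\end{pmatrix}} B \oplus C \xrightarrow{\begin{pmatrix}g & -\beta\end{pmatrix}} D
\]
at the middle term. Both halves follow symmetrically by constructing an explicit element of the kernel of the right-hand map and invoking middle-exactness to lift it to $A$.

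For the monomorphism claim, I would show that any $c \in C$ with $\beta(c) \in \Ima g$ already lies in $\Ima f$. Writing $\beta(c) = g(b)$ for some $b \in B$, the pair $(b, c) \in B \oplus C$ lies in $\ker \begin{pmatrix}g & -\beta\end{pmatrix}$, so by middle-exactness it equals $(\alpha(a), f(a))$ for some $a \in A$; in particular $c = f(a) \in \Ima f$, so $[c] = 0$ in $\coker f$ and $\bar\beta$ is a monomorphism.

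Dually, for the epimorphism claim, given $b \in \ker g$, the pair $(b, 0) \in B \oplus C$ lies in $\ker \begin{pmatrix}g & -\beta\end{pmatrix}$, so by middle-exactness equals $(\alpha(a), f(a))$ for some $a \in A$. The condition $f(a) = 0$ places $a$ in $\ker f$, and $\alpha(a) = b$ exhibits $b$ as the image of $a$ under $\bar\alpha$, so $\bar\alpha$ is an epimorphism.

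The only potential subtlety is justifying this element-wise reasoning in an arbitrary abelian category. This is handled at once by the Freyd--Mitchell embedding theorem; more intrinsically, one can rephrase the two halves using the pushout and pullback characterizations provided by \autoref{lemma:me_square_equiv_cond}, identifying $\coker f$ with the cokernel of the canonical map $B \to B \coprod_A C$ and using that $B \coprod_A C \hookrightarrow D$ is a monomorphism, and dually using that $A \twoheadrightarrow B \times_D C$ for the epimorphism half. I do not foresee a serious obstacle either way, as the argument is essentially the standard one that a pushout-squared / pullback-squared condition transfers to horizontal (co)kernels.
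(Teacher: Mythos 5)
Your proof is correct, but it takes a genuinely different route from the paper. You argue by a direct element chase (legitimized by Freyd--Mitchell or generalized elements): for the monomorphism you take $c$ with $\beta(c)=g(b)$, observe $(b,c)\in\ker\begin{pmatrix}g & -\beta\end{pmatrix}$, and lift it to $A$ by middle-exactness; for the epimorphism you do the same with $(b,0)$. The paper instead assembles the three vertical complexes
\begin{equation*}
    \bigl(0 \to B \xrightarrow{g} D\bigr) \hookrightarrow \Bigl(A \xrightarrow{\begin{pmatrix}\alpha \\ f\end{pmatrix}} B \oplus C \xrightarrow{\begin{pmatrix}g & -\beta\end{pmatrix}} D\Bigr) \twoheadrightarrow \bigl(A \xrightarrow{f} C \to 0\bigr)
\end{equation*}
into a short exact sequence of chain complexes and reads off both claims from the associated long exact sequence in homology: middle-exactness is exactly the vanishing of $H_1$ of the middle column, and that single vanishing simultaneously forces $\ker f \to \ker g$ to be epi (from the $H_2$--$H_1$ portion) and $\coker f \to \coker g$ to be mono (from the $H_1$--$H_0$ portion). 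The LES argument is more synthetic, works uniformly in any abelian category without an embedding step, and yields both claims at once; your chase is more elementary and makes the lifting mechanism explicit. Both are correct and of comparable length, so this is a matter of taste rather than a gap.
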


\begin{proof}
    Consider the short exact sequence of chain complexes
    \begin{center}
        \begin{tikzcd}[row sep=4em, column sep = 4em]
            0 \arrow[d] \arrow[r]           & A \arrow[r, "="] \arrow[d] \arrow[d, "{\begin{pmatrix}\alpha \\ f\end{pmatrix}}"] & A \arrow[d, "f"] \\
            B \arrow[d, "g"] \arrow[r, "{\begin{pmatrix}1 \\ 0\end{pmatrix}}"] & B \oplus C \arrow[r, "(0 \ 1)"] \arrow[d, "(g \ -\beta)"]       & C \arrow[d]           \\
            D \arrow[r, "="]                & D \arrow[r]                               & 0                    
        \end{tikzcd}
    \end{center}
    By the lemma assumption, the middle column is middle-exact. Hence, from the long exact sequence in homology, we get that the following sequences are short exact:
    \begin{equation*}
        0 \to \ker{\begin{pmatrix}\alpha \\ f\end{pmatrix}} \to \ker f \to \ker g \to 0,
    \end{equation*}
    \begin{equation*}
        0 \to \coker f \to \coker g \to \coker \begin{pmatrix}g & -\beta\end{pmatrix} \to 0.
    \end{equation*}
    The result follows immediately.
\end{proof}

\begin{lemma}\label{lemma:me_and_mono_is_pb}
    Let $\A$ be an abelian category, and let
    \begin{center}
    \begin{tikzcd}
    A \arrow[r, "\alpha"] \arrow[d, "f"] & B \arrow[d, "g"] \\
    C \arrow[r, "\beta"]           & D,
    \end{tikzcd}
    \end{center}
    be a middle-exact square in $\A$.

    If either $\alpha$ or $f$ is a monomorphism, then the square is a pullback.

    Dually, if either $\beta$ or $g$ is an epimorphism, then the square is a pushout.
\end{lemma}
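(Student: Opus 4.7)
My plan is to reduce both halves of the lemma to \autoref{lemma:me_square_equiv_cond}, which reformulates middle-exactness either as the assertion that the canonical map $A \to B \times_D C$ is an epimorphism, or dually as the assertion that the canonical map $B \coprod_A C \to D$ is a monomorphism. Once this reformulation is in hand, each half of the lemma follows by a one-line argument in the opposite direction.

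For the pullback statement, I would apply \autoref{lemma:me_square_equiv_cond} to obtain that the comparison map $\varphi \colon A \to B \times_D C$ is an epimorphism, and then show that the hypothesis forces $\varphi$ to also be a monomorphism. In an abelian category, the kernel of the pairing $(\alpha, f)$ is $\ker \alpha \cap \ker f$, which vanishes as soon as either $\alpha$ or $f$ is a monomorphism. Since a morphism in an abelian category that is both mono and epi is an isomorphism, the isomorphism $A \cong B \times_D C$ then says exactly that the square is a pullback.

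The pushout half is the formal dual. Middle-exactness now gives that $\psi \colon B \coprod_A C \to D$ is a monomorphism, and it remains only to argue that $\psi$ is also an epimorphism. This follows because $g$ factors as $B \to B \coprod_A C \xrightarrow{\psi} D$, and similarly for $\beta$, so if either $g$ or $\beta$ is already an epimorphism then $\psi$ is too, hence an isomorphism. I do not foresee any genuine obstacle, because \autoref{lemma:me_square_equiv_cond} carries all the conceptual content; the remaining input is merely the identification of the kernel of a pairing (resp.\ the cokernel of a copairing) in an abelian category.
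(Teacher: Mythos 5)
Your proof is correct and follows essentially the same route as the paper's. Both arguments amount to observing that middle-exactness identifies $B \times_D C$ with the image of the pairing $\begin{pmatrix}\alpha \\ f\end{pmatrix}$, and that the hypothesis makes this pairing a monomorphism; the paper phrases this as left-exactness of the associated complex, while you phrase it as the comparison map $A \to B \times_D C$ being epi (via \autoref{lemma:me_square_equiv_cond}) and mono (via $\ker\alpha \cap \ker f = 0$), but the content is identical, and your treatment of the dual half likewise matches what a formal dualization of the paper's argument would produce.
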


\begin{proof}
    We prove the first part. The second is dual.
    
    Assume without loss of generality the $\alpha$ is a monomorphism.
    As the square is middle-exact, the associated complex
    \begin{equation*}
        A \xrightarrow{\begin{pmatrix}
            \alpha \\ f
        \end{pmatrix}} B \oplus C \xrightarrow{\begin{pmatrix}
            g & -\beta
        \end{pmatrix}} D,
    \end{equation*}
    is exact in the middle. Furthermore, as $\alpha$ is a monomorphism, so is the map $A \to B \oplus C$.

    Hence, the complex is left exact. Thus,
    \begin{equation*}
        A \cong \ker \left(B \oplus C \xrightarrow{\begin{pmatrix}
            g & -\beta
        \end{pmatrix}} D,\right)
        \cong B \times_D C,
    \end{equation*}
    as desired.
\end{proof}

\subsection{Characterization of middle exactness using poset (co)calculus}

\begin{lemma}\label{lemma:co_limits_preserve_middle_exactness}
    Let $G, F \colon [0, \infty]^2 \to \Vecs_\Fb$ be functors and $\eta \colon G \to F$ a natural transformation. 
    If $G$ is codegree 1, and $F$ is middle-exact, then the functor
    \begin{equation*}
        \coker(G \xrightarrow{\eta} F)
    \end{equation*}
    is middle-exact.

    Dually, if $G$ is middle-exact, and $F$ is degree 1, then the functor
    \begin{equation*}
        \ker(G \xrightarrow{\eta} F)
    \end{equation*}
    is middle-exact.
\end{lemma}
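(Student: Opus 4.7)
The plan is to prove middle-exactness by reinterpreting the defining 3-term sequence at each pair $(x,y)$ as a cochain complex, and then tracking its middle cohomology through a short exact sequence of functors coming from the factorization $G \twoheadrightarrow \Ima\eta \hookrightarrow F$.

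\textbf{Step 1.} For any functor $A \colon [0,\infty]^2 \to \Vecs_\Fb$ and any pair $x, y \in [0,\infty]^2$, define a 3-term cochain complex $C_A^\bullet(x,y)$ by
\begin{equation*}
    C_A^0 = A(x \wedge y), \quad C_A^1 = A(x) \oplus A(y), \quad C_A^2 = A(x \vee y),
\end{equation*}
with differentials being the canonical maps from \autoref{def:middle_exactness}. By definition, $A$ is middle-exact iff $H^1(C_A^\bullet(x,y)) = 0$ for every pair $(x,y)$.

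\textbf{Step 2 (first part).} Factor $\eta$ as $G \twoheadrightarrow I \hookrightarrow F$ with $I = \Ima\eta$, so that $0 \to I \to F \to H \to 0$ is short exact in $\Fun([0,\infty]^2, \Vecs_\Fb)$. Evaluating at each $(x,y)$ yields a short exact sequence of cochain complexes $0 \to C_I^\bullet \to C_F^\bullet \to C_H^\bullet \to 0$, with associated long exact sequence containing
\begin{equation*}
    H^1(C_F^\bullet) \to H^1(C_H^\bullet) \to H^2(C_I^\bullet) \to H^2(C_F^\bullet).
\end{equation*}
Middle-exactness of $F$ gives $H^1(C_F^\bullet) = 0$, so it suffices to show $H^2(C_I^\bullet) = 0$, i.e., that $I(x) \oplus I(y) \to I(x \vee y)$ is an epimorphism. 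Since $G$ is codegree 1, the pairwise cover cube at $(x,y)$ is sent to a cocartesian square, so $G(x) \oplus G(y) \to G(x\vee y)$ is epi. The componentwise surjection $G \twoheadrightarrow I$ then forces $I(x) \oplus I(y) \to I(x \vee y)$ to be epi as well, whence $H^2(C_I^\bullet) = 0$ and $H$ is middle-exact.

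\textbf{Step 3 (dual part).} Set $K = \ker\eta$ and $J = \Ima\eta$; the short exact sequence $0 \to K \to G \to J \to 0$ yields a short exact sequence of cochain complexes $0 \to C_K^\bullet \to C_G^\bullet \to C_J^\bullet \to 0$, with long exact sequence containing
\begin{equation*}
    H^0(C_J^\bullet) \to H^1(C_K^\bullet) \to H^1(C_G^\bullet).
\end{equation*}
Middle-exactness of $G$ gives $H^1(C_G^\bullet) = 0$, so it suffices to show $H^0(C_J^\bullet) = 0$, i.e., that $J(x \wedge y) \to J(x) \oplus J(y)$ is a monomorphism. Since $F$ is degree 1, the pairwise cover cube at $(x,y)$ is sent to a cartesian square, so $F(x \wedge y) \to F(x) \times_{F(x \vee y)} F(y) \hookrightarrow F(x) \oplus F(y)$ is mono. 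The inclusion $J \hookrightarrow F$ then forces $J(x \wedge y) \to J(x) \oplus J(y)$ to be mono as well, whence $H^0(C_J^\bullet) = 0$ and $K$ is middle-exact.

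\textbf{Main obstacle.} The argument is structurally a routine diagram chase once the correct short exact sequence of cochain complexes is identified, so there is no serious obstacle. The only genuinely nontrivial insight is recognizing that the (co)degree 1 hypothesis does exactly the work needed to kill the boundary cohomology terms ($H^2(C_I^\bullet)$ resp.\ $H^0(C_J^\bullet)$) that would otherwise obstruct $H^1$ vanishing. This matches the intuition that codegree 1 controls surjectivity at the ``top'' while degree 1 controls injectivity at the ``bottom'' of the Mayer--Vietoris-type sequence.
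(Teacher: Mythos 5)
Your proof is correct, and it takes a genuinely different route from the paper's. The paper's proof invokes \autoref{lemma:me_square_equiv_cond} to recast middle-exactness of $\coker(\eta)$ as monomorphy of the comparison map $\colim(\textrm{span}) \to \coker(\eta)(p\vee q)$, then uses that colimits commute with cokernels and that codegree 1 collapses $\colim(\textrm{span for } G)$ to $G(p\vee q)$; the conclusion follows by noting that passing to cokernels along a compatible pair of maps preserves the monomorphism supplied by middle-exactness of $F$. By contrast, you route through the image $I = \Ima\eta$ and the short exact sequence $0\to I \to F \to \coker\eta\to 0$ (respectively $0\to\ker\eta\to G\to\Ima\eta\to 0$), reading off the result from the long exact sequence in cohomology of the associated three-term complexes; the (co)degree 1 hypothesis is used only to kill the boundary term $H^2(C_I^\bullet)$ (resp.\ $H^0(C_J^\bullet)$). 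Your version is more self-contained in that it does not need the equivalence of \autoref{lemma:me_square_equiv_cond}, only the raw definition of middle-exactness as vanishing of $H^1$, and it makes the codegree $\Rightarrow$ surjectivity, degree $\Rightarrow$ injectivity dichotomy very transparent; the paper's version is shorter because it leans on the already-established lemma and on the commutativity of colimits, avoiding the detour through the image. Both steps check out, including the small facts you use implicitly: in $\Vecs_\Fb$ a pushout square forces $G(x)\oplus G(y)\twoheadrightarrow G(x\vee y)$, a pullback square forces $F(x\wedge y)\hookrightarrow F(x)\oplus F(y)$, and surjectivity (resp.\ injectivity) of $G\to I$ (resp.\ $J\to F$) transfers the epi (resp.\ mono) across the factorization.
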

\begin{proof}
    We prove the first part. The proof of the second part is dual.
    
    Let $F$ and $G$ be as in the lemma statement.
    By \autoref{lemma:me_square_equiv_cond} and \autoref{def:middle_exactness}, a functor $H \colon [0, \infty]^2 \to \Vecs_\Fb$ is middle-exact if and only if for all $p,q \in [0, \infty]^2$, the map
    \begin{equation*}
        \colim \left(
            \begin{tikzcd}[column sep = scriptsize]
            H(p \wedge q) \arrow[d] \arrow[r] & H(p) \\
            H(q)                     &  
            \end{tikzcd}
        \right) \to H(p \vee q)
    \end{equation*}
    is a split monomorphism.
    
    We have that
    \begin{equation*}
        \colim \left(
            \begin{tikzcd}[column sep = scriptsize]
            \coker(G \to F)(p \wedge q) \arrow[d] \arrow[r] & \coker(G \to F)(p) \\
            \coker(G \to F)(q)                     &  
            \end{tikzcd}
        \right)
    \end{equation*}
    \begin{equation*}
        \cong \quad
        \coker \left( \colim \left(
            \begin{tikzcd}
            G(p \wedge q) \arrow[d] \arrow[r] & G(p) \\
            G(q)                     &  
            \end{tikzcd}
        \right)
        \to
        \colim \left(
            \begin{tikzcd}[column sep = scriptsize]
            F(p \wedge q) \arrow[d] \arrow[r] & F(p) \\
            F(q)                     &  
            \end{tikzcd}
        \right)
        \right)
    \end{equation*}
    \begin{equation*}
        \cong \quad
        \coker \left(G(p \vee q) \to
        \colim \left(
            \begin{tikzcd}
            F(p \wedge q) \arrow[d] \arrow[r] & F(p) \\
            F(q)                     &  
            \end{tikzcd}
        \right)
        \right)
    \end{equation*}
    as $G$ is codegree 1.
    
    Now, as $F$ is middle-exact, the map
    \begin{equation*}
        \colim \left(
            \begin{tikzcd}[column sep = scriptsize]
            F(p \wedge q) \arrow[d] \arrow[r] & F(p) \\
            F(q)                     &  
            \end{tikzcd}
        \right)
        \to F(p \vee q)
    \end{equation*}
    is a monomorphism, and thus the map
    \begin{equation*}
        \coker \left(G(p \vee q) \to
        \colim \left(
            \begin{tikzcd}[column sep = scriptsize]
            F(p \wedge q) \arrow[d] \arrow[r] & F(p) \\
            F(q)                     &  
            \end{tikzcd}
        \right)
        \right)
        \longrightarrow \coker \left(G(p \vee q) \to F(p \vee q)\right)
    \end{equation*}
    is also a monomorphism.
\end{proof}

\begin{definition}\label{def:increasing_functor}
    Let $J$ be a small category. A functor $F \colon J \to \Vecs_\Fb$ is \emph{monomorphic} if $F(f)$ is a monomorphism for every morphism $f$ in $J$.
    Similarly, $F$ is \emph{epimorphic} if $F(f)$ is an epimorphism for every morphism $f$ in $J$.
\end{definition}

\begin{lemma}\label{lemma:me_increasing_is_deg1}
    Let $F \colon [0, \infty]^2 \to \Vecs_\Fb$ be middle-exact.
    If $F$ is monomorphic, then $F$ is degree 1.

    Dually, if $F$ is epimorphic, then $F$ is codegree 1.
\end{lemma}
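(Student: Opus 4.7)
The plan is to derive this directly from \autoref{lemma:me_and_mono_is_pb}. First, I would use \autoref{lem:codecomp_from_bicartesian} to characterize strongly bicartesian $2$-cubes in the distributive lattice $[0,\infty]^2$ as precisely the squares $\X_{x^0, x^1}$ associated to pairwise covers $\{x^0, x^1\}$ of $v = x^0 \vee x^1$, i.e., squares with $x^0 \wedge x^1$ at $\emptyset$, $x^0$ and $x^1$ on the two singleton vertices, and $x^0 \vee x^1$ at the top. Consequently, showing that $F$ is degree $1$ reduces to verifying that, for every $x^0, x^1 \in [0,\infty]^2$, the image under $F$ of this square is cartesian in $\Vecs_\Fb$.

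Next, for any such square, \autoref{def:middle_exactness} (applied with the pair $(x^0, x^1)$) asserts that the applied square is middle-exact. Under the hypothesis that $F$ is monomorphic, both of the maps $F(x^0 \wedge x^1) \to F(x^0)$ and $F(x^0 \wedge x^1) \to F(x^1)$ are monomorphisms, so in particular one of them is. Applying the first half of \autoref{lemma:me_and_mono_is_pb} then yields that the image square is a pullback, i.e., cartesian. This establishes that $F$ is degree $1$.

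The dual statement follows by the same argument with the roles interchanged: when $F$ is epimorphic, the maps $F(x^0) \to F(x^0 \vee x^1)$ and $F(x^1) \to F(x^0 \vee x^1)$ are epimorphisms, so the dual half of \autoref{lemma:me_and_mono_is_pb} forces the image square to be a pushout, hence cocartesian, giving codegree $1$. I do not anticipate any significant obstacle: the statement is essentially a one-step consequence of \autoref{lemma:me_and_mono_is_pb} combined with the lattice characterization of strongly bicartesian cubes, and the only bookkeeping required is matching the indexing conventions of \autoref{def:poset_cube} when reading off the square associated to the pairwise cover $\{x^0, x^1\}$.
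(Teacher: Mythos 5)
Your proposal is correct and follows the same route as the paper, which simply observes that the statement follows directly from \autoref{lemma:me_and_mono_is_pb}; you have filled in the routine bookkeeping (identifying strongly bicartesian $2$-cubes via \autoref{lem:codecomp_from_bicartesian} and matching them to the squares in \autoref{def:middle_exactness}) that the paper leaves implicit.
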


\begin{proof}
    This follows directly from \autoref{lemma:me_and_mono_is_pb}.
%
\end{proof}

\begin{lemma}\label{lemma:me_deg1_coker_increasing}
    Let $F \colon [0, \infty]^2 \to \Vecs_\Fb$ be middle-exact. Then the functor
    \begin{equation*}
        C = \coker \left( T_1 F \to F \right) \colon [0, \infty^2] \to \Vecs_{\Fb}
    \end{equation*}
    is monomorphic and degree 1.

    Dually, the functor
    \begin{equation*}
        K = \ker \left( F \to T^1 F \right) \colon [0, \infty^2] \to \Vecs_{\Fb}
    \end{equation*}
    is epimorphic and codegree 1.
\end{lemma}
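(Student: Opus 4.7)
My proof for $C$ would go through three steps: (a) show $C$ is middle-exact via \autoref{lemma:co_limits_preserve_middle_exactness}; (b) observe that $C$ vanishes on the axes $P_{\le 1}$; and (c) combine (a) and (b) to deduce monomorphicity by middle-exactness of carefully chosen squares with an axis side, after which \autoref{lemma:me_increasing_is_deg1} immediately yields degree 1. The statement for $K$ follows dually.

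For step (a), \autoref{theoremA_cat} gives that $T_1 F$ is codegree 1, and this together with the hypothesis that $F$ is middle-exact feeds into \autoref{lemma:co_limits_preserve_middle_exactness} to produce middle-exactness of $C = \coker(T_1 F \to F)$. Step (b) is a direct calculation: for any $(x, 0) \in P_{\le 1}$ the colimit formula \eqref{eq:T_n_cat} degenerates and the counit $T_1 F(x, 0) \to F(x, 0)$ is an isomorphism, so $C(x, 0) = 0$, and analogously $C(0, y) = 0$.

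Step (c) is the only nontrivial part. Any comparison $(x, y) \le (x', y')$ factors as $(x, y) \le (x', y) \le (x', y')$, so it suffices to establish monomorphicity on horizontal and vertical moves separately. For a horizontal move, I would apply middle-exactness of $C$ to the pair $p = (x, y)$, $q = (x', 0)$, noting that $p \wedge q = (x, 0)$ and $p \vee q = (x', y)$. This yields the middle-exact square
\begin{center}
\begin{tikzcd}
C(x, 0) \arrow[r] \arrow[d] & C(x, y) \arrow[d] \\
C(x', 0) \arrow[r] & C(x', y),
\end{tikzcd}
\end{center}
whose left column vanishes by (b). The pushout $C(x, y) \coprod_{C(x, 0)} C(x', 0)$ therefore collapses to $C(x, y)$, and \autoref{lemma:me_square_equiv_cond}(2) forces $C(x, y) \to C(x', y)$ to be a monomorphism. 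The vertical case is symmetric, using $q = (0, y')$.

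The dual proof for $K$ replaces $T_1 F$ with $T^1 F$, the axes with the ``anti-axes'' $([0,\infty] \times \{\infty\}) \cup (\{\infty\} \times [0,\infty])$, and part (2) of \autoref{lemma:me_square_equiv_cond} with part (3); the relevant comparison pair becomes $p = (x', y)$, $q = (x, \infty)$, so that $p \wedge q = (x, y)$ and $p \vee q = (x', \infty)$. I expect the only real obstacle to be spotting the correct squares in step (c): the whole argument hinges on choosing $(p, q)$ so that one side of the middle-exact square lies in the zero locus of $C$ or $K$, at which point the pushout/pullback characterization of middle-exactness collapses to exactly the desired (co)monomorphism.
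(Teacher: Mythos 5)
Your proposal is correct and follows essentially the same route as the paper's own proof: establish middle-exactness of $C$ via \autoref{lemma:co_limits_preserve_middle_exactness}, observe $C$ vanishes on $P_{\le 1}$, then exploit the middle-exact square whose left edge lies on an axis (the paper also factors a general comparison through a horizontal and a vertical move and tests the square with corners $C(x,0), C(x,y), C(x',0), C(x',y)$), and finally invoke \autoref{lemma:me_increasing_is_deg1}. Your added detail in the dual case (explicitly choosing $q=(x,\infty)$ and using part (3) of \autoref{lemma:me_square_equiv_cond}) matches what "dual" unpacks to.
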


\begin{proof}
    We show the first part. The proof of the second part is dual.

    We first show that $C$ is monomorphic. We will show that for all $(x,y) \in [0,\infty]^2$ and $x' \ge x$, the map $C((x,y) \le (x',y))$ is a monomorphism. One can similarly show that maps of the form $C(x, y \le y')$ are monomorphisms, which will imply that for all $(x,y) \le (x',y')$, the map $C((x,y) \le (x',y')) = C(x,y\le y') \circ C(x\le x', y)$ is a monomorphism.
    
    Observe that it follows from \autoref{lemma:co_limits_preserve_middle_exactness} that $C$ is middle-exact.
    Let $x,y$ and $x'$ be as above, and
    consider the square
    \begin{center}
    \begin{tikzcd}
    C(x, 0) \arrow[d] \arrow[r] & C(x,y) \arrow[d] \\
    C(x', 0) \arrow[r]          & C(x',y).
    \end{tikzcd}
    \end{center}
    As $(x,0)$ and $(x',0)$ are in $P_{\le 1}$, it follows that $C(x,0) \cong C(x',0) \cong 0$. Thus, by middle-exactness, $C((x,y)\le(x',y))$ is a monomorphism.
    
    Hence, by \autoref{lemma:me_increasing_is_deg1}, $C$ is degree 1, which concludes the proof.

%
%
\end{proof}


\begin{lemma}\label{lemma:increasing_deg_1_is_free}
    A codegree 1, p.f.d., epimorphic functor $F \colon [0, \infty]^2 \to \Vecs_\Fb$ is an injective object in $\Fun([0, \infty^2], \Vecs_{\Fb})$.
\end{lemma}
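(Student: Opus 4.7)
The plan is to combine the decomposition result of \autoref{prop:codegree_1_interval_decomposable} with \autoref{lemma:directed_ideal_injective} and the observation that pointwise finite-dimensionality collapses the distinction between direct sums and direct products. Since $F$ is codegree 1 and p.f.d., \autoref{prop:codegree_1_interval_decomposable} gives an isomorphism $F \cong \bigoplus_{\alpha \in \mathcal{B}} \Fb_{I_\alpha}$ where each $I_\alpha \subseteq [0,\infty]^2$ is a death, vertical, or horizontal block.

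First I would argue that the epimorphicity hypothesis forces every $I_\alpha$ to be down-closed. For any $x \le y$ in $[0,\infty]^2$, the transition $F(x \le y)$ decomposes as the direct sum $\bigoplus_\alpha \Fb_{I_\alpha}(x \le y)$. This is surjective only if, for every $\alpha$ with $y \in I_\alpha$, one also has $x \in I_\alpha$; otherwise the corresponding summand would contribute the non-surjective map $0 \to \Fb$ to the direct sum. Hence $y \in I_\alpha$ and $x \le y$ imply $x \in I_\alpha$, which is exactly down-closedness of $I_\alpha$. Combined with the block classification of \autoref{def:blocks}, each $I_\alpha$ thus has the form $J_1 \times J_2$ with $J_1, J_2 \subseteq [0,\infty]$ down-closed (allowing $J_i = [0,\infty]$ to accommodate the vertical/horizontal cases), and such a product is directed: given $(a_1,a_2),(b_1,b_2) \in I_\alpha$, the coordinatewise maximum $(\max(a_1,b_1),\max(a_2,b_2))$ lies in $I_\alpha$ because each $J_i$ is totally ordered and already contains both $a_i$ and $b_i$.

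Applying \autoref{lemma:directed_ideal_injective} to each down-closed directed interval $I_\alpha$ yields that every $\Fb_{I_\alpha}$ is injective in $\Fun([0,\infty]^2, \Vecs_\Fb)$. To conclude, I would invoke pointwise finite-dimensionality: at each $x \in [0,\infty]^2$ only finitely many $\alpha$ satisfy $x \in I_\alpha$, so the canonical morphism
\begin{equation*}
    \bigoplus_\alpha \Fb_{I_\alpha} \longrightarrow \prod_\alpha \Fb_{I_\alpha}
\end{equation*}
is an isomorphism at every object, and hence an isomorphism of functors. Since an arbitrary product of injectives is injective in any abelian category with products, $F \cong \prod_\alpha \Fb_{I_\alpha}$ is injective.

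The main obstacle, I expect, is precisely the passage from direct sum to direct product: in a general Grothendieck abelian category one cannot assert that arbitrary direct sums of injectives remain injective, so the p.f.d. hypothesis is doing essential work by ensuring the sum is locally finite and thus agrees with the product, at which point injectivity follows from the well-behaved interaction between products and the injective extension property.
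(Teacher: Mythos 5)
Your proof is correct, and it reaches the same key structural facts as the paper (each interval in the decomposition is a down-closed directed block, hence each summand is injective by \autoref{lemma:directed_ideal_injective}), but the implementation differs in two ways. First, you invoke \autoref{prop:codegree_1_interval_decomposable} to decompose $F$ itself and then deduce down-closedness of the summand intervals from epimorphicity, whereas the paper instead decomposes the restriction $F|_{P_{\le 1}}$, pins down the shape of those one-dimensional intervals from epimorphicity, and computes $\Lan_i \Fb_I$ explicitly (essentially re-deriving a special case of the proposition you cite, making your route cleaner at that step). Second, and more significantly, the paper observes that the index set $\mathcal{B}$ is \emph{globally finite}: every nonempty down-closed interval contains $(0,0)$, so $|\mathcal{B}| = \dim F(0,0) < \infty$, and then the conclusion follows from the trivial fact that a finite direct sum of injectives is injective. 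You instead argue for local finiteness at each point and upgrade the direct sum to a direct product; this is valid (and in fact slightly more general, since it would tolerate an infinite locally finite index set), but in this particular situation it is a detour --- the same down-closedness you already established forces $\mathcal{B}$ to be finite, so the sum-to-product step is unnecessary. Either way the argument goes through.
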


\begin{proof}
    Let $P = [0, \infty]^2$.
    Recall that $T_1 F$ is defined as the left Kan extension
    \begin{equation*}
        T_1 F = \underset{i}{\Lan} \ F|_{P_{\le 1}},
    \end{equation*}
    where $i$ is the inclusion functor $i \colon P_{\le 1} \hookrightarrow P$.
    Note further that $P_{\le 1} = ([0, \infty] \times \{0\}) \cup (\{0\} \times [0, \infty])$.
    By \autoref{lemma:axises_decomposable}, $F|_{P_{\le 1}}$ is interval decomposable. We write
    \begin{equation*}
        F|_{P_{\le 1}} = \bigoplus_{I \in \mathcal{B}} \Fb_{I}
    \end{equation*}
    As $F$ is epimorphic, each $I \in \mathcal{B}$ is of the form $I = (I_1 \times \{0\}) \cup (\{0\} \times I_2)$, where each $I_i$ is of the form $[0, u_i]$ or $[0, u_i)$ for some $u_i \in [0, \infty]$. Moreover, as $F$ is p.f.d., in particular $F(0)$ is finite dimensional, and hence $|\B|$ is finite. It now suffices to show that $\Lan_{i} (\Fb_I)$ is injective, where $I$ is an interval of this form, as $T_1$ commutes with direct sums, and a finite direct sum of injective modules is injective. 

    A straightforward computation shows that
    \begin{equation*}
        \underset{i}{\Lan} \ \Fb|_{(I_1 \times \{0\}) \cup (\{0\} \times I_2)}
        = \Fb_{I_1 \times I_2},
    \end{equation*}
    which is injective by \autoref{lemma:directed_ideal_injective}.
\end{proof}

\begin{theorem}\label{thm:main_theorem}
    Let $F \colon [0, \infty]^2 \to \Vecs_{\Fb}$ be a p.f.d. functor. The following are equivalent.
    \begin{description}
        \item[(i)] $F$ is middle-exact.
        \item[(ii)] $F$ is the direct sum of a degree 1 functor and a codegree 1 functor.
        \item[(iii)] There exists a homotopy degree 1 functor $\widehat F \colon [0, \infty]^2 \to \Ch_{\Fb}$ such that $F \cong H_0 \circ \widehat F$.
    \end{description}
\end{theorem}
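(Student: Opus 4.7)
The plan is to prove the cycle (ii) $\Rightarrow$ (iii) $\Rightarrow$ (i) $\Rightarrow$ (ii), with (i) $\Rightarrow$ (ii) being the main technical step. For (ii) $\Rightarrow$ (iii), given $F \cong D \oplus C$ with $D$ degree 1 and $C$ codegree 1, I would apply \autoref{prop:lifting_deg_1_to_hodeg_1} and its dual to produce homotopy degree 1 functors $\widehat D, \widehat C \colon [0, \infty]^2 \to \Ch_{\Fb}$ with $H_0 \widehat D \cong D$ and $H_0 \widehat C \cong C$. Then $\widehat F = \widehat D \oplus \widehat C$ is homotopy degree 1, as direct sums of homotopy (co)cartesian cubes are homotopy (co)cartesian, and $H_0 \widehat F \cong F$. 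For (iii) $\Rightarrow$ (i), I would use that $\Ch_{\Fb}$ is stable: the homotopy cocartesian square in $\Ch_{\Fb}$ associated with each strongly bicartesian 2-cube in $[0, \infty]^2$ yields a Mayer--Vietoris long exact sequence in homology, and exactness at the middle $H_0$ term is precisely the middle-exactness condition of \autoref{def:middle_exactness}.

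For the main implication (i) $\Rightarrow$ (ii), let $K = \ker(F \to T^1 F)$. By the dual of \autoref{lemma:me_deg1_coker_increasing}, $K$ is codegree 1 and epimorphic, hence by \autoref{lemma:increasing_deg_1_is_free} it is injective in $\Fun([0, \infty]^2, \Vecs_{\Fb})$. The monomorphism $K \hookrightarrow F$ therefore splits, giving $F \cong K \oplus G$ with $G = F/K$. Since middle-exactness decomposes additively over direct sums (the associated complexes split functorially) and both $F$ and $K$ are middle-exact, $G$ is middle-exact as well. Moreover, $G \cong \Ima(F \to T^1 F)$, yielding a natural monomorphism $\iota \colon G \hookrightarrow T^1 F$.

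The main obstacle will be showing that $G$ is degree 1. For fixed $x, y$, the comparison map $\phi \colon G(x \wedge y) \to G(x) \times_{G(x \vee y)} G(y)$ is an epimorphism by middle-exactness of $G$ and \autoref{lemma:me_square_equiv_cond}. To show it is monic, I would observe that pullbacks of monomorphisms are monomorphisms in an abelian category, so $\iota$ induces a monomorphism $G(x) \times_{G(x \vee y)} G(y) \hookrightarrow T^1 F(x) \times_{T^1 F(x \vee y)} T^1 F(y)$. By the dual of \autoref{theoremA_cat}, $T^1 F$ is degree 1, so the target is canonically identified with $T^1 F(x \wedge y)$, and a quick diagram chase shows that the full composite $G(x \wedge y) \to T^1 F(x \wedge y)$ agrees with $\iota_{x \wedge y}$, which is a monomorphism. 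Hence $\phi$ is monic, and therefore an isomorphism, so $G$ is degree 1. Finally, (ii) $\Rightarrow$ (i) follows as a quick sanity check: codegree 1 and degree 1 functors each satisfy the middle-exactness condition by \autoref{lemma:me_square_equiv_cond} (pushout and pullback squares, respectively), and middle-exactness is closed under direct sums.
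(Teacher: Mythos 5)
Your proof is correct, and the implications (ii) $\Rightarrow$ (iii) $\Rightarrow$ (i) follow essentially the same line as the paper. For the main step (i) $\Rightarrow$ (ii), however, you take a genuinely different and somewhat longer route. The paper first observes that for middle-exact $F$, the canonical map $F \to T^1 F$ is objectwise an \emph{epimorphism}: at each $(x,y)$, this is exactly the map $F(x,y) \to F(x,\infty) \times_{F(\infty,\infty)} F(\infty,y)$, which is epi by \autoref{lemma:me_square_equiv_cond} applied to the square with corners $(x,y),(x,\infty),(\infty,y),(\infty,\infty)$. Hence one gets a short exact sequence $0 \to K \to F \to T^1 F \to 0$, so the complementary summand is literally $T^1 F$, which is degree 1 automatically by the dual of \autoref{theoremA_cat}. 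Your version skips this observation and instead works with $G = \Ima(F \to T^1 F)$, then proves directly that $G$ is degree $1$ via a diagram chase using the monomorphism $G \hookrightarrow T^1 F$ and the fact that $T^1 F$ is degree $1$. The chase is valid: the comparison map $\varphi \colon G(x\wedge y) \to G(x) \times_{G(x \vee y)} G(y)$ is epi by middle-exactness of $G$, and factoring $\iota_{x \wedge y}$ through $\varphi$ and the isomorphism $T^1F(x \wedge y) \cong T^1F(x) \times_{T^1F(x \vee y)} T^1F(y)$ shows $\varphi$ is mono, hence iso. But since $F \to T^1 F$ is in fact surjective, your $G$ equals $T^1 F$, so you are re-deriving something that comes for free once the surjectivity is noted. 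Your approach buys a bit of robustness (it would also work if the map were not surjective), at the cost of an extra argument. Two small points you should make explicit: you need $K$ to be p.f.d.\ to invoke \autoref{lemma:increasing_deg_1_is_free}, which holds because $K$ is a subfunctor of the p.f.d.\ functor $F$; and \autoref{prop:lifting_deg_1_to_hodeg_1} already contains both the codegree and degree cases in its statement, so there is no separate ``dual'' to cite.
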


\begin{proof}
\begin{description}
    \item[(i) $\implies$ (ii)] Let $F \colon [0, \infty]^2 \to \Vecs_{\Fb}$ be middle-exact.

    Let $K = \ker(T_1 F \to F)$.
    Observe that for each $(x,y) \in [0, \infty]^2$, $F(x, y) \to T^1 F(x,y)$ is the map
    \begin{equation*}
        F(x,y) \to \lim \left(
            \begin{tikzcd}
            & F(x,\infty) \arrow[d] \\
            F(\infty,y) \arrow[r] & F(\infty,\infty)  
            \end{tikzcd}
        \right),
    \end{equation*}
    which is an epimorphism by \autoref{lemma:me_square_equiv_cond}.
    We therefore get a short exact sequence in the abelian category $\Fun([0, \infty]^2, \Vecs_\Fb)$,
    \begin{equation*}
        0 \to K \to F \to T^1 F \to 0.
    \end{equation*}
    By \autoref{lemma:me_deg1_coker_increasing}, $K$ is codegree 1 and epimorphic. Moreover, $K$ is p.f.d. as $F$ is.
    Furthermore, by \autoref{lemma:increasing_deg_1_is_free}, $K$ is an injective, so the short exact sequence splits.
    In total, we get that
    \begin{equation*}
        F \cong T^1 F \oplus K,
    \end{equation*}
    as desired.

    \item[(ii) $\implies$ (iii)]
    This follows directly from \autoref{prop:lifting_deg_1_to_hodeg_1} and the fact that $H_0$ is an additive functor.
    
    \item[(iii) $\implies$ (i)]
    Let $\widehat F \colon [0, \infty]^2 \to \Ch_{\Fb}$ be homotopy degree 1.
    Let $p, q \in [0, \infty]^2$. As the square
    \begin{center}
    \begin{tikzcd}
    \widehat F(p \wedge q) \arrow[r] \arrow[d] & \widehat F(p) \arrow[d] \\
    \widehat F(q) \arrow[r] & \widehat F(p \vee q)
    \end{tikzcd}
    \end{center}
    is a homotopy pushout, we get from the long exact sequence in homology that
    \begin{equation*}
        H_0(\widehat F(p \wedge q)) \to H_0(\widehat F(p)) \oplus H_0(\widehat F(q)) \to H_0(\widehat F(p \vee q))
    \end{equation*}
    is middle-exact.
\end{description}

\end{proof}

We now show how \autoref{thm:main_theorem} leads to an alternative proof of the fact that middle-exact persistence modules over $[0, \infty]^2$ are interval decomposable \cite[Theorem 1.3]{botnanMiddleExactness}. Recall the definition of blocks from \autoref{def:blocks}.

\begin{corollary}
    Let $F \colon [0, \infty]^2 \to \Vecs_{\Fb}$ be a p.f.d. middle-exact functor. Then $F$ decomposes into a direct sum of interval modules over blocks.
\end{corollary}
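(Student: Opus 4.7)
The plan is to deduce this corollary essentially immediately by combining \autoref{thm:main_theorem} with \autoref{prop:codegree_1_interval_decomposable}. First I would invoke the implication (i) $\implies$ (ii) of \autoref{thm:main_theorem}: since $F$ is p.f.d.\ and middle-exact, there exist functors $A, B \colon [0,\infty]^2 \to \Vecs_{\Fb}$ such that $F \cong A \oplus B$, with $A$ degree 1 and $B$ codegree 1. Inspecting the construction in the proof of \autoref{thm:main_theorem}, $A = T^1 F$ and $B = K = \ker(F \to T^1 F)$, so both $A$ and $B$ are obtained from $F$ via kernels and limits over finite diagrams in $\Vecs_{\Fb}$; in particular they are p.f.d.\ since $F$ is. (Alternatively, since $F \cong A \oplus B$, each summand is pointwise a subspace of $F$ and hence finite-dimensional.)

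Next I would apply \autoref{prop:codegree_1_interval_decomposable} separately to each summand. Since $B$ is a codegree 1 p.f.d.\ functor on $[0,\infty]^2$, the proposition gives
\begin{equation*}
    B \cong \bigoplus_{I \in \mathcal{B}_1} \Fb_I,
\end{equation*}
where each $I \in \mathcal{B}_1$ is a death block, a vertical block, or a horizontal block. Dually, since $A$ is a degree 1 p.f.d.\ functor on $[0,\infty]^2$, the same proposition gives
\begin{equation*}
    A \cong \bigoplus_{I \in \mathcal{B}_2} \Fb_I,
\end{equation*}
where each $I \in \mathcal{B}_2$ is a birth block, a vertical block, or a horizontal block. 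Combining the two decompositions yields
\begin{equation*}
    F \cong A \oplus B \cong \bigoplus_{I \in \mathcal{B}_1 \cup \mathcal{B}_2} \Fb_I,
\end{equation*}
exhibiting $F$ as a direct sum of interval modules over blocks, as required.

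There is really no significant obstacle here; the corollary is a two-line consequence of the two results already established. The only point worth checking carefully is that the summands $A$ and $B$ inherit pointwise finite-dimensionality from $F$, which is immediate as noted above, so that \autoref{prop:codegree_1_interval_decomposable} applies to them.
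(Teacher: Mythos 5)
Your proof is correct and matches the paper's own argument: the paper likewise invokes \autoref{thm:main_theorem} to split $F$ into a degree 1 summand and a codegree 1 summand, then applies \autoref{prop:codegree_1_interval_decomposable} to each. Your additional remark about the summands inheriting pointwise finite-dimensionality is a sound sanity check that the paper leaves implicit.
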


\begin{proof}
    By \autoref{thm:main_theorem}, $F$ is a direct sum of a codegree 1 functor and a degree 1 functor. The result now follows from \autoref{prop:codegree_1_interval_decomposable}.
\end{proof}

\section{Relation to the Reedy model structure}

In this section, we apply the \emph{Reedy model structure} on functor categories to establish a characterization of projective and injective objects in $\Fun(P, \Vecs_{\Fb})$ for a poset $P$ (under certain conditions on the poset).
We then give an example illustrating how this model structure is connected to middle-exact bipersistence modules.

\subsection{Notation and preliminaries on posets}

For two elements $x,y$ in a poset $P$, we say that $y$ \emph{covers} $x$ if $y > x$ and that there exists no $z$ with $y > z > x$. We denote this by $y \succ x$. If $y$ covers $x$, we say that $x$ is a \emph{parent} of $y$.

%
%

\begin{lemma}\label{lemma:join_of_parents}
    Let $P$ be a lattice, and let $u \in P$. If $a,b \prec u$ with $a \neq b$, then $a \vee b = u$.
\end{lemma}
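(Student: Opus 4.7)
The plan is to argue by contradiction, exploiting the definition of the covering relation $\prec$ (namely, that no element sits strictly between a parent and its child). Since $a, b \le u$, the join $a \vee b$ exists in the lattice and satisfies $a \vee b \le u$. Suppose toward contradiction that $a \vee b \neq u$, so that $a \vee b < u$.

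Consider the element $a \vee b$ as sitting above $a$. We have $a \le a \vee b < u$, so either $a = a \vee b$ or $a < a \vee b < u$. The latter is impossible because $a \prec u$ leaves no room for an element strictly between $a$ and $u$. Hence $a = a \vee b$, which forces $b \le a$. By the symmetric argument applied to $b$, we also get $a \le b$. Combining these two inequalities gives $a = b$, contradicting the hypothesis $a \neq b$.

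Therefore $a \vee b = u$, as desired. The only real content is unpacking the covering relation on both sides; there is no significant obstacle, and the argument uses nothing beyond the definitions of $\prec$ and $\vee$ together with antisymmetry of the partial order, so it does not even require distributivity of the lattice.
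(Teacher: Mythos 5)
Your proof is correct and uses essentially the same observation as the paper: since $a \prec u$ and $a \le a \vee b \le u$, the join $a \vee b$ must equal $a$ or $u$, and the case $a \vee b = a$ (equivalently $b \le a$) is ruled out. The only cosmetic difference is that you rule it out by a symmetric argument plus antisymmetry, whereas the paper applies $b \prec u$ directly to the chain $b \le a < u$; both are equally elementary.
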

\begin{proof}
    As $a \le a \vee b \le u$, either $a \vee b = a$ or $a \vee b = u$. If $a \vee b = a$, then $b \le a < u$, so either $a = b$ or $b < a < u$, both of which contradict the assumptions.
\end{proof}
Remark that \autoref{lemma:join_of_parents} in particular implies that join-irreducible elements have at most one parent.

For an element $a \in P$, let $a^\downarrow$ denote the interval $a^\downarrow = \{x \in P : x \le a\}$.
Let $a^\uparrow$ denote the interval $a^\uparrow = \{x \in P : x \ge a\}$. 
\begin{definition}
    A poset $P$ is said to be \emph{down-finite} if, for every $a \in P$, $a^\downarrow$ is finite.
    
    A poset $P$ is said to be \emph{up-finite} if, for every $a \in P$, $a^\uparrow$ is finite.
\end{definition}
Observe that down-finite posets satisfy the descending chain condition, and up-finite posets satisfy the ascending chain condition.

\begin{example}
    Any finite poset is down-finite. The poset $(\Nn, \le)$ is down-finite, but not finite. The poset $(\Nn \cup \{\infty\}, \le)$, defined by adding a greatest element $\infty$ to $(\Nn, \le)$, satisfies the descending chain condition, but is not down-finite.
\end{example}

\begin{lemma}\label{lemma:num_of_parents}
    Let $P$ be a down-finite distributive lattice, and let $x \in P$. Then $x$ has exactly $\jdim(x)$ parents.
\end{lemma}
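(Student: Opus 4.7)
The plan is to exhibit a bijection between the parents of $x$ and a canonical subset of the join-irreducibles, and then show this subset has size $\jdim(x)$. Throughout, let $J(x) = \{p \in P : p \text{ is join-irreducible and } p \le x\}$ (finite by down-finiteness), and let $M(x) \subseteq J(x)$ denote its set of maximal elements. The case where $x$ is minimal is trivial, since then $\jdim(x) = 0$ and $x$ has no parents, so I assume $x$ is not minimal.

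The first step is to show that $M(x) = \{p^0, \dots, p^{k-1}\}$, the reduced indecomposable join-decomposition of $x$ guaranteed by \autoref{thm:Birkhoff} (down-finiteness implies the descending chain condition). The key tool is the standard fact that if $p$ is join-irreducible in a distributive lattice and $p \le q^1 \vee \dots \vee q^m$, then $p \le q^i$ for some $i$; this follows by writing $p = p \wedge (q^1 \vee \dots \vee q^m) = (p \wedge q^1) \vee \dots \vee (p \wedge q^m)$ via distributivity and then applying join-irreducibility. Using this, each $p^j$ must be maximal in $J(x)$ (otherwise $p^j < q$ for some $q \in J(x)$ forces $q \le p^l$ for some $l$, and $l \neq j$ combined with $p^j < q \le p^l$ contradicts reducedness), and any $p \in M(x)$ satisfies $p \le p^j$ for some $j$, hence $p = p^j$ by maximality. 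Thus $|M(x)| = \jdim(x)$.

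Next, for each $p \in M(x)$, I would set $z_p = \bigvee (J(x) \setminus \{p\})$ (interpreted as the minimum of $x^{\downarrow}$ in the degenerate case $J(x) = \{p\}$), and prove that $p \mapsto z_p$ is a bijection from $M(x)$ onto the set of parents of $x$. The strict inequality $z_p < x$ follows from the key lemma: if $z_p = x$, then $p \le z_p$ yields $p \le q$ for some $q \in J(x) \setminus \{p\}$, contradicting $p \in M(x)$. Parent-ness of $z_p$ follows by observing that any $w$ with $z_p < w < x$ would satisfy $J(x) \setminus \{p\} \subseteq J(w) \subsetneq J(x)$, forcing $J(w) = J(x) \setminus \{p\}$ and thus $w = \bigvee J(w) = z_p$. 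Injectivity holds because, for distinct $p, p' \in M(x)$, we have $p' \le z_p$ but $p' \not\le z_{p'}$ (again by the key lemma combined with the maximality of $p'$).

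The main obstacle, and the step I would save for last, is surjectivity. Given a parent $a$ of $x$, the natural move is to choose $p$ maximal in the finite nonempty set $J(x) \setminus J(a)$, and then verify that $p$ is actually maximal in all of $J(x)$. The key tension is that if $q \in J(x)$ lies strictly above $p$, then $q$ must belong to $J(a)$ by the maximality of $p$ in $J(x) \setminus J(a)$, i.e., $q \le a$; combined with $p < q$ this forces $p \le a$, contradicting $p \notin J(a)$. Once $p \in M(x)$ is secured, the containment $a \le z_p$ (which follows from $J(a) \subseteq J(x) \setminus \{p\}$) together with the parent property of $a$ and the strict inequality $z_p < x$ force $a = z_p$, completing the bijection.
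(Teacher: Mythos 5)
Your proof is correct, and it takes a genuinely different route from the paper's: the paper disposes of this lemma by citing two external results (Lemma 4.20 of the poset cocalculus paper and Proposition 5.8 of the paper on realisations of posets and tameness), whereas you give a self-contained Birkhoff-style argument. Your bijection $p \mapsto z_p = \bigvee(J(x)\setminus\{p\})$ between the maximal join-irreducibles $M(x)$ below $x$ and the parents of $x$ is sound: Step 1 correctly identifies $M(x)$ with the unique reduced indecomposable join-decomposition via the primeness property of join-irreducibles in a distributive lattice, and the surjectivity argument (choosing $p$ maximal in $J(x)\setminus J(a)$ and pushing it up to maximality in $J(x)$) is exactly the right move and is the genuinely nontrivial step. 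Two small remarks. First, you implicitly use that every element equals the join of its join-irreducibles below it (to conclude $w = \bigvee J(w) = z_p$); this is fine, since it follows from the cited Birkhoff theorem together with your Step 1 applied to $w$, but it is worth flagging. Second, with the paper's definition of join-irreducible (which, being vacuous for minimal elements, makes the bottom element join-irreducible), the set $J(x)\setminus\{p\}$ always contains the bottom element for non-minimal $x$, so your degenerate-case convention is never actually invoked; it only matters under the more common convention that excludes the bottom element, which seems to be the one you had in mind. Either way the argument goes through. The trade-off between your approach and the paper's is the usual one: your version is elementary and keeps the paper self-contained at the cost of a page of lattice combinatorics, while the paper's version keeps the exposition short by outsourcing to references.
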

\begin{proof}
    This follows from \cite[Lemma 4.20]{hem2025posetfunctorcocalculusapplications} and \cite[Proposition 5.8]{realisationsposetstameness}.
\end{proof}

\subsection{The Reedy model structure and projective objects}

Let $P$ be a finite poset.
We can equip $\Vecs_{\Fb}$ with the model structure where the cofibrations are monomorphisms, fibrations are epimorphisms and weak equivalences are all morphisms.
As $P$ satisfies the descending chain condition, we have a Reedy model structure on $\Fun(P, \Vecs_{\Fb})$ that coincides with the projective model structure (see, e.g., \cite[Chapter 5.2]{hovey}), which we denote $\projPvec$. In $\projPvec$, all morphisms are weak equivalences, the fibrations are the objectwise epimorphisms, and the cofibrations are the natural transformations $\eta \colon F \to G$ such that for all $x \in P$, the \emph{latching map}
\begin{equation}\label{eq:latching_map}
    L_{x} G \coprod_{L_{x} F} F(x) \to G(x)
\end{equation}
is a monomorphism, where $L_{x} F$ is the latching object of $F$ at $x$ and is given by
\begin{equation}\label{eq:latching_ob}
    L_{x} F = \underset{y < x}{\colim} F(y).
\end{equation}
Observe that the cofibrant objects are precisely the functors that have the left lifting property against objectwise epimorphisms. In other words, the cofibrant objects are the projective objects in $\Fun(P, \Vecs_{\Fb})$.
Note further that $F \in \projPvec$ is cofibrant if and only if
\begin{equation}\label{eq:latching_map_cofibrant}
    L_{x} F \to F(x)
\end{equation}
is a monomorphism for all $x \in P$.

A \emph{meet-sublattice} $Q$ of a lattice $P$ is a full subposet $Q \subset P$ such that for any $x, y \in Q$, $x \wedge_P y \in Q$, where $\wedge_p$ is the meet operation in $P$.
\begin{lemma}\label{lemma:restricting_cofibrant}
    Let $P$ be a lattice satisfying the descending chain condition, and $Q \subset P$ a finite meet-sublattice.
    If $F$ is a projective object in $\Fun(P, \Vecs_{\Fb})$, then the restriction $F|_Q$ is a projective object in $\Fun(Q, \Vecs_\Fb)$.
\end{lemma}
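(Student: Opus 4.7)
The plan is to use the adjunction $i^* \dashv \Ran_i$, where $i \colon Q \hookrightarrow P$ is the inclusion and $i^* = (-)|_Q$ is restriction. Since projectivity in a functor category into $\Vecs_{\Fb}$ is equivalent to having the left lifting property against objectwise epimorphisms, I would show that $i^*$ preserves projectives by showing that $\Ran_i$ preserves objectwise epis. Concretely, a lifting problem in $\Fun(Q, \Vecs_{\Fb})$ against an epi $G \twoheadrightarrow H$ with given map $F|_Q \to H$ transposes under the adjunction to a lifting problem in $\Fun(P, \Vecs_{\Fb})$ against $\Ran_i G \to \Ran_i H$ with given map $F \to \Ran_i H$, which is solvable by projectivity of $F$ as soon as $\Ran_i$ is known to preserve epis.

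To show that $\Ran_i$ preserves epis, I would compute pointwise using the standard formula
\begin{equation*}
    (\Ran_i G)(p) = \lim_{q \in Q,\, q \ge p} G(q).
\end{equation*}
Set $J_p = \{q \in Q : q \ge p\}$. The key step is that under the lemma's hypotheses, $J_p$ is either empty or has a minimum element. Indeed, since $Q$ is a meet-sublattice, if $q_1, q_2 \in J_p$ then $q_1 \wedge q_2 \in Q$, and $q_1 \wedge q_2 \ge p$ because $p$ is a lower bound of $\{q_1, q_2\}$; hence $J_p$ is closed under pairwise meets. Finiteness of $Q$ then yields a minimum $q_p = \bigwedge J_p \in J_p$ whenever $J_p$ is nonempty. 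A limit over a poset with a minimum reduces to evaluation at that minimum, so $(\Ran_i G)(p)$ equals either $0$ or $G(q_p)$, and the induced map $(\Ran_i G)(p) \to (\Ran_i H)(p)$ is accordingly $0 \to 0$ or $G(q_p) \to H(q_p)$, both of which are epimorphisms.

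The main point is identifying that the meet-sublattice assumption, together with finiteness of $Q$, is exactly the structural property that makes each comma category $J_p$ have an initial object, which in turn makes $\Ran_i$ exact on epimorphisms. Once this observation is in place, the rest of the argument is formal manipulation via the adjunction and the definition of projectivity.
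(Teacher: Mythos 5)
Your proof is correct, and it rests on the same key structural observation as the paper's argument, but packages it differently. The paper restricts to $P' = m^\downarrow$ where $m = \max Q$, constructs the reflection $r \colon P' \to Q$, $x \mapsto \bigwedge_{u \in Q,\, u \ge x} u$ (left adjoint to the inclusion $i$), and concludes via the Quillen adjunction $i^* \dashv r^*$ in the Reedy/projective model structure on $\Fun(P', \Vecs_\Fb)$ that $i^*$ preserves cofibrant, hence projective, objects. You instead stay at the purely categorical level: you transpose lifting problems across $i^* \dashv \Ran_i$ and verify directly that $\Ran_i$ preserves objectwise epimorphisms, because the comma poset $J_p = \{q \in Q : q \ge p\}$ is meet-closed and finite, hence empty or with a minimum $q_p$, so $(\Ran_i G)(p)$ is $0$ or $G(q_p)$. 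These are tightly related --- on $P'$ your $\Ran_i$ is exactly the paper's $r^*$ --- but your route bypasses the Reedy model structure entirely, works directly on $P$ rather than $P'$, and is more elementary and self-contained; the paper's version has the advantage of fitting the lemma into its surrounding Reedy-model-structure narrative and reusing that machinery.
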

\begin{proof}
    Let $m$ be the maximal element in $Q$ and let $P' = \{x \in P : x \le m\}$, viewed as a subposet of $P$. By inspecting equations \eqref{eq:latching_ob} and \eqref{eq:latching_map_cofibrant}, we see that $F|_{P'}$ is a cofibrant object in $\Fun_{\textrm{proj}}(P', \Vecs_\Fb)$.

    Let $i$ denote the inclusion $i : Q \hookrightarrow P'$. Let
    \begin{align*}
        r \colon &P' \to Q \\
        &x \mapsto \bigwedge_{u \in Q, u \ge x} u,
    \end{align*}
    which is well-defined as $Q$ is a sublattice.
    Then $ri(u) = u$ for all $u \in Q$, and $ir(x) \ge x$ for all $x \in P'$ (as $Q$ contains the maximal element in $P'$).
    Hence, $r \dashv i$ forms an adjoint pair, and thus $i^* \colon \Fun(P', \Vecs_{\Fb}) \to \Fun(Q, \Vecs_\Fb)$ is left Quillen adjoint to $r^*$. Thus, $F|_Q = F|_{P'} \circ i$ is cofibrant, and hence a projective object in $\Fun(Q, \Vecs_\Fb)$.
\end{proof}

\begin{lemma}\label{lemma:projective_is_deg1}
    Let $P$ be a lattice satisfying the descending chain condition.
    If $F$ is a projective object in $\Fun(P, \Vecs_\Fb)$, then $F$ is monomorphic and degree 1.
\end{lemma}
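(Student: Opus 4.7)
The strategy is to restrict $F$ to suitable finite meet-sublattices $Q$ of $P$ via \autoref{lemma:restricting_cofibrant}, and then exploit the Reedy characterization of cofibrant (equivalently, projective) objects in $\Fun(Q, \Vecs_\Fb)$, namely that the latching map $L_x(F|_Q) \to F|_Q(x)$ is a monomorphism at every $x \in Q$. Both monomorphicity and the degree 1 condition will then arise directly from this latching condition at the top element of an appropriately chosen sublattice.

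For monomorphicity, fix $x \le y$ in $P$ and let $Q = \{x, y\}$. Since $x \wedge y = x \in Q$, the set $Q$ is a finite meet-sublattice of $P$ with maximal element $y$, so $F|_Q$ is projective, hence Reedy cofibrant, by \autoref{lemma:restricting_cofibrant}. The latching object at $y$ equals $L_y(F|_Q) = F(x)$, so the latching map is precisely $F(x) \to F(y)$, which must be a monomorphism. Thus $F$ is monomorphic.

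For degree 1, let $\X \colon \Po_2 \to P$ be any strongly bicartesian $2$-cube. By \autoref{lem:codecomp_from_bicartesian}, $\X = \X_{x^0, x^1}$ for some pairwise cover $x^0, x^1$ of $v := x^0 \vee x^1$, and its vertices form the set $Q = \{x^0 \wedge x^1, x^0, x^1, v\}$. The absorption laws show $Q$ is a finite meet-sublattice of $P$ with maximal element $v$, so \autoref{lemma:restricting_cofibrant} again gives that $F|_Q$ is cofibrant. The latching object at $v$ is
\begin{equation*}
    L_v(F|_Q) = F(x^0) \coprod_{F(x^0 \wedge x^1)} F(x^1),
\end{equation*}
and cofibrancy yields that the canonical map $L_v(F|_Q) \to F(v)$ is a monomorphism. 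By \autoref{lemma:me_square_equiv_cond}, the square $F \circ \X$ is then middle-exact. Since $F$ is monomorphic, $F(x^0 \wedge x^1) \to F(x^0)$ is a monomorphism, so \autoref{lemma:me_and_mono_is_pb} upgrades middle-exactness to cartesianness, showing that $F$ is degree 1.

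The main conceptual step is the translation from Reedy cofibrancy at a pushout vertex to middle-exactness via \autoref{lemma:me_square_equiv_cond}; once this bridge is in place, the monomorphicity established in the first part supplies exactly the extra hypothesis needed by \autoref{lemma:me_and_mono_is_pb} to promote middle-exactness to a genuine pullback. Identifying the correct meet-sublattices (a $2$-chain for monomorphicity, a bicartesian square for degree 1) is the only other input.
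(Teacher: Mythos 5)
Your proof is correct and takes essentially the same route as the paper: restrict to the two-element chain for monomorphicity and to the four-vertex bicartesian square for degree 1, read off the latching maps via \autoref{lemma:restricting_cofibrant}, and combine \autoref{lemma:me_square_equiv_cond} with \autoref{lemma:me_and_mono_is_pb}. The only cosmetic difference is that you begin from an abstract strongly bicartesian $2$-cube and invoke \autoref{lem:codecomp_from_bicartesian}, whereas the paper starts from an arbitrary pair $x, y \in P$ and forms $Q = \{x \wedge y, x, y, x \vee y\}$ directly.
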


\begin{proof}
    We first prove that $F$ is monomorphic. Let $x, y \in P$ with $y \le x$. Let $Q = \{x,y\} \subset P$. By \autoref{lemma:restricting_cofibrant}, $F|_Q$ is a projective object in $\Fun(Q, \Vecs_{\Fb})$. Applying \eqref{eq:latching_map_cofibrant}, we get that
    \begin{equation*}
        F(y) \to F(x)
    \end{equation*}
    is a monomorphism. Hence, $F$ is monomorphic.

    Now, let $x, y \in P$, and consider the sublattice $Q = \{x,y,x\wedge y, x \vee y\}$. By \autoref{lemma:restricting_cofibrant}, $F|_Q$ is a projective object in $\Fun(Q, \Vecs_{\Fb})$. Applying \eqref{eq:latching_map_cofibrant}, we get that
    \begin{equation*}
        \colim \left(
            \begin{tikzcd}[column sep = scriptsize]
            F(x \wedge y) \arrow[d] \arrow[r] & F(x) \\
            F(y)                     &  
            \end{tikzcd}
        \right) \to F(x \vee y)
    \end{equation*}
    is a monomorphism.
    Thus, the square
    \begin{equation*}
        \begin{tikzcd}[column sep = scriptsize]
            F(x \wedge y) \arrow[d] \arrow[r] & F(x) \arrow[d] \\
            F(y)   \arrow[r]     &  F (x \vee y)
        \end{tikzcd}
    \end{equation*}
    is middle-exact, by \autoref{lemma:me_square_equiv_cond}.
    Furthermore, as $F$ is monomorphic, the square is a pullback, by \autoref{lemma:me_and_mono_is_pb}.
    Thus, $F$ is degree 1.
\end{proof}

\begin{lemma}\label{lemma:latching_ob_cube_dim1}
    Let $P$ be a down-finite distributive lattice, $\C$ a cocomplete category and $F \colon P \to \C$ a functor.
    Let $x \in P$ with $\jdim(x) = 1$. Then,
    \begin{equation*}
        L_x F \cong F(y),
    \end{equation*}
    where $y$ is the unique parent of $x$.
\end{lemma}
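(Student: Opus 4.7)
The plan is to identify the indexing diagram of the latching object as having a maximum, and conclude the colimit is just the value at that maximum.

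First I would unpack the hypothesis $\jdim(x) = 1$. By \autoref{def:dim}, this means $x$ is non-minimal and is itself join-irreducible (its indecomposable reduced join-decomposition of size 1 is $x = x$). By \autoref{lemma:num_of_parents}, $x$ therefore has exactly one parent, which I'll call $y$.

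The key step is to show that $y$ is the maximum element of $S_x := \{z \in P : z < x\}$, so that the diagram defining $L_x F$ has a terminal object and the colimit collapses to $F(y)$. Let $z \in S_x$. Consider $z \vee y$, which exists because $P$ is a lattice. Since $z, y \le x$, we have $z \vee y \le x$. If $z \vee y = x$, then $x$ would be written as a join of two elements both strictly below it, contradicting the fact that $x$ is join-irreducible. Hence $z \vee y < x$. But $y \le z \vee y < x$, and since $y$ is a parent of $x$ there is no element strictly between $y$ and $x$, so $z \vee y = y$, i.e.\ $z \le y$. Thus $y$ is the greatest element of $S_x$.

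With $y$ terminal in $S_x$, the constant cocone on $F(y)$ with components $F(z \le y) \colon F(z) \to F(y)$ is initial among cocones on $F|_{S_x}$, so
\begin{equation*}
    L_x F = \underset{z < x}{\colim}\, F(z) \cong F(y),
\end{equation*}
as desired. I do not anticipate a substantive obstacle here; the only subtlety is making sure that $\jdim(x) = 1$ is correctly translated into join-irreducibility of $x$ (using the convention in \autoref{def:dim} that allows the trivial decomposition $x = x$), and then combining \autoref{lemma:join_of_parents} with \autoref{lemma:num_of_parents} to get uniqueness of the parent.
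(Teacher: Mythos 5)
Your proof is correct and follows essentially the same approach as the paper: identify the unique parent $y$ via \autoref{lemma:num_of_parents} and recognize it as a terminal object of the indexing diagram $\{z < x\}$, so that the latching colimit collapses to $F(y)$. In fact, you are slightly more careful than the paper's proof, which merely asserts that $y$ is ``maximal'' in $\{u < x\}$; your join-irreducibility argument showing that every $z < x$ satisfies $z \le y$ (i.e., $y$ is the greatest element, hence terminal) is exactly the step needed to justify that the colimit is $F(y)$.
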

\begin{proof}
    Note that $x$ has exactly one parent, by \autoref{lemma:num_of_parents}. Let $y \prec x$ be the parent.
    Then $y$ is maximal in $\{u \in P : u < x\}$. Thus,
    \begin{equation*}
        L_x F \cong \underset{u < x}{\colim} F(u) \cong F(y).
    \end{equation*}
\end{proof}

\begin{lemma}\label{lemma:latching_ob_cube}
    Let $P$ be a down-finite distributive lattice, $\C$ a cocomplete category and $F \colon P \to \C$ a functor.
    Let $x \in P$ with $\jdim(x) = n \ge 2$, and let $u_0, \dots, u_{n-1}$ be the parents of $x$. Then,
    \begin{equation*}
        L_x F \cong \underset{S \subsetneq [n-1]}{\colim} F \circ \X_{u_0, \dots, u_{n-1}}.
    \end{equation*}
\end{lemma}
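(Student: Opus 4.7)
The plan is to recognize the restriction of the cube $\X = \X_{u_0, \dots, u_{n-1}}$ to $\Po_n \setminus \{[n-1]\}$ as a cofinal subdiagram of the latching diagram $u \mapsto F(u)$ indexed by $P_{<x} = \{u \in P : u < x\}$, after which the asserted isomorphism of colimits will follow from the standard finality criterion for colimits.

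First I would verify that the cube $\X_{u_0, \dots, u_{n-1}}$ is well-defined. Since $n \ge 2$ and the $u_i$ are distinct parents of $x$, \autoref{lemma:join_of_parents} gives $u_i \vee u_j = x$ for $i \neq j$, so $u_0, \dots, u_{n-1}$ is a pairwise cover of $x$ in the sense of \autoref{def:poset_cube}. Moreover, for every $S \subsetneq [n-1]$ there is some $i \notin S$, hence $\X(S) \le u_i < x$, so $\X$ corestricts to a functor $\Po_n \setminus \{[n-1]\} \to P_{<x}$.

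Next I would check cofinality. For each $u \in P_{<x}$, the relevant comma category is
\[
\{ S \subsetneq [n-1] : u \le \X(S) \}.
\]
By the universal property of the meet, $u \le \X(S) = \bigwedge_{i \notin S} u_i$ iff $u \le u_i$ for every $i \notin S$. Setting $I_u = \{i \in [n-1] : u \le u_i\}$ and $S_u = [n-1] \setminus I_u$, this condition becomes $S \supseteq S_u$. The comma category therefore equals $\{S \subsetneq [n-1] : S_u \subseteq S\}$, which is non-empty precisely when $S_u \subsetneq [n-1]$, i.e., when $I_u \neq \emptyset$; in that case $S_u$ is a minimum element, so the comma category is connected.

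The main obstacle is thus reduced to showing $I_u \neq \emptyset$, which is where I expect the down-finiteness hypothesis to enter essentially. Since $P$ is down-finite, the interval $[u, x]$ is finite, so it contains a maximal chain $u = v_0 < v_1 < \dots < v_k = x$; by maximality each step is a cover, so $v_{k-1} \prec x$ is a parent of $x$, say $v_{k-1} = u_{i_0}$, whence $u \le u_{i_0}$ and $i_0 \in I_u$. Invoking the cofinality criterion then yields
\[
L_x F = \colim_{u < x} F(u) \;\cong\; \colim_{S \subsetneq [n-1]} F \circ \X_{u_0, \dots, u_{n-1}},
\]
completing the argument.
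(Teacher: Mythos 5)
Your argument is correct and proceeds along the same basic line as the paper's proof: both reduce the claim to finality of the functor $\X_{u_0,\dots,u_{n-1}}$ restricted to $\Po_n \setminus \{[n-1]\}$, mapping into $P_{<x}$. The difference is in how finality is established. The paper observes that the image of this restricted cube contains all maximal elements of $\{y : y < x\}$ and is meet-closed, then invokes \cite[Lemma 4.24]{hem2025posetfunctorcocalculusapplications}. You instead verify the comma-category criterion for finality directly: the category $\{S \subsetneq [n-1] : u \le \X(S)\}$ is identified as $\{S : S_u \subseteq S \subsetneq [n-1]\}$, which has a minimum (hence is connected) and is nonempty iff $I_u = \{i : u \le u_i\} \neq \emptyset$; you then prove $I_u \neq \emptyset$ via a maximal chain in the finite interval $[u,x]$. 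This is more self-contained, avoids the citation, and has the pedagogical advantage of pinpointing exactly where down-finiteness is used. One small caveat on phrasing: calling the restricted cube a ``cofinal subdiagram'' presupposes that $S \mapsto \X(S)$ is injective on proper subsets, which you neither state nor need; the rest of your write-up correctly treats $\X$ as a functor and verifies finality of that functor, which is the right formulation and makes the injectivity question moot.
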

\begin{proof}
    Recall that by \autoref{lemma:num_of_parents}, the number of parents of $x$ does indeed equal $n$.
    By \autoref{lemma:join_of_parents}, $u_0, \dots, u_{n-1}$ form a pairwise cover of $x$, so by \autoref{lem:bicartesian_from_codecomp}, $\X_{u_0, \dots, u_{n-1}}$ is a well-defined strongly bicartesian $n$-cube.
    
    It suffices to show that the inclusion $\{\X_{u_0, \dots, u_{n-1}}(S) : S \subsetneq [n-1]\} \subseteq \{y \in P : y < x\}$ is a final functor. As the left hand side contains all maximal elements in the right hand side, and is meet-closed, this follows from \cite[Lemma 4.24]{hem2025posetfunctorcocalculusapplications}.
\end{proof}

\begin{lemma}\label{lemma:codeg1_mono_is_proj}
    Let $P$ be a down-finite distributive lattice. If $F \colon P \to \Vecs_\Fb$ is monomorphic and codegree 1, then $F$ is a projective object.
\end{lemma}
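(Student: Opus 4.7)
The plan is to verify the latching map criterion \eqref{eq:latching_map_cofibrant}: for every $x \in P$, the canonical map $L_x F \to F(x)$ is a monomorphism. Since $P$ is down-finite, each latching object is a finite colimit computed inside $x^\downarrow$, and the criterion applies (restricting, if one wishes to stay strictly within the finite-poset setup, to the finite meet-sublattice $x^\downarrow$). I proceed by cases on $\jdim(x)$, which by \autoref{lemma:num_of_parents} equals the number of parents of $x$.

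If $\jdim(x) = 0$, then $x$ is minimal, so $\{y : y < x\} = \emptyset$, $L_x F = 0$, and the latching map is trivially a monomorphism. If $\jdim(x) = 1$, then by \autoref{lemma:latching_ob_cube_dim1}, $L_x F \cong F(y)$ for the unique parent $y \prec x$, and the latching map is the structure map $F(y \le x)$, which is a monomorphism because $F$ is monomorphic.

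The substantive case is $\jdim(x) = n \ge 2$. Let $u_0, \dots, u_{n-1}$ be the parents of $x$. By \autoref{lemma:latching_ob_cube}, $L_x F \cong \colim_{S \subsetneq [n-1]} F \circ \X_{u_0,\dots,u_{n-1}}(S)$, and by \autoref{lem:bicartesian_from_codecomp} the cube $\X_{u_0, \dots, u_{n-1}}$ is strongly bicartesian in $P$. In particular, each of its $2$-faces is strongly bicartesian, so by codegree $1$ the image of each such $2$-face is a pushout square in $\Vecs_\Fb$. The standard cubical fact that a cube whose every $2$-face is a pushout is itself cocartesian (indeed strongly cocartesian) then implies that $L_x F \to F(x)$ is an isomorphism, hence in particular a monomorphism.

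The main technical point is the upgrade from control of $2$-cubes (codegree $1$) to the corresponding statement for arbitrary $n$-cubes. This is bridged by iterated pushout pasting, or equivalently by observing that codegree $1$ implies codegree $k$ for all $k \ge 1$ by the same argument applied inductively to the $2$-faces of a strongly bicartesian $(k+1)$-cube. Combining the three cases, the latching map criterion holds at every $x \in P$, so $F$ is cofibrant in the projective model structure on $\Fun(P, \Vecs_{\Fb})$, and therefore projective.
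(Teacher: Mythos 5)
Your proposal is correct and follows essentially the same route as the paper: verify the latching-map criterion \eqref{eq:latching_map_cofibrant} by cases on $\jdim(x)$, invoking \autoref{lemma:num_of_parents}, \autoref{lemma:latching_ob_cube_dim1}, and \autoref{lemma:latching_ob_cube}, and using codegree 1 in the $\jdim(x)\ge 2$ case. The one thing you add beyond the paper's proof is making explicit why ``codegree 1'' forces the latching map to be an isomorphism when $\jdim(x)=n\ge 2$ — namely that each $2$-face of the strongly bicartesian $n$-cube $\X_{u_0,\dots,u_{n-1}}$ is carried to a pushout, and a cube all of whose $2$-faces are pushouts is cocartesian — a step the paper leaves implicit; this is a genuine and worthwhile clarification, not a different argument.
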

\begin{proof}
    We show that $F$ is cofibrant in $\projPvec$.
    We need to verify \autoref{eq:latching_map_cofibrant} for every $x \in P$.
    The equation is trivially satisfied for the minimal element in $P$.
    
    If $\jdim(x) = 1$, then $L_x F = F(y)$, where $y \prec x$, by \autoref{lemma:latching_ob_cube_dim1}.
    As $F$ is monomorphic, the map $F(y) \to F(x)$ is a monomorphism.

    Now suppose $\jdim(x) \ge 2$, and let $u_0, \dots, u_{n-1}$ be the parents of $x$. Then, by \autoref{lemma:latching_ob_cube}, the latching map \eqref{eq:latching_map_cofibrant} is
    \begin{equation*}
        \underset{S \subsetneq [n-1]}{\colim} F \circ \X_{u_0, \dots, u_{n-1}} \ \to \ F(x).
    \end{equation*}
    As $F$ is codegree 1, this is an isomorphism, and in particular a monomorphism. This concludes the proof.
\end{proof}

\begin{example}
We show that every middle-exact functor $F \colon \Nn \times \Nn \to \Vecs_{\Fb}$ decomposes as a direct sum of a degree 1 functor and a codegree 1 functor.
Observe that $\Nn \times \Nn$ is a down-finite distributive lattice, and hence a join-factorization lattice.

We show first that $\varepsilon_1 \colon T_1 F \to F$ is a cofibration in $\projPvec$. We need to verify that the latching map \eqref{eq:latching_map} is a monomorphism for all $x = (u,v) \in P$.

First, suppose that $x = (0,0)$. Then the latching map is
\begin{equation*}
    T_1 F(0,0) \to F(0,0),
\end{equation*}
which is an isomorphism.

Next, suppose that $x$ has dim 1. Assume without loss of generality that $x = (u, 0)$ for some $u > 0$. Then the latching map is
\begin{equation*}
    F(u-1,0) \coprod_{T_1 F(u-1,0)} T_1 F(u,0) \to F(u,0),
\end{equation*}
which is an isomorphism as $T_1 F (u-1,0) = F(u-1,0)$ and $T_1 F (u,0) = F(u,0)$.

Now, suppose that $x = (u,v)$ for some $u > 0, v > 0$. Then the latching map is
\begin{equation}\label{eq:example_reedy}
    L_{(u,v)} F \coprod_{L_{(u,v)} T_1 F} T_1 F(u,v) \to F(u,v).
\end{equation}

As $T_1 F$ is codegree 1, we have
\begin{equation}
   L_{(u,v)} T_1 F \cong \colim \left(
            \begin{tikzcd}[column sep = scriptsize]
            T_1 F(u-1,v-1) \arrow[d] \arrow[r] & T_1 F(u-1, v) \\
            T_1 F(u, v-1)                     &  
            \end{tikzcd}
    \right) 
    \cong T_1 F (u,v).
\end{equation}
Hence, \eqref{eq:example_reedy} simplifies to
\begin{equation*}
    L_{(u,v)} F \to F,
\end{equation*}
which is
\begin{equation}
   \colim \left(
            \begin{tikzcd}[column sep = scriptsize]
            F(u-1,v-1) \arrow[d] \arrow[r] & F(u-1, v) \\
            F(u, v-1)                     &  
            \end{tikzcd}
    \right) 
    \to F (u,v).
\end{equation}
This is a monomorphism by \autoref{lemma:me_square_equiv_cond}, since $F$ is middle-exact.

Hence, $\varepsilon_1 \colon T_1 F \to F$ is a cofibration in $\projPvec$. Thus, $C = \coker \varepsilon_1$ is cofibrant in $\projPvec$, and therefore $C$ is projective, and the sequence
\begin{equation*}
    T_1 F \to F \to C
\end{equation*}
splits. In other words, $F \cong T_1 F \oplus C$. By \autoref{lemma:projective_is_deg1}, $C$ is deg 1. In particular, $F$ is the direct sum of a degree 1 functor and a codegree 1 functor.
\end{example}

\subsection{The Reedy model structure and injective objects}

Everything in the previous section dualizes. We state the main definitions and results without proof, as the proofs dualize formally.

Let $\Fun_{\textrm{inj}}(P, \Vecs_{\Fb})$ denote the model structure on $\Fun(P, \Vecs_{\Fb})$ where the weak equivalences are all morphisms and the cofibrations are the objectwise epi natural transformations.

\begin{lemma}\label{lemma:restricting_fibrant}
    Let $P$ be a lattice satisfying the ascending chain condition, and $Q \subset P$ a finite join-sublattice.
    Suppose $F$ is an injective object in $\Fun(P, \Vecs_{\Fb})$. Then the restriction $F|_Q$ is an injective object in $\Fun(Q, \Vecs_\Fb)$.
\end{lemma}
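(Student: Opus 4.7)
Since the paper has already flagged that the proofs in this subsection dualize formally from their cofibrant counterparts, the plan is to dualize the argument for \autoref{lemma:restricting_cofibrant} term-by-term: swap meets with joins, descending with ascending, latching with matching, and left with right throughout.

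Concretely, I would let $m$ be the least element of $Q$ and set $P' = \{x \in P : x \ge m\}$, viewed as an upward-closed subposet of $P$. By the dual of the latching-object inspection in the cofibrant case, $F|_{P'}$ is still fibrant (i.e., injective) in $\Fun_{\textrm{inj}}(P', \Vecs_\Fb)$, since the matching object $\lim_{y > x} F(y)$ at any $x \in P'$ involves only elements of $P'$. Then I would let $i \colon Q \hookrightarrow P'$ be the inclusion and define the dual of the retraction used in the cofibrant case,
\begin{equation*}
    r \colon P' \to Q, \qquad r(x) = \bigvee_{u \in Q, u \le x} u,
\end{equation*}
which is well-defined because $Q$ is closed under joins and the indexing set contains $m$. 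Routine verifications give $r \circ i = \id_Q$ and $i \circ r(x) \le x$ for every $x \in P'$, hence $i \dashv r$. Precomposition then yields $r^* \dashv i^*$ on functor categories, which is a Quillen adjunction with respect to the injective model structures; so $i^*$ is right Quillen, preserves fibrant objects, and $F|_Q = F|_{P'} \circ i$ is injective in $\Fun(Q, \Vecs_\Fb)$.

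The main subtlety I expect is the existence of a least element $m$ of $Q$: the statement ``join-sublattice'' must be read as ``full sublattice closed under both meets and joins'', mirroring the implicit assumption in \autoref{lemma:restricting_cofibrant} that the meet-sublattice $Q$ there has a greatest element. Apart from this point, verifying the Quillen adjunction is essentially automatic, since $r^*$ preserves objectwise monomorphisms tautologically and every morphism is a weak equivalence in $\Fun_{\textrm{inj}}$, so trivial cofibrations are preserved as well.
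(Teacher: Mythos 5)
Your proof is correct and is precisely the formal dualization the paper intends — the paper itself gives no argument for this lemma, stating only that the results in this subsection ``dualize formally'' from the projective case, so your term-by-term dualization (including the adjunction $i \dashv r$ yielding $r^* \dashv i^*$ and hence $i^*$ right Quillen) is exactly the expected proof. Your caveat about $Q$ needing a least element is a sharp observation, but as you note it is inherited verbatim from the primal lemma (whose proof silently requires the meet-sublattice to have a greatest element), so it reflects a terminological imprecision in the paper's hypotheses rather than any gap in your dualization.
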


\begin{lemma}\label{lemma:injective_is_codeg1}
    Let $P$ be a lattice satisfying the ascending chain condition.
    Let $F$ be an injective object in $\Fun(P, \Vecs_\Fb)$. Then $F$ is epimorphic and codegree 1.
\end{lemma}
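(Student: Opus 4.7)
The plan is to mirror the proof of Lemma \ref{lemma:projective_is_deg1} under the duality between the projective Reedy structure used above and its injective counterpart. Since $P$ satisfies the ascending chain condition, $\Fun(P, \Vecs_\Fb)$ admits an injective Reedy model structure; the dual of the characterization \eqref{eq:latching_map_cofibrant} says that a functor $F$ is fibrant—equivalently, an injective object—if and only if for every $x \in P$ the \emph{matching map}
\begin{equation*}
    F(x) \to \lim_{y > x} F(y)
\end{equation*}
is an epimorphism. Checking that this Reedy setup dualizes cleanly and that Lemma \ref{lemma:restricting_fibrant} supplies the right restriction property is the only point in the argument that is not purely formal; once it is in hand, the remaining steps are the formal mirror of the proof of Lemma \ref{lemma:projective_is_deg1}.

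First I would show that $F$ is epimorphic. Given $x \le y$ in $P$, restrict to the two-element join-sublattice $Q = \{x, y\}$. By Lemma \ref{lemma:restricting_fibrant}, $F|_Q$ is injective in $\Fun(Q, \Vecs_\Fb)$, and the matching map at $x$ inside $Q$ reduces to $F(x) \to F(y)$. Hence this map is an epimorphism, and $F$ is epimorphic.

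Next I would prove that $F$ is codegree 1. For any $x, y \in P$, consider the join-sublattice $Q = \{x \wedge y,\, x,\, y,\, x \vee y\}$. Applying Lemma \ref{lemma:restricting_fibrant} again, $F|_Q$ is injective, so the matching map at $x \wedge y$ inside $Q$, namely
\begin{equation*}
    F(x \wedge y) \to F(x) \times_{F(x \vee y)} F(y),
\end{equation*}
is an epimorphism. By Lemma \ref{lemma:me_square_equiv_cond}, this is exactly the statement that the canonical square on the four values $F(x \wedge y), F(x), F(y), F(x \vee y)$ is middle-exact. Since $F$ is now known to be epimorphic, the map $F(x) \to F(x \vee y)$ is an epimorphism, so the dual part of Lemma \ref{lemma:me_and_mono_is_pb} forces this middle-exact square to be a pushout. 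Therefore $F$ is codegree 1.
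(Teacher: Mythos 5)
Your proof is correct and is exactly the formal dualization of the proof of \autoref{lemma:projective_is_deg1} that the paper has in mind (the paper explicitly declines to spell it out, saying ``the proofs dualize formally''). You have correctly identified the matching map as the dual of the latching map, correctly computed it on the finite join-sublattices $\{x,y\}$ and $\{x\wedge y, x, y, x\vee y\}$, and correctly invoked condition (3) of \autoref{lemma:me_square_equiv_cond} together with the dual half of \autoref{lemma:me_and_mono_is_pb}.
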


\begin{lemma}\label{lemma:deg1_epi_is_inj}
    Let $P$ be an up-finite distributive lattice. If $F \colon P \to \Vecs_\Fb$ is epimorphic and degree 1, then $F$ is an injective object.
\end{lemma}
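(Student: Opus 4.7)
The plan is to dualize the proof of \autoref{lemma:codeg1_mono_is_proj} step by step. Working in the injective model structure $\Fun_{\textrm{inj}}(P, \Vecs_\Fb)$ recalled just before the lemma statement, the fibrant objects are precisely the injective objects in $\Fun(P, \Vecs_\Fb)$. Dualizing the Reedy/latching picture, $F$ is fibrant exactly when, for every $x \in P$, the \emph{matching map}
\begin{equation*}
    F(x) \to M_x F := \underset{y > x}{\lim}\, F(y)
\end{equation*}
is an epimorphism, so it suffices to verify this condition at every $x \in P$.

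To carry out the verification, I would split into cases on $\mdim(x)$, using the duals of \autoref{lemma:latching_ob_cube_dim1} and \autoref{lemma:latching_ob_cube}. These go through because $P$ is up-finite (so satisfies the ascending chain condition), every element has exactly $\mdim(x)$ covers by the dual of \autoref{lemma:num_of_parents}, and the role of pairwise covers is played by pairwise co-covers $u^i \wedge u^j = x$ (dual to \autoref{lemma:join_of_parents}). If $x$ is maximal then $\mdim(x) = 0$ and $M_x F$ is the terminal object $0$, so the matching map is trivially an epimorphism. If $\mdim(x) = 1$, the dual computation gives $M_x F \cong F(y)$ for the unique cover $y \succ x$, and $F(x) \to F(y)$ is an epimorphism since $F$ is epimorphic. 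If $\mdim(x) = n \ge 2$ with covers $u_0, \dots, u_{n-1}$, these form a pairwise co-cover of $x$, giving rise to a strongly bicartesian cube $\X^{u_0,\dots,u_{n-1}} \colon \Po_n \to P$ dual to \autoref{def:poset_cube} whose value at $\emptyset$ is $x$ and whose value at a nonempty $S$ is $\bigvee_{i \in S} u_i$. The dual of \autoref{lemma:latching_ob_cube} then identifies $M_x F$ with $\lim_{\emptyset \neq S \subseteq [n-1]} F \circ \X^{u_0,\dots,u_{n-1}}(S)$. Since $F$ is degree 1, it sends this strongly bicartesian cube to a cartesian one, so the matching map $F(x) \to M_x F$ is in fact an isomorphism, hence an epimorphism.

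Having verified the matching condition at every object, $F$ is fibrant in $\Fun_{\textrm{inj}}(P, \Vecs_\Fb)$ and therefore an injective object in $\Fun(P, \Vecs_\Fb)$. The only mildly delicate point is confirming that the relevant statements from the previous subsection genuinely dualize under the up-finite hypothesis rather than the down-finite one, but this is formal: the paper already remarks that ``everything in the previous section dualizes'', and up-finite is exactly the dual of down-finite.
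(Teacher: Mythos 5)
Your proof is correct and is exactly what the paper intends: the paper omits the proof, stating that "the proofs dualize formally," and your argument is a faithful, detailed dualization of the proof of \autoref{lemma:codeg1_mono_is_proj}, replacing latching objects with matching objects, join-dimension/parents with meet-dimension/covers, and the codegree 1 cocartesian condition with the degree 1 cartesian condition.
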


\subsection{Structure of injective and projective objects}

A persistence module $F \colon P \to \Vecs_{\Fb}$ is said to be \emph{free} if it is decomposable into interval modules of the form $\Fb_{a^\uparrow}$.
The following result is stated in \cite[Proposition 5]{projectiveDiagsFree}.
\begin{lemma}\label{lemma:projective_structure}
    Let $P$ be a poset and $\Fb$ a field.
    Projective objects in $\Fun(P, \Vecs_\Fb)$ are free.
\end{lemma}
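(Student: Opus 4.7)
The plan is to establish both directions of the equivalence between freeness and projectivity: that free modules are projective is easy, and the main work is showing every projective is free.

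For the easy direction, I would first observe that each representable module $\Fb_{a^\uparrow}$ is projective: the Yoneda-type isomorphism $\Hom(\Fb_{a^\uparrow}, G) \cong G(a)$ identifies $\Hom(\Fb_{a^\uparrow}, -)$ with the evaluation functor at $a$, which is exact on $\Fun(P, \Vecs_{\Fb})$ since limits and colimits are computed pointwise. Arbitrary direct sums of projectives are projective in any abelian category with enough structure (here because $\Hom(-, G)$ turns coproducts into products), so every free module is projective.

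For the converse, let $F$ be a projective object. The key idea is to extract a basis by looking at where new elements ``first appear''. For each $a \in P$, consider the natural map $L_a F \to F(a)$, where $L_a F = \colim_{b < a} F(b)$. I would choose a complementary subspace $V_a \subseteq F(a)$ and a basis $B_a$ of $V_a$. Each $v \in B_a$, viewed as an element of $F(a)$, determines by the Yoneda-type identification a natural transformation $\Fb_{a^\uparrow} \to F$. Assembling these yields a map
\begin{equation*}
    \phi \colon \bigoplus_{a \in P} \bigoplus_{v \in B_a} \Fb_{a^\uparrow} \longrightarrow F.
\end{equation*}
Surjectivity of $\phi$ at each point $x \in P$ can be proved by a transfinite downward argument: every element of $F(x)$ is either in the image of some $F(b) \to F(x)$ with $b < x$ (and hence hit by induction) or lies in a chosen complement $V_x$ (and is thus hit directly). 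Then, since $F$ is projective, the surjection $\phi$ admits a section and $F$ becomes a direct summand of the free module on the left.

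The main obstacle is upgrading ``direct summand of free'' to ``free''. In general, summands of free modules need not be free, so one must exploit the specific combinatorial structure of the $\phi$ above to verify that $\phi$ is itself an isomorphism rather than merely a split epimorphism. The cleanest route I foresee is to check injectivity of $\phi$ at each $x \in P$ by induction on the poset $x^\downarrow$: an element in the kernel at $x$ would have to be supported on basis elements indexed by $a \le x$, and the construction of $V_a$ as a complement to the latching image forces the coefficients of generators indexed at $a = x$ to vanish, while generators with $a < x$ are handled inductively. This inductive step relies on the basis elements at $a$ being genuinely independent from everything arising from $b < a$, which is precisely what the complement condition provides; extending the argument carefully to posets without chain conditions is the delicate point, but it can be reduced to finitely generated subposets since any fixed element of $F(x)$ lives in the image of a finitely presented subdiagram.
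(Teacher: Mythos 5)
The paper does not prove this lemma; it cites it from Höppner and Lenzing (``Projective diagrams over partially ordered sets are free'', J.\ Pure Appl.\ Algebra 20, 1981), which is the reference labelled \texttt{projectiveDiagsFree}. So there is no proof in the paper to compare against---you are supplying a proof from scratch, and it has gaps.

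Your construction via latching objects and complements is the right intuition, and the argument does go through when $P$ satisfies the descending chain condition (where the paper's Reedy-model-structure machinery gives that $L_x F \to F(x)$ is a monomorphism for projective $F$, and then Noetherian induction on $P$ shows $\phi$ is an isomorphism). But the lemma is stated for an arbitrary poset, and two steps break in general. First, the ``transfinite downward argument'' for surjectivity is not really an induction scheme: well-founded (Noetherian) induction requires the descending chain condition, and posets such as $\R$ or $\Z$ have strictly decreasing sequences, so the unwinding $w = v_x + (\text{latching part})$ can recurse forever without terminating. Second, and more seriously, your injectivity argument is circular. When you reduce to showing that an element $\sum_{a < x} c_a v_a$ of the kernel of $\phi_x$ vanishes, what you actually need is that the family $\{F(a\le x)(v_a)\}$ is linearly independent inside $F(x)$---which is exactly the statement that $F$ is free on the chosen generators. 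The inductive hypothesis (injectivity of $\phi_y$ for $y<x$) does not control this: a generator $v_a$ at $a<x$ can map injectively into $F(y)$ for $y$ near $a$ yet die in $F(x)$, as the non-projective example $\Fb_{[0,1]}\colon\R\to\Vecs_{\Fb}$ shows (there $\phi_0$ is an isomorphism but $\phi_2$ has nonzero kernel). To make your inductive step valid you would need to prove that the latching map $L_x F\to F(x)$ is a monomorphism and that $L_x$ of the free module agrees with $\bigoplus_{a<x}\Fb^{B_a}$; you never invoke projectivity of $F$ to establish the former, and without it the argument would wrongly conclude that \emph{every} functor is free. Finally, your proposed reduction to ``finitely presented subdiagrams'' does not obviously work either: a single element $v\in F(x)$ does live in a finite subdiagram, but projectivity of $F$ over $P$ does not restrict to projectivity over an arbitrary finite subposet, so the finite case does not immediately bootstrap to the general one. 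These are exactly the difficulties that make the Höppner--Lenzing result nontrivial; the cited proof uses substantially different machinery to handle posets without chain conditions.
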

It follows from \autoref{lemma:codeg1_mono_is_proj} that if $P$ is a finite distributive lattice and $F \colon P \to \Vecs_\Fb$ is monomorphic and codegree 1, then it is free.

Note that the dual to this lemma does not hold, as shown in \cite{injectiveDiags}. However, when $P$ satisfies the ascending chain condition (\autoref{def:descending_chain}), a similar result can be stated.

A persistence module $F \colon P \to \Vecs_{\Fb}$ is said to be \emph{T-standard} if it is isomorphic to a module of the form
\begin{equation*}
    \prod_{a \in I} \Fb_{a^\downarrow}.
\end{equation*}
The following result is stated in \cite[Proposition 2.2]{injectiveDiags}.
\begin{lemma}\label{lemma:injective_structure}
    Let $P$ be a poset satisfying the ascending chain condition, and let $\Fb$ be a field.
    Then every injective object in $\Fun(P, \Vecs_{\Fb})$ is T-standard.
\end{lemma}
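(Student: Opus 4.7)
The plan is to exhibit every injective functor as a retract of an explicit T-standard module, and then to argue that such a retract is itself T-standard.

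First I would verify that T-standard modules are themselves injective. For any $a \in P$, the interval $a^{\downarrow}$ is down-closed, and it is directed because $a$ is its maximum; hence \autoref{lemma:directed_ideal_injective} shows that $\Fb_{a^{\downarrow}}$ is injective. Since a product of injective objects is injective in any complete abelian category, every T-standard module is injective, even without invoking the ascending chain condition.

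Second, for an arbitrary functor $F \colon P \to \Vecs_{\Fb}$, I would construct a canonical objectwise monomorphism into a T-standard module. For each $a \in P$ and each $\phi \in F(a)^{*}$, the assignment
\[
    v \in F(x) \ \longmapsto \ \begin{cases} \phi(F(x \le a)(v)), & x \le a, \\ 0, & x \not\le a, \end{cases}
\]
defines a natural transformation $F \to \Fb_{a^{\downarrow}}$ (naturality uses functoriality of $F$ on the chains $x \le y \le a$ and the fact that $y \not\le a$ forces $x \not\le a$ to fail only below the cutoff). Assembling them yields a natural map
\[
    \eta_{F} \colon F \longrightarrow J(F) := \prod_{a \in P} \Fb_{a^{\downarrow}}^{F(a)^{*}}.
\]
Projecting the evaluation at $x$ onto the factor $a = x$ recovers the canonical map $F(x) \to \Fb^{F(x)^{*}}$, which is injective because $F(x)^{*}$ separates points of $F(x)$ over the field $\Fb$; hence $\eta_{F}$ is an objectwise monomorphism. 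When $F$ is injective, the identity on $F$ extends along $\eta_{F}$, exhibiting $F$ as a direct summand of the T-standard module $J(F)$.

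The main obstacle is the final step: showing that a direct summand of a T-standard module remains T-standard, and this is where the ascending chain condition plays its essential role. The approach I would take is a transfinite induction on $P$, organized by the antichains of maximal elements of successive down-sets (whose existence at each stage is guaranteed by ACC). At each stage one peels off the top stratum by analyzing the idempotent that cuts out the given summand in the factor $\prod_{a \text{ maximal}} \Fb_{a^{\downarrow}}^{d_{a}}$, identifies the contribution to the summand as a direct sum of copies of $\Fb_{b^{\downarrow}}$ for $b$ in that antichain, and then applies the inductive hypothesis to the residual functor obtained by restricting to the complement. Carrying out this combinatorial/structural induction, and in particular verifying that no indecomposable summand other than those of the form $\Fb_{b^{\downarrow}}$ can appear, is the genuinely subtle part of the argument; the preceding reduction to summands of $J(F)$ is essentially formal.
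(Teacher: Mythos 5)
The paper does not prove this lemma; it cites it directly as \cite[Proposition 2.2]{injectiveDiags}. Your proposal is therefore being compared against a citation, not an argument.

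That said, the strategy you lay out is the natural one, and the first three parts are correct. Your embedding $\eta_F \colon F \to J(F) = \prod_{a\in P}\Fb_{a^\downarrow}^{F(a)^*}$ is well-defined (the naturality check is exactly as you say: if $x \le y \le a$ then functoriality gives the commutation, and if $x\not\le a$ then $y \not\le a$ as well, so both composites vanish), and projecting onto the factor at $a=x$ recovers the double-dual embedding $F(x) \hookrightarrow \Fb^{F(x)^*}$, so $\eta_F$ is an objectwise monomorphism. Since $\Fb_{a^\downarrow}$ is injective by \autoref{lemma:directed_ideal_injective} and products of injectives are injective, $J(F)$ is injective, and the retract argument is standard. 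Note also that this part does not use ACC at all, so you have correctly isolated where the hypothesis must enter.

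However, the final step is a genuine gap, not a loose end. You have reduced the lemma to the statement that a direct summand of a T-standard module is T-standard, but in the absence of pointwise finite-dimensionality that statement is essentially the entire content of the lemma, and the ``transfinite induction peeling off maximal strata'' you describe is not an argument yet. Two specific problems: (a) the idempotent $e$ cutting out your summand need not respect the stratification of $J(F)$ by the antichain of maximal $a$'s --- the component of $e$ going from the factor $\Fb_{a^\downarrow}$ to $\Fb_{b^\downarrow}$ is nonzero whenever $a^\downarrow \cap b^\downarrow \neq \emptyset$, so the ``top stratum'' block of $e$ is entangled with everything below it; and (b) even if one could diagonalize at a single stratum, controlling the residual after subtracting that stratum and showing the induction terminates (and that no non-T-standard indecomposables survive) requires a Krull--Schmidt--type argument for infinite products, which is exactly what is subtle here. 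In short, your reduction is formal and your proposed closing step names the difficulty without resolving it. The honest comparison: the paper outsources this to \cite{injectiveDiags}, and your argument would need to either reproduce or cite that structural result to be complete.
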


In particular, when $P$ satisfies the ascending chain condition, every p.f.d., injective object in $\Fun(P, \Vecs_{\Fb})$ is interval decomposable.
Moreover, by \autoref{lemma:deg1_epi_is_inj}, if $P$ is a finite distributive lattice and $F \colon P \to \Vecs_\Fb$ is epimorphic and degree 1, then it is T-standard.

\section{Higher-dimensional multipersistence modules}

In this section, we move from the 2-parameter paradigm to more complicated posets.
We first study \emph{bidegree $n$} functors, i.e., functors that are both degree $n$ and codegree 1, and show that under certain conditions on the source poset target, $T_1$ ``preserves degree 1'' in the sense that it sends degree 1 functors to bidegree 1 functors (and dually).
We further study the \emph{layers} of the Taylor tower and show that the $n$th layer is bidegree $n$.

We then apply these result to prove several novel results on multipersistence modules whose source poset is a product of $n$ finite total orders, for $n \ge 2$.
We show that bidegree 1 p.f.d. multipersistence modules of these kind are always interval decomposable.
We further prove a theorem on the structure of these multiparameter persistence modules, which holds even when the modules are not p.f.d.
Specifically, we show that they are always isomorphic to the direct sum of an injective module, a projective module, and a bidegree 1 module.

\subsection{Theory of poset (co)calculus for functors into abelian categories}

\subsubsection{Bidegree 1 functors}

We now generalize \autoref{def:middle_exactness} to functors with source any lattice. We will use the word \emph{2-middle-exactness} in this case, to be consistent with the definitions introduced in \cite{lebovici2024localcharacterizationblockdecomposabilitymultiparameter}.

\begin{definition}\label{def:2_middle_exactness}
    Let $P$ be a lattice, and $\A$ an abelian category.
    A persistence module $F \colon P \to \A$ is \emph{2-middle-exact} if, for all $x, y \in P$, the square
    \begin{center}
    \begin{tikzcd}
    F(x \wedge y) \arrow[r] \arrow[d] & F(x) \arrow[d] \\
    F(y) \arrow[r]           & F(x \vee y)
    \end{tikzcd}
    \end{center}
    is middle-exact.
\end{definition}

\begin{lemma}\label{lemma:me_mono_is_deg1}
    Let $P$ be a lattice, $\A$ an abelian category, and $F \colon P \to \A$ 2-middle-exact.
    If $F$ is monomorphic, then $F$ is degree 1.

    Dually, if $F$ is epimorphic, then $F$ is codegree 1.
\end{lemma}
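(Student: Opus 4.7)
The plan is to reduce the claim to the square-level statement \autoref{lemma:me_and_mono_is_pb}, which already gives exactly what we need for 2-cubes.

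First I would unpack what degree 1 means in this context: $F$ must send every strongly bicartesian 2-cube in $P$ to a cartesian square in $\A$. By \autoref{lem:codecomp_from_bicartesian}, every strongly bicartesian 2-cube in the lattice $P$ has the form $\X_{x^0, x^1}$ for a pairwise cover $x^0, x^1$ of $v = x^0 \vee x^1$; explicitly, it is the square with corners $x^0 \wedge x^1, x^0, x^1, x^0 \vee x^1$. So the task reduces to showing that for every pair $x^0, x^1 \in P$, the image
\begin{equation*}
\begin{tikzcd}
F(x^0 \wedge x^1) \arrow[r] \arrow[d] & F(x^0) \arrow[d] \\
F(x^1) \arrow[r] & F(x^0 \vee x^1)
\end{tikzcd}
\end{equation*}
is a pullback square in $\A$.

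Next I would invoke 2-middle-exactness: by \autoref{def:2_middle_exactness}, this square is middle-exact. Then, since $F$ is monomorphic, the structure map $F(x^0 \wedge x^1) \to F(x^0)$ (and also $F(x^0 \wedge x^1) \to F(x^1)$) is a monomorphism. Applying \autoref{lemma:me_and_mono_is_pb} to this middle-exact square with a monomorphic side, we conclude the square is a pullback, i.e.\ cartesian. Hence $F$ is degree 1.

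The dual statement follows by the same argument, applied to the dual half of \autoref{lemma:me_and_mono_is_pb}: under the epimorphic hypothesis on $F$, one of the maps out of $F(x^0)$ or $F(x^1)$ to $F(x^0 \vee x^1)$ is an epimorphism, so each image square is a pushout, and thus $F$ sends strongly bicartesian 2-cubes to cocartesian squares, making $F$ codegree 1. I expect no real obstacle here — the whole proof is essentially a one-step reduction — but the only subtlety worth noting is that the lemma hypothesis only assumes $P$ is a lattice, while degree/codegree 1 is formally defined for distributive lattices; however, \autoref{lem:codecomp_from_bicartesian} does not require distributivity to characterize strongly bicartesian 2-cubes, so the argument goes through verbatim.
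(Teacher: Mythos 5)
Your proof is correct and follows exactly the route the paper takes: the paper's own proof is the one-liner ``This follows directly from \autoref{lemma:me_and_mono_is_pb}.'' You have simply filled in the intermediate details (identifying strongly bicartesian $2$-cubes via \autoref{lem:codecomp_from_bicartesian}, invoking $2$-middle-exactness, then applying \autoref{lemma:me_and_mono_is_pb}).
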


\begin{proof}
    This follows directly from \autoref{lemma:me_and_mono_is_pb}.
    %
\end{proof}

Recall that a subset $Q \subseteq P$ is \emph{down-closed} if for all $x \in Q$ and $y \in P$ such that $y \le x$, we have $y \in Q$.

\begin{lemma}\label{lemma:middle_exact_0_on_subposet}
    Let $\A$ be an abelian category, and let $P$ be a down-finite distributive lattice such that $P_{\le 1}$ is down-closed. Let $F \colon P \to \A$ be a 2-middle-exact functor such that $F|_{P_{\le 1}} \cong 0$. Then $F$ is monomorphic and degree 1.

    Dually, let $P$ be an up-finite distributive lattice such that $P^{\le 1}$ is up-closed. If $F \colon P \to \A$ is a 2-middle-exact functor such that $F|_{P^{\le 1}} \cong 0$, $F$ is epimorphic and codegree 1.
\end{lemma}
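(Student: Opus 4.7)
The plan is to reduce the claim to showing that $F$ is monomorphic, since \autoref{lemma:me_mono_is_deg1} then directly yields that $F$ is degree 1. The dual statement will follow by a formally dual argument, which I only sketch.

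The key sub-step is the following observation: for every $x \in P$ and every join-irreducible $z \in P$, the map $F(x) \to F(x \vee z)$ is a monomorphism. Since $z$ is join-irreducible, $z \in P_{\le 1}$, and the hypothesis that $P_{\le 1}$ is down-closed forces $x \wedge z \in P_{\le 1}$ as well, because $x \wedge z \le z$. By assumption $F$ vanishes on $P_{\le 1}$, so $F(z) = F(x \wedge z) = 0$, and the 2-middle-exact square at $(x, z)$ becomes
\begin{center}
\begin{tikzcd}
0 \arrow[r] \arrow[d] & F(x) \arrow[d] \\
0 \arrow[r] & F(x \vee z).
\end{tikzcd}
\end{center}
Its middle-exactness is equivalent to exactness of $0 \to F(x) \to F(x \vee z)$, which is exactly the claim.

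Next, I iterate this observation along a join-decomposition of $y$. Since $P$ is down-finite, it satisfies the descending chain condition, so by \autoref{thm:Birkhoff} every $y \in P$ admits a finite reduced indecomposable join-decomposition $y = q_0 \vee \dots \vee q_{m-1}$ with each $q_j$ join-irreducible. Given $x \le y$, we have $y = x \vee q_0 \vee \dots \vee q_{m-1}$, so $F(x) \to F(y)$ factors as
\begin{equation*}
    F(x) \to F(x \vee q_0) \to F(x \vee q_0 \vee q_1) \to \dots \to F(y),
\end{equation*}
and each arrow is a monomorphism by the key observation applied at $q_j$. Hence $F$ is monomorphic, and \autoref{lemma:me_mono_is_deg1} finishes the first half.

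The second half is formally dual. For every $y \in P$ and every meet-irreducible $z \in P^{\le 1}$, up-closure of $P^{\le 1}$ together with $y \vee z \ge z$ gives $y \vee z \in P^{\le 1}$, so $F(z) = F(y \vee z) = 0$. The corresponding middle-exact square then forces $F(y \wedge z) \to F(y)$ to be an epimorphism. Iterating along the meet-decomposition of $x$ given by the dual of \autoref{thm:Birkhoff}, which is available since $P$ is up-finite, shows that $F$ is epimorphic, and the dual of \autoref{lemma:me_mono_is_deg1} yields codegree 1. I do not foresee a real obstacle: the only point requiring care is that the topological hypotheses on $P_{\le 1}$ (resp. $P^{\le 1}$) are used precisely to ensure that $x \wedge z$ (resp. $y \vee z$) lies in the region where $F$ is known to vanish, which is what collapses the associated middle-exact square to a bare monomorphism (resp. epimorphism) condition.
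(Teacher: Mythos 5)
Your proof is correct, and it takes a genuinely different and somewhat cleaner route than the paper's. The paper fixes $y \in P$, writes $Q = y^\downarrow$, picks a topological ordering $x_1,\dots,x_N$ of the (finite) set $Q$, and argues by induction: at $x_{k+1}$ with $\jdim(x_{k+1}) \ge 2$ it selects two parents $u,z \prec x_{k+1}$ via \autoref{lemma:num_of_parents} and \autoref{lemma:join_of_parents}, applies the inductive hypothesis to conclude $F(u\wedge z \le z)$ is a monomorphism, and then invokes the kernel/cokernel \autoref{lemma:me_square_coker_injective} to transfer that to $F(u \le x_{k+1})$. You instead observe that for any $x$ and any $z \in P_{\le 1}$, down-closure forces $x\wedge z \in P_{\le 1}$ as well, so three corners of the middle-exact square at $(x,z)$ vanish and monomorphicity of $F(x) \to F(x\vee z)$ drops out directly; iterating this along a Birkhoff join-decomposition $y = q_0 \vee \dots \vee q_{m-1}$ gives monomorphicity of $F(x \le y)$ without any induction hypothesis. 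What each route buys: the paper's argument is a template that generalizes to other "middle-exact plus a base-case" situations where the base case is not simply $F|_{P_{\le 1}} \cong 0$; your argument is shorter, sidesteps \autoref{lemma:me_square_coker_injective} and \autoref{lemma:num_of_parents} entirely, and in fact only needs the descending chain condition (via \autoref{thm:Birkhoff}) rather than the full finiteness of $y^\downarrow$, so it would prove the statement under the slightly weaker DCC hypothesis. The dual half works exactly as you sketch it.
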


\begin{proof}
    We prove the first part. The second part is dual.

    Let $y \in P$, and set $Q = y^\downarrow$. We show that $F|_{Q}$ is monomorphic by induction.
    This will imply that $F$ is monomorphic, as $y$ was chosen arbitrarily.
    Let $x_1, \dots, x_N$ be an ordering of the elements in $Q$ such that $x_i \le x_j \implies i \le j$.

    First, observe that if $x \in Q_{\le 1}$, and $y \le x$, then $y \in Q_{\le 1}$ by assumption, so $F(y\le x)$ is simply the zero map $0 \to 0$, and thus a monomorphism.
    
    Suppose now that for all $i \le k$, we have that $F(y \le x_i)$ is a monomorphism for all $y \le x_i$, and suppose that $x_{k+1} \notin Q_{\le 1}$. We want to show that $F(y \le x_{k+1})$ is a monomorphism for all $y \le x_{k+1}$. Observe that it suffices to show this for $y \prec x_{k+1}$. By \autoref{lemma:num_of_parents} and \autoref{lemma:join_of_parents}, we can choose another $z \prec x_{k+1}$ such that $y \vee z = x_{k+1}$. Consider the middle-exact square
    \begin{center}
    \begin{tikzcd}
    F(y \wedge z) \arrow[r] \arrow[d] & F(z) \arrow[d] \\
    F(y) \arrow[r]           & F(x_{k+1}).
    \end{tikzcd}
    \end{center}
    The top arrow, $F(y \wedge z \le z)$ is a monomorphism, by the induction hypothesis. In particular, $\ker F(y \wedge z \le z) \cong 0$.
    Thus, $\ker F(y \le x_{k+1}) \cong 0$, by \autoref{lemma:me_square_coker_injective}.
    In other words, $F(y \le x_{k+1})$ is a monomorphism, as desired.

    We have shown that $F$ is monomorphic. Thus, by \autoref{lemma:me_mono_is_deg1}, $F$ is degree 1.
\end{proof}

\begin{definition}
    Let $P$ be a lattice and $\C$ a finitely bicomplete category. A functor $F \colon P \to \A$ is called \emph{bidegree $n$} if it is both degree $n$ and codegree $n$.
\end{definition}

\begin{remark}
    Bidegree 1 multipersistence modules are called \emph{2-exact} in \cite{lebovici2024localcharacterizationblockdecomposabilitymultiparameter}.
\end{remark}

\begin{proposition}\label{prop:T1_preserves_deg_1}
    Let $P$ be a down-finite distributive lattice such that $P_{\le 1}$ is down-closed, $\A$ an abelian category, and $F \colon P \to \A$ a degree 1 functor. Then $T_1 F$ is bidegree 1.

    Dually, if $P$ is an up-finite distributive lattice such that $P_{\ge 1}$ is down-closed, and $F \colon P \to \A$ is a codegree 1 functor, then $T^1 F$ is bidegree 1.
\end{proposition}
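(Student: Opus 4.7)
The plan is to show that $T_1 F$ is both codegree 1 and degree 1. Codegree 1 is immediate from \autoref{theoremA_cat}, and in particular this implies $T_1 F$ is 2-middle-exact, so by \autoref{lemma:me_square_equiv_cond} the canonical map $T_1 F(u \wedge w) \to T_1 F(u) \times_{T_1 F(u \vee w)} T_1 F(w)$ is an epimorphism for every $u, w \in P$. Since this pullback embeds canonically into $T_1 F(u) \oplus T_1 F(w)$, to upgrade the epi to an iso it suffices to show the composite $T_1 F(u \wedge w) \to T_1 F(u) \oplus T_1 F(w)$ is a monomorphism. The key intermediate claim will be that the counit $\varepsilon_1 \colon T_1 F \to F$ is pointwise a monomorphism.

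I will prove objectwise injectivity of $\varepsilon_1$ by induction on $\jdim(v)$, which is finite for every $v \in P$ by \autoref{thm:Birkhoff} since down-finiteness implies the descending chain condition. The base case $\jdim(v) \le 1$ is immediate because $T_1 F(v) = F(v)$. For the inductive step with $\jdim(v) = n \ge 2$, use the reduced indecomposable join-decomposition $v = p_1 \vee \cdots \vee p_n$ to write $v = u \vee p$ with $u = p_1 \vee \cdots \vee p_{n-1}$ of join-dimension $n-1$ and $p = p_n \in P_{\le 1}$. The down-closedness of $P_{\le 1}$ yields $u \wedge p \in P_{\le 1}$, so $T_1 F$ and $F$ coincide at both $u \wedge p$ and $p$. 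Applying codegree 1 of $T_1 F$ to the strongly bicartesian cube $\mathbf{X}_{u, p}$ of \autoref{lem:bicartesian_from_codecomp} exhibits $T_1 F(v)$ as the pushout $T_1 F(u) \oplus_{F(u \wedge p)} F(p)$. Given a class $[x, y]$ in this pushout mapping to $0$ in $F(v)$, degree 1 of $F$ identifies $F(u \wedge p)$ with the pullback $F(u) \times_{F(v)} F(p)$, producing a $c \in F(u \wedge p)$ whose image in $F(u) \oplus F(p)$ matches $(\varepsilon_1(x), -y)$; the inductive hypothesis that $\varepsilon_1$ is mono at $u$ then forces $c$ to also witness $[x, y] = 0$ in the pushout.

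With $\varepsilon_1$ pointwise mono in hand, the proof concludes quickly: for any $u, w$ the composite $T_1 F(u \wedge w) \hookrightarrow F(u \wedge w) \hookrightarrow F(u) \oplus F(w)$ is a monomorphism, because degree 1 of $F$ identifies $F(u \wedge w)$ with the pullback $F(u) \times_{F(u \vee w)} F(w)$, which is a subobject of the direct sum. Naturality of $\varepsilon_1$ shows this composite factors through $T_1 F(u) \oplus T_1 F(w)$, so the required map $T_1 F(u \wedge w) \to T_1 F(u) \oplus T_1 F(w)$ is a monomorphism and thus $T_1 F$ is degree 1. The main obstacle will be the kernel chase in the inductive step, which demands careful matching of the pushout-coequalizer quotient with the pullback description of $F(u \wedge p)$, and uses the inductive hypothesis precisely to cancel the ambiguity in the lift. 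The dual statement follows by applying the first part to $F \colon P^{\mathrm{op}} \to \A^{\mathrm{op}}$, under which the hypotheses translate to their dual counterparts on meet-dimension.
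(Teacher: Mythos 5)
Your proof is correct, but it takes a genuinely different path from the paper's. The paper forms $K = \ker(\varepsilon_1)$ and $C = \coker(\varepsilon_1)$, applies the snake lemma to the commuting ladder of middle-exact complexes to show $K$ and $C$ are $2$-middle-exact, invokes \autoref{lemma:middle_exact_0_on_subposet} to get $C$ degree 1, shows $K$ is codegree 1 and hence $K \cong T_1 K \cong 0$, and finally writes $T_1 F \cong \ker(F \to C)$ as a kernel of degree 1 functors. Your argument bypasses the cokernel entirely: you establish pointwise injectivity of $\varepsilon_1$ by an explicit induction on $\jdim(v)$, splitting $v = u \vee p$ via the reduced indecomposable join-decomposition (note that $\jdim(u) = n-1$ does hold, since removing one irreducible from a reduced decomposition of $v$ leaves a reduced decomposition of $u$), presenting $T_1 F(v)$ as the pushout $T_1 F(u) \oplus_{F(u\wedge p)} F(p)$, and using degree 1 of $F$ to lift a class killed by $\varepsilon_1$ to $F(u\wedge p)$, where the inductive hypothesis at $u$ finishes the cancellation. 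You then obtain the cartesian condition for $T_1F$ from injectivity of $\varepsilon_1$ plus the monomorphism $F(u\wedge w) \hookrightarrow F(u) \oplus F(w)$. What your route buys is a self-contained argument that does not rely on \autoref{lemma:middle_exact_0_on_subposet}; what it costs is that the pushout element chase (justified by a Freyd--Mitchell-type embedding, as the paper also uses implicitly elsewhere) needs care with signs and with the identification $T_1 F|_{P_{\le 1}} = F|_{P_{\le 1}}$, whereas the paper's snake-lemma argument handles the bookkeeping more structurally. Both approaches isolate the same core fact, namely that $\varepsilon_1$ is a pointwise monomorphism.
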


\begin{proof}
    We prove the first part. The second part is dual.

    Let $K = \ker(T_1 F \to F)$ and $C = \coker(T_1 F \to F)$. Note that $K|_{P_{\le 1}} \cong 0$ and $C|_{P_{\le 1}} \cong 0$. For $x,y \in P$, consider the following commuting diagram. 

    \begin{center}
        \begin{tikzcd}
            & K(x \wedge y) \arrow[r] \arrow[d]     & K(x) \oplus K(y) \arrow[r] \arrow[d]         & K(x \vee y) \arrow[d]               &   \\
            & T_1 F(x \wedge y) \arrow[r] \arrow[d] & T_1 F(x) \oplus T_1 F(y) \arrow[r] \arrow[d] & T_1 F(x \vee y) \arrow[r] \arrow[d] & 0 \\
0 \arrow[r] & F(x \wedge y) \arrow[r] \arrow[d]     & F(x) \oplus F(y) \arrow[r] \arrow[d]         & F(x \vee y) \arrow[d]               &   \\
            & C(x \wedge y) \arrow[r]               & C(x) \oplus C(y) \arrow[r]                   & C(x \vee y)                         &  
        \end{tikzcd}
    \end{center}

    By the snake lemma, both the top and bottom rows are exact, and so both $K$ and $C$ are 2-middle-exact functors.

    It now follows from \autoref{lemma:middle_exact_0_on_subposet} that $C$ is degree 1. Again applying the snake lemma to the diagram above, we see that $K$ is a codegree 1 functor. Thus, $K \cong T_1 K \cong 0$ (as $K|_{P_{\le 1}} = 0$).

    Finally, $T_1 F$ is now isomorphic to $\ker(F \to C)$, and is thus degree 1, as the limit of degree 1 functors is degree 1.
\end{proof}

\begin{example}\label{ex:down_closed}
    Examples of posets $P$ where $P_{\le 1}$ is down-closed include: products of total orders, the poset $(\Po(V), \subseteq)$ for any set $V$, and the poset $(\Nn_{> 0}, |)$ of positive integers with the total order given by the divisibility relation. In these posets, $P^{\le 1}$ is also up-closed.

    Non-examples include the following.
    Let $P$ be the following poset, which is a finite distributive lattice.
    \begin{center}
    \begin{tikzcd}
                       & t                       &                    \\
                       & d \arrow[u]             &                    \\
    b \arrow[ru] &                               & c \arrow[lu] \\
                       & a \arrow[lu] \arrow[ru] &                   
    \end{tikzcd}
    \end{center}
    Here, $t$ is join-irreducible, but $d < t$ is not. Hence, $P_{\le 1}$ is not down-closed.

\end{example}

\subsubsection{Layers}

Given a pointed category $\C$ with basepoint $*$, and a morphism $f \colon A \to B$ in $\C$, the \emph{fiber} of $f$, denoted $\fib(f)$, is defined as the limit 
\begin{equation*}
    \lim(A \xrightarrow{f} B \leftarrow *),
\end{equation*}
if it exists.
Dually, the \emph{cofiber} of $f$, denoted $\cofib(f)$, is defined as the colimit \begin{equation*}
    \colim(B \xleftarrow{f} A \to *),
\end{equation*}
if it exists.

In the following definition, observe that if $P$ is a distributive lattice with minimal element 0, then by \autoref{eq:T_n_cat}, $T_0 F$ is the constant functor at $F(0)$ (and dually, $T^0F$ is the constant functor at the maximal element).

\begin{definition}
    Let $\C$ be a complete pointed category, let $P$ be a distributive lattice, and let $n$ be a nonnegative integer. The $n$th \emph{layer} of $F$, denoted $D^n F$ is defined as the fiber
    \begin{equation*}
        D^n F = \fib(T^n F \to T^{n-1} F).
    \end{equation*}

    Dually, if $\C$ is a cocomplete pointed category, the $n$th \emph{colayer} of $F$ is the cofiber
    \begin{equation*}
        D_n F = \cofib(T_{n-1} F \to T_n F).
    \end{equation*}
\end{definition}

\begin{proposition}\label{prop:layers}
    Let $P$ be a finite product of total orders with minimal elements, $\C$ a complete pointed category, and $F \colon P \to \C$ any functor. Then $D_n F$ is bidegree $n$.

    Dually, let $P$ be a finite product of total orders with maximal elements, $\C$ a cocomplete pointed category, and $F \colon P \to \C$ any functor. Then $D^n F$ is bidegree $n$.
\end{proposition}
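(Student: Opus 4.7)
The plan is to prove the first statement (the second is dual). Set $D = D_n F = \cofib(T_{n-1} F \to T_n F)$, and proceed in three steps: show $D$ is codegree $n$; show $D|_{P_{\le n-1}} \simeq *$; combine these two facts to conclude $D$ is degree $n$.

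For codegree $n$: $T_n F$ is codegree $n$ by \autoref{theoremA_cat}, while $T_{n-1} F$ is codegree $n-1$, hence also codegree $n$ by a pasting argument. Specifically, any strongly bicartesian $(n+1)$-cube splits along any axis into two opposite, strongly bicartesian $n$-faces, and pasting two cocartesian $n$-cubes along their shared boundary yields a cocartesian $(n+1)$-cube. Since $D$ is a cofiber and colimits commute with colimits, $D$ takes strongly bicartesian $(n+1)$-cubes to cocartesian $(n+1)$-cubes. For the vanishing, equation \eqref{eq:T_n_cat} gives $T_n F(v) = F(v) = T_{n-1} F(v)$ for any $v \in P_{\le n-1}$, because $v$ itself is the maximum of each indexing subposet; the natural transformation $T_{n-1} F \to T_n F$ restricts to the identity at $v$, so $D(v) \simeq *$.

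The main obstacle is the degree $n$ property; the passage from codegree to degree is not formal. I would argue by induction on $n$. For the base case $n = 1$, the codegree $1$ condition on $D_1 F$ implies in particular that the canonical pushout-comparison map is an isomorphism, which is stronger than the monomorphism condition in \autoref{lemma:me_square_equiv_cond}, so $D_1 F$ is $2$-middle-exact; combined with the vanishing on $P_{\le 0}$, \autoref{lemma:middle_exact_0_on_subposet} yields degree $1$. For the inductive step, given a strongly bicartesian $(n+1)$-cube $\X = \X_{x^0, \dots, x^n}$ from \autoref{lem:codecomp_from_bicartesian}, I would exploit the product-of-total-orders structure of $P$ to partition the pairwise cover $x^0, \dots, x^n$ according to which coordinate axes their join-irreducible components lie on, producing a direct-sum decomposition of $D \circ \X$ whose summands reduce either to strictly lower-dimensional cubes (handled by induction) or to cubes where $D$ vanishes on enough vertices for the cartesian property to follow immediately.
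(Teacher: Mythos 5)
Your first two steps—establishing that $D_n F$ is codegree $n$ (as the cofiber of a map of codegree-$n$ functors) and that $D_n F$ vanishes on $P_{\le n-1}$—match the paper's opening moves. Your pasting argument for why codegree $n-1$ implies codegree $n$ is a standard fact, and the vanishing follows exactly as you say from \eqref{eq:T_n_cat}.

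The problem is the passage from codegree to degree, and specifically your base case. You invoke \autoref{lemma:middle_exact_0_on_subposet}, but that lemma requires $F|_{P_{\le 1}} \cong 0$, whereas $D_1 F$ only vanishes on $P_{\le 0}$; for join-irreducible $v$ one has $D_1 F(v) \cong \cofib(F(0) \to F(v))$, which is not zero in general. In fact the mismatch is structural: \autoref{lemma:middle_exact_0_on_subposet} concludes that $F$ is \emph{monomorphic} as well as degree $1$, and $D_1 F$ is generically not monomorphic (e.g.\ if $F(0) \to F(x,0) \to F(x',0)$ has growing kernel, $\coker(F(0)\to F(x,0)) \to \coker(F(0)\to F(x',0))$ fails to inject). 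So that lemma's hypotheses cannot hold for $D_1F$, and the cited route is a dead end. There is also a generality issue: \autoref{lemma:me_square_equiv_cond} and \autoref{lemma:middle_exact_0_on_subposet} are stated for abelian categories, whereas the proposition concerns an arbitrary (co)complete pointed category.

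Your inductive step is a plan rather than an argument—``producing a direct-sum decomposition of $D \circ \X$ whose summands reduce to lower-dimensional cubes or to cubes where $D$ vanishes on enough vertices'' names the target without establishing it. The paper's proof does carry out the idea you gesture at, but at the level of the whole functor rather than cube by cube: writing $P = S_0 \times \cdots \times S_k$ and $\lambda_S$ for the retraction that collapses coordinates outside $S$ to the minimal element, it shows
\begin{equation*}
    D_n F \ \cong\ \bigoplus_{\substack{S \subseteq [k]\\ |S| = n}} (D_n F) \circ \lambda_S,
\end{equation*}
using the colimit formula \eqref{eq:T_n_cat}, finality of the sub-indexing poset $\{\lambda_S(x)\}$, and the vanishing of $D_n F$ on $P_{\le n-1}$. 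Each summand factors through a projection onto $n$ coordinate axes, and the paper then cites \cite[Proposition 3.15]{hem2025posetfunctorcocalculusapplications}, which says precisely that such functors are bidegree $n$ (every strongly bicartesian $(n+1)$-cube in $P$ has a pair of opposite $n$-faces on which all but $n$ coordinates are constant, so the cube collapses to an identity in one direction). Since $T^n$ is additive, degree $n$ passes to the coproduct. To salvage your argument you would need to replace the appeal to \autoref{lemma:middle_exact_0_on_subposet} with this kind of coordinate-splitting of $D_n F$ itself, which eliminates the induction entirely.
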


\begin{proof}
    We give the proof for $D_n F$. The proof for $D^n F$ is dual.

    Let $0$ denote the basepoint in $\C$. Let $P = S_0 \times \dots \times S_k$, where each $S_i$ is a total order with minimal element $0_i$.
    For $S \subseteq [k]$, let $\lambda_S \colon P \to P$ be given by
    \begin{equation*}
        (\lambda_S ( x_0, \dots, x_k))_i = \begin{cases}
            x_i, \quad i \in S, \\
            0_i, \quad \textrm{otherwise.}
        \end{cases}
    \end{equation*}
    We first show that
    \begin{equation*}
        D_n F \cong \bigoplus_{S \subseteq [k], |S| = n} (D_n F) \circ \lambda_S.
    \end{equation*}
    
    We know that $D_n F$ is codegree $n$, since it is the cokernel of a natural transformation between codegree $n$ functors. Moreover, $D_n F$ is 0 on $P_{\le n-1}$. Hence, for $x = (x_1, \dots, x_k) \in P$,
    \begin{align*}
        D_n F (x) &= \underset{y \in P_{\le n}, y \le x}{\colim \ } D_n F(y) \cong \underset{S \subseteq [k], |S| \le n}{\colim} D_n F (\lambda_S(x)). 
    \end{align*}
    The last isomorphism follows from \cite[Remark 5.1]{hem2025posetfunctorcocalculusapplications}.
    Now, as $D_n F(\lambda_S(x)) \cong 0$ for $|S| \le n-1$ (as then $\lambda_S(x) \in P_{\le n-1}$), $D_n F (x)$ is isomorphic to
    \begin{equation*}
        \bigoplus_{S \subseteq [k], |S| = n} D_n F (\lambda_S(x)).
    \end{equation*}
    Hence, the canonical map
    \begin{equation*}
        \bigoplus_{S \subseteq [k], |S| = n} (D_n F)\circ \lambda_S \to D_n F
    \end{equation*}
    is an isomorphism.

    Now, as $T^n$ is an additive functor, it suffices to show that $(D_n F) \circ \lambda_S$ is degree $n$ for each $S \subseteq [k]$ with $|S| = n$. This follows directly from \cite[Proposition 3.15]{hem2025posetfunctorcocalculusapplications}, so we are done.
\end{proof}

\subsection{Application: Higher-dimensional middle exactness}

\subsubsection{Interval decomposition of bidegree 1 functors}

\begin{proposition}\label{prop:bideg1_interval_decomp}
    Let $P$ be a finite product of finite total orders, $\Fb$ a field, and $F \colon P \to \Vecs_{\Fb}$ a p.f.d. bidegree 1 functor. Then $F$ is interval decomposable.
\end{proposition}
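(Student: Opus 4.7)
The plan is to use the codegree 1 property to reduce the problem to an interval decomposability question on the star subposet $P_{\le 1}$, and then use the degree 1 property to force the required structure there. Since $P$ is a finite distributive lattice (hence a join-factorization lattice) and $F$ is codegree 1, \autoref{theoremB_cat} gives $F \cong T_1 F = \Lan_i(F|_{P_{\le 1}})$, where $i \colon P_{\le 1} \hookrightarrow P$ is the inclusion. The subposet $P_{\le 1}$ is the wedge of the total orders $T_1, \ldots, T_n$ at their common minimum $0$. By \autoref{thm:general_decomposition} I write $F|_{P_{\le 1}} \cong \bigoplus_\beta G_\beta$ as a direct sum of indecomposable p.f.d.\ summands, so that $F \cong \bigoplus_\beta \Lan_i G_\beta$. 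Each $\Lan_i G_\beta$ is a direct summand of the bidegree 1 functor $F$ and is therefore bidegree 1 itself, since bidegree 1 is preserved under direct summands in $\Fun(P, \Vecs_\Fb)$ (limits and colimits in $\Vecs_\Fb$ distribute over direct sums).

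The crux is the claim that each $G_\beta$ must be an interval module on $P_{\le 1}$. For $n \le 2$ this is automatic: the star $P_{\le 1}$ is a type-$A$ quiver, and every p.f.d.\ indecomposable representation of it is an interval module; this recovers \autoref{prop:codegree_1_interval_decomposable}. For $n \ge 3$ the star has wild representation type, so I must rule out non-interval indecomposable summands. I would argue by contradiction: any non-interval indecomposable $G$ on the star has $\dim G(0) \ge 2$ with the maps $G(0) \to G(v)$ (for join-irreducibles $v$ on three or more distinct rays) coupling the summands of $G(0)$ nontrivially. An explicit computation of $\Lan_i G$ at a vertex of the form $v_1 \vee v_2 \vee v_3$ as an iterated pushout would exhibit a strongly bicartesian $2$-cube in $P$ on which $\Lan_i G$ fails the cartesian condition, contradicting the degree 1 property of $\Lan_i G_\beta$.

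Once each $G_\beta$ is known to be an interval module on $P_{\le 1}$, a direct pushout computation (parallel to that in the proof of \autoref{prop:codegree_1_interval_decomposable}) shows that intervals contained in a single ray $T_j \setminus \{0_j\}$ extend under $\Lan_i$ to prism interval modules $\Fb_{T_1 \times \cdots \times I \times \cdots \times T_n}$, while intervals containing $0$, which necessarily have the form $I = \bigcup_j D_j$ for down-closed subsets $D_j \subseteq T_j$ containing $0_j$, extend to box interval modules $\Fb_{D_1 \times \cdots \times D_n}$. Combining the three steps then gives $F$ as a direct sum of interval modules. The main obstacle will be the rigidity argument of the previous paragraph: bidegree 1 of $\Lan_i G$ constrains $G$ only indirectly, via higher-codimension pushouts, and producing a contradicting bicartesian cube for a general non-interval indecomposable on a star with wild representation type requires careful bookkeeping of how the non-trivially coupled maps at the center propagate through the Kan extension.
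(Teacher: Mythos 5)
Your proposal takes a genuinely different route from the paper. You reduce via $F \cong T_1 F = \Lan_i(F|_{P_{\le 1}})$, decompose the restriction $F|_{P_{\le 1}}$ into indecomposables on the star $P_{\le 1}$, and aim to show that each indecomposable summand must be an interval module by using the degree-1 constraint on its Kan extension. The paper instead sets $K = \Ima(F(0) \to F)$ and works with the short exact sequence $0 \to K \to F \to D_1 F \to 0$: since $F \cong T_1F$, the cokernel is exactly the layer $D_1 F$, which is bidegree 1 by \autoref{prop:layers}; $K$ is then a kernel of a map between degree 1 functors, hence degree 1, and is epimorphic by construction, so \autoref{lemma:deg1_epi_is_inj} makes it injective and the sequence splits. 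The injective $K$ is $T$-standard (\autoref{lemma:injective_structure}), hence interval decomposable, while $D_1 F$ splits as $\bigoplus_i D_1 F \circ \lambda_{\{i\}}$, each summand factoring through a single total order, to which the one-parameter structure theorem applies. This route never has to analyze indecomposable representations on the star.

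The gap in your proposal is at the step you yourself flag: the claim that every indecomposable summand of $F|_{P_{\le 1}}$ is an interval module. For $n \ge 3$ the star has non-interval indecomposables, and you need to prove that if $G$ is such an indecomposable then $\Lan_i G$ fails degree 1 on some strongly bicartesian cube. The idea is plausible — e.g.\ for the indecomposable on a 3-arm star with $G(0) = \Fb^2$ surjecting onto three copies of $\Fb$ along pairwise distinct kernels, one finds $\Lan_i G$ vanishes at all $v_i \vee v_j$, so the 3-cube demands $\Fb^2 \cong \Fb^3$, a contradiction — but this is a single illustrative case. The general rigidity statement (which must handle arbitrary indecomposables on stars with any number of arms of any length) is not proved, only sketched, and this is precisely where the work of the proposition lies. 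As written, the argument reduces the proposition to an unproved claim of comparable difficulty rather than resolving it. You also say this recovers \autoref{prop:codegree_1_interval_decomposable} for $n \le 2$, but that proposition is about codegree 1 rather than bidegree 1 functors; the two statements overlap but are not the same, and the paper proves it by a separate direct pushout computation.
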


\begin{proof}
    Let $0$ denote the minimal element in $P$.
    As $F$ is codegree 1, $F \cong T_1 F$. Thus, $D_1 F = \coker(F(0) \to T_1F) \cong \coker(F(0) \to F)$. Let $K = \Ima \left( F(0) \to F\right)$, and consider the short exact sequence
    \begin{equation*}
        0 \to K \to F \to D_1 F \to 0.
    \end{equation*}
    By \autoref{prop:layers}, $D_1 F$ is degree 1, so $K$ is the kernel of degree 1 functors, and thus degree 1. Furthermore, it follows from the definition of $K$ that $K$ is epimorphic.
    Hence, $K$ is injective by \autoref{lemma:deg1_epi_is_inj}.
    Thus, $F \cong K \oplus D_1 F$, and $K$ is interval decomposable by \autoref{lemma:injective_structure}.

    It remains to show that $D_1 F$ is interval decomposable.
    Write $P = S_0 \times \dots \times S_N$, where each $S_i$ is a finite total order.
    Observe that by \autoref{prop:layers},
    \begin{equation*}
        D_n F \cong \bigoplus_{i \in [N]} D_n F \circ \lambda_{\{i\}}.
    \end{equation*}
    Observe further that $D_n F \circ \lambda_{\{i\}} \cong D_n F \circ j_i \circ \pi_i$, where $\pi_i \colon P \to S_i$ is the projection map and $j_i \colon S_i \to P$ is the inclusion given by
    \begin{equation*}
        j_i(u)_l = \begin{cases}
            u, \quad l = i \\
            0, \quad \textrm{otherwise.}
        \end{cases}
    \end{equation*}
    Now, $D_n F \circ j_i$ is interval decomposable as it is a p.f.d. persistence module from a finite total order, by the structure theorem of persistence modules. 
    Hence, $D_n F \circ \lambda_{\{i\}}$ is interval decomposable, which concludes the proof. 
\end{proof}

\begin{example}[Non-example]
    The following functor into $\Vecs_\Fb$, defined on the poset $P$ from \autoref{ex:down_closed}, is bidegree 1, but not interval decomposable.
    \begin{center}
    \begin{tikzcd}
                       & \Fb                       &                    \\
                       & \Fb^2 \arrow[u, "\begin{pmatrix}1 \ 1\end{pmatrix}"]             &                    \\
    \Fb \arrow[ru, "\begin{pmatrix}1 \\ 0\end{pmatrix}"] &                               & \Fb \arrow[lu, "\begin{pmatrix}0 \\ 1\end{pmatrix}"'] \\
                       & 0 \arrow[lu] \arrow[ru] &                   
    \end{tikzcd}
    \end{center}
    To see that it's bidegree 1, observe that the only nontrivial bicartesian square is the one consisting of the lower four elements $\{a,b,c,d\}$ (this is bicartesian as $a = b \wedge c$ and $d = b \vee c$). It suffices to check that the functor restricted to this square is both a pullback and pushout. In other words, one needs to check that the square
    \begin{center}
    \begin{tikzcd}
    \Fb \arrow[r, "\begin{pmatrix}1 \\ 0\end{pmatrix}"]    & \Fb^2               \\
    0 \arrow[u] \arrow[r] & \Fb \arrow[u, "\begin{pmatrix}0 \\ 1\end{pmatrix}"']
    \end{tikzcd}
    \end{center}
    is a pushout and a pullback, which is easily verified.
\end{example}

\begin{example}[Non-example]
We give an example showing that the \emph{bidegree 1} condition in \autoref{prop:bideg1_interval_decomp} cannot be relaxed to codegree 1 (or degree 1).
The following diagram, from \cite[Example 15]{lebovici2024localcharacterizationblockdecomposabilitymultiparameter}, shows an indecomposable multipersistence module $\{0,1\}^3 \to \Vecs_{\Fb}$.
\begin{center}
\begin{tikzcd}
& 0 \arrow[rr] \arrow[from=dd]              &                           & 0            \\
\Fb \arrow[rr, crossing over] \arrow[ru]                                               &                           & 0 \arrow[ru]              &              \\
& \Fb \arrow[rr] &                           & 0 \arrow[uu] \\
\Fb^2 \arrow[uu, "(0 \ 1)"] \arrow[rr, "(1 \ 1)"] \arrow[ru, "(1 \ 0)"] &                           & \Fb \arrow[uu, crossing over] \arrow[ru] &             
\end{tikzcd}
\end{center}
Every square in this cube is a pushout. Hence, the multipersistence module is a codegree 1 functor.

One can construct a similar example of an indecomposable \emph{degree 1} multipersistence module, by inverting the arrows and transposing the vectors in the above example.
\end{example}

\subsubsection{Higher-dimensional middle exactness}

We recall here the usual definition of the Koszul complex.
\begin{definition}
    Given a $k$-cube $\X \colon \Po([k-1]) \to \Vecs_{\Fb}$, the \emph{Koszul complex} of $\X$, denoted $K_{\X}$, is the chain complex in $\Vecs_{\Fb}$ given by
    \begin{equation*}
        (K_{\X})_i = \bigoplus_{S \subseteq [k], |S| = k-i} \X (S),
    \end{equation*}
    with differential $\partial_{i+1} \colon (K_{\X})_{i+1} \to (K_{\X})_{i}$ defined componentwise by
    \begin{equation*}
        \partial_{i+1}|_{\X(S)}
        = \sum_{j=0}^i (-1)^j \X (S \subseteq S \cup \{t_j\}),
    \end{equation*}
    where $t_0 < \dots < t_i$ are the elements in $[k] \setminus S$.
\end{definition}

We introduce the following definition, which generalizes that of $k$-middle-exactness in \cite{lebovici2024localcharacterizationblockdecomposabilitymultiparameter}.

\begin{definition}
    Let $P$ be a distributive lattice and $\Fb$ a field. A functor $F \colon P \to \Vecs_{\Fb}$ is said to be \emph{$k$-middle-exact} if for every strongly bicartesian $k$-cube $\X$ in $P$, the Koszul complex of $F \circ \X$ has trivial homology in all degrees $0 < i < k$.
\end{definition}

\begin{lemma}\label{lemma:cube_colimits_middle_exact}
    Let $F \colon \Po([k-1]) \to \Vecs_{\Fb}$ be a $k$-cube. Then
    \begin{equation*}
        \underset{{S \subsetneq [k-1]}}{\colim} \X(S) \cong \coker\left((K_\X)_2 \xrightarrow{\partial_2} (K_\X)_1\right),
    \end{equation*}
    and
    \begin{equation*}
        \lim_{\emptyset \subsetneq S \subseteq [k-1]} \X(S) \cong \ker\left((K_\X)_{k-2} \xrightarrow{\partial_{k-2}} (K_\X)_{k-3}\right).
    \end{equation*}
\end{lemma}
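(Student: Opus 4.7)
The plan is to prove each isomorphism by exhibiting an explicit pair of mutually inverse maps between the two concrete subquotients of finite direct sums. I focus on the colimit assertion first; the limit case will be handled dually. For the forward direction I would build $\phi \colon \coker \partial_2 \to \colim_{S \subsetneq [k-1]} \X(S)$ from the canonical inclusion $(K_\X)_1 = \bigoplus_{|S| = k-1} \X(S) \hookrightarrow \bigoplus_{S \subsetneq [k-1]} \X(S)$ followed by the quotient to the colimit. To check that this descends modulo $\Ima \partial_2$, note that for $|S| = k-2$ one has $|[k-1] \setminus S| = 2$, so $\partial_2|_{\X(S)}(x) = \X(S \subseteq S \cup \{t_0\})(x) - \X(S \subseteq S \cup \{t_1\})(x)$, and both summands represent the same class as $x$ in the colimit via the face-map identifications, so their difference vanishes.

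For the inverse $\psi \colon \colim_{S \subsetneq [k-1]} \X(S) \to \coker \partial_2$ I would proceed summand-wise: given $x \in \X(S)$, pick any $T \supseteq S$ with $|T| = k-1$ and set $\psi(x) = [\X(S \subseteq T)(x)]$. The main verification is independence of $T$. For two ``adjacent'' maximal supersets $T = [k-1] \setminus \{i\}$ and $T' = [k-1] \setminus \{j\}$ of $S$, setting $U = [k-1] \setminus \{i,j\}$ gives $|U| = k-2$ and $U \supseteq S$, and the vanishing of $\partial_2(\X(S \subseteq U)(x))$ in $\coker \partial_2$ identifies the two candidate representatives (the Koszul signs $(-1)^0$ and $(-1)^1$ cancelling exactly). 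Since any two valid choices of $T$ are linked by a chain of adjacent swaps of removed elements, $\psi$ is well defined on each summand, and it respects the colimit relations $\iota_S(x) \sim \iota_{T'}(\X(S \subseteq T')(x))$ by construction combined with the independence just shown. A direct inspection on the generators of $(K_\X)_1$ yields $\phi\psi = \id$ and $\psi\phi = \id$, closing the argument.

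The limit statement is handled dually: I would construct a comparison map out of $\lim_{\emptyset \subsetneq S \subseteq [k-1]} \X(S)$ by projecting a compatible family to its components at the appropriate end of the Koszul complex, verify via the face compatibility relations (together with the Koszul signs) that the image lies in the stated kernel, and invert by extending the projected data back up to a full compatible family using the face maps, with the kernel relations ensuring that the various extensions agree. The main obstacle I anticipate throughout is sign bookkeeping: although each two-term comparison cancels cleanly because of the $+/-$ alternation of the Koszul differential, one must confirm that iterating ``adjacent'' swaps among maximal supersets of a fixed $S$ does not accumulate spurious signs, and dually that the projection/extension pair in the limit case is compatible with the alternating signs. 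Routing every comparison of $T$ and $T'$ through the single intermediate $U = T \cap T'$ keeps all calculations at the two-term level and handles this uniformly.
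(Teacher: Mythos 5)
Your proof of the colimit isomorphism is correct and takes essentially the paper's approach, made more explicit: the paper restricts the indexing poset to the top two levels by finality and then says ``it follows from checking universal properties,'' whereas you spell out the mutually inverse maps $\phi$ and $\psi$, and your verification of well-definedness of $\psi$ by routing any two maximal supersets $T, T'$ of $S$ through $U = T \cap T'$ is exactly the content the paper elides. (A minor overstatement on your part: a maximal superset of $S$ is $[k-1]\setminus\{i\}$ for a single $i \notin S$, so any two distinct maximal supersets already differ by one swap; the ``chain'' is always of length one.) This half of your argument I would accept as written.

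For the limit half, you defer to duality and flag ``sign bookkeeping,'' but there are two genuine issues hiding there that your sketch does not resolve. First, as printed the statement appears to have an off-by-one error: the limit over nonempty $S$ is governed by the $|S|=1$ and $|S|=2$ levels, which sit in degrees $k-1$ and $k-2$ of $K_\X$, so the claim should be $\lim_{\emptyset \subsetneq S} \X(S) \cong \ker\bigl(\partial_{k-1}\colon (K_\X)_{k-1} \to (K_\X)_{k-2}\bigr)$; the case $k=2$, where the printed right-hand side reduces to $\X([1])$ rather than the pullback $\X(\{0\}) \times_{\X(\{0,1\})} \X(\{1\})$, makes this explicit. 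Second, and more importantly for your proposed construction, the naive projection $\lim \to (K_\X)_{k-1}$ does \emph{not} land in $\ker\partial_{k-1}$: with $a < b$ and $T = \{a,b\}$ the compatibility condition for the limit reads $\X(\{a\}\subseteq T)(x_{\{a\}}) = \X(\{b\}\subseteq T)(x_{\{b\}})$, while the Koszul signs give the kernel condition $(-1)^{b-1}\X(\{a\}\subseteq T)(x_{\{a\}}) + (-1)^{a}\X(\{b\}\subseteq T)(x_{\{b\}}) = 0$, which differ by the factor $(-1)^{a+b}$. You need to precompose with the rescaling isomorphism $(x_{\{a\}})_a \mapsto ((-1)^a x_{\{a\}})_a$ of $\bigoplus_{|S|=1}\X(S)$ before the two conditions match. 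This is a genuine asymmetry with the colimit case, where $\partial_2$ has exactly one $+$ and one $-$ and no such twist is needed, so the two halves are not perfectly ``dual'' in the naive sense; your sketch leaves the point unresolved rather than showing it is harmless. (The paper's own ``the second is dual'' also glosses over both of these points.)
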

\begin{proof}
    We prove the first isomorphism. The second is dual.
    By finality of the inclusion of the indexing posets, we have that,
    \begin{equation*}
        \underset{{S \subsetneq [k-1]}}{\colim} \X(S) \cong \underset{{S \subsetneq [k-1]}, \\ k-3 \le |S| \le k-2}{\colim} \X(S).
    \end{equation*}
    
    Unwinding the definition of the Koszul complex, we get
    \begin{align*}
        &\coker\left((K_\X)_2 \xrightarrow{\partial_2} (K_\X)_1\right) \\
        \cong &\coker\left(
            \bigoplus_{0 \le i_0 < i_1 \le k-1}\X([k] \setminus \{i_0, i_1\})
            \xrightarrow{\partial_2}
            \bigoplus_{0 \le j \le k-1}\X([k] \setminus \{j\})
        \right),
    \end{align*}
    where
    \begin{align*}
        &\partial_{2}|_{\X([k-1] \setminus \{i_0,i_1\})} \\
        = &\X(([k-1] \setminus \{i_0,i_1\}) \subseteq ([k-1] \setminus \{i_1\}))
        - \X(([k-1] \setminus \{i_0,i_1\}) \subseteq ([k-1] \setminus \{i_0\})).
    \end{align*}
    
    It follows from checking universal properties that
    \begin{equation*}
        \underset{{S \subsetneq [k-1]},\\ k-3 \le |S| \le k-2}{\colim} \X(S) \cong \coker\left((K_\X)_2 \xrightarrow{\partial_2} (K_\X)_1\right).
    \end{equation*}
\end{proof}

Let $P$ be a finite distributive lattice, and let $x \in P$. Let $u_0, \dots, u_{n-1}$ denote the parents of $x$. By \autoref{lemma:join_of_parents}, $\{u_0, \dots, u_{n-1}\}$ is a pairwise cover of $x$. By \autoref{lem:bicartesian_from_codecomp}, this pairwise cover gives rise to a strongly bicartesian $n$-cube $\X_{u_0, \dots, u_{n-1}}$, defined by
\begin{equation*}
    \X_{u_0, \dots, u_{n-1}} (S) = 
    \begin{cases}
        x, &\quad S = [n-1], \\ 
        \bigwedge_{i \notin S} u_i, &\quad \text{ otherwise.} \\
    \end{cases}
\end{equation*}

\begin{lemma}\label{lemma:middle_exact_cofibration}
    Let $P$ be a finite distributive lattice such that $P_{\le 1}$ is down-closed, and let $\Fb$ be a field. If $F \colon P \to \Fb$ is a functor that is $k$-middle-exact for all $k \ge 2$, then the canonical morphism
    \begin{equation*}
        \varepsilon_1 \colon T_1 F \to F
    \end{equation*}
    is a split monomorphism with projective cokernel.
    
    Dually, let $P$ be a finite distributive lattice such that $P^{\le 1}$ is up-closed, and let $\Fb$ be a field. If $F \colon P \to \Fb$ is a functor that is $k$-middle-exact for all $k \ge 2$, then the canonical morphism
    \begin{equation*}
        \eta^1 \colon F \to T^1 F
    \end{equation*}
    is a split epimorphism with injective kernel.
\end{lemma}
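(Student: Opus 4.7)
The plan is to mirror the strategy of the paper's earlier example for $F \colon \Nn \times \Nn \to \Vecs_\Fb$: one verifies that $\varepsilon_1 \colon T_1 F \to F$ is a cofibration in the projective model structure $\projPvec$, from which the splitting and the projectivity of the cokernel both follow. Since $P$ is finite, this projective model structure coincides with the Reedy model structure, and one must check that for every $x \in P$, the latching map
\begin{equation*}
    L_x F \coprod_{L_x T_1 F} T_1 F(x) \longrightarrow F(x)
\end{equation*}
is a monomorphism.

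When $\jdim(x) \le 1$, the down-closedness of $P_{\le 1}$ guarantees $\{y : y < x\} \subseteq P_{\le 1}$, and since $T_1 F$ coincides with $F$ on $P_{\le 1}$, the latching map reduces to an isomorphism. The main case is $\jdim(x) = n \ge 2$. Here, one uses \autoref{lemma:latching_ob_cube} (together with \autoref{lemma:num_of_parents}, which gives exactly $n$ parents $u_0,\dots,u_{n-1}$) to compute $L_x F$ and $L_x T_1 F$ as colimits of the values at the strongly bicartesian parent cube $\X_{u_0,\dots,u_{n-1}}$. Because $T_1 F$ is codegree 1 by \autoref{theoremA_cat}, applying it to this bicartesian cube yields a cocartesian cube, so $L_x T_1 F \to T_1 F(x)$ is an isomorphism; this causes the pushout in the latching map to collapse, reducing the condition to the requirement that $L_x F \to F(x)$ be a monomorphism. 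By \autoref{lemma:cube_colimits_middle_exact}, the latter is in turn equivalent to the vanishing of $H_1$ of the Koszul complex of $F \circ \X_{u_0,\dots,u_{n-1}}$, which is exactly what $n$-middle-exactness of $F$ provides.

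Once the cofibration property is established, the conclusion follows as in the paper's example: a snake-lemma-style diagram chase shows that $L_x C \to C(x)$ is a monomorphism at every $x$ (using that $C = \coker \varepsilon_1$ vanishes on $P_{\le 1}$ and that $L_x F \to F(x)$ is monic for $\jdim(x) \ge 2$), so $C$ is cofibrant in $\projPvec$, i.e., projective. The short exact sequence $0 \to T_1 F \to F \to C \to 0$ then splits, yielding $\varepsilon_1$ as a split monomorphism with projective cokernel. The dual statement follows by running the same argument in the injective model structure $\Fun_{\textrm{inj}}(P, \Vecs_\Fb)$, replacing latching maps with matching maps and invoking the dual lemmas of the preceding subsection.

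The hard part will be the $\jdim(x) \ge 2$ case: the key move is to recognize that the codegree 1 property of $T_1 F$ collapses the relevant pushout, reducing the Reedy cofibration condition to a Koszul-homology condition that matches $n$-middle-exactness exactly. The hypothesis that $P_{\le 1}$ is down-closed is what makes the boundary regime $\jdim(x) \le 1$ tractable, so it is essential for the case analysis to terminate cleanly.
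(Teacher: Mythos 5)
Your proposal is correct and follows essentially the same route as the paper: verify $\varepsilon_1$ is a Reedy cofibration by a case split on $\jdim(x)$, using down-closedness of $P_{\le 1}$ for the boundary cases, Lemma~\ref{lemma:latching_ob_cube} and the codegree~1 property of $T_1 F$ to collapse the latching pushout when $\jdim(x)\ge 2$, and Lemma~\ref{lemma:cube_colimits_middle_exact} to identify the remaining injectivity condition with the vanishing of $H_1$ of the Koszul complex of the parent cube. The only cosmetic difference is at the end: the paper simply notes that a cofibration has cofibrant cofiber (a pushout of $\varepsilon_1$ along $T_1F\to 0$), whereas you re-derive this by directly checking the latching condition for $C$; both are fine.
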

\begin{proof}
    We prove the first statement. The second is formally dual.

    It is sufficient to show that $\varepsilon_1$ is a cofibration in $\projPvec$. We need to show that the latching map \eqref{eq:latching_map} is a monomorphism for each $x \in P$.

    First, suppose $x$ is the minimal element in $P$, which has join-dimension 0. The latching map here is
    \begin{equation*}
        T_1 F(x) \to F(x),
    \end{equation*}
    which is an isomorphism.

    Now, suppose that $\jdim(x) = 1$, and let $y \prec x$. Observe that then, by assumption, $y \in P_{\le 1}$. The latching map at $x$ is, by \autoref{lemma:latching_ob_cube_dim1},
    \begin{equation*}
        F (y) \coprod_{T_1 F(y)} T_1 F(x) \to F(x),
    \end{equation*}
    which is an isomorphism as $T_1 F (y) = F(y)$, and $T_1 F(x) = F(x)$.

    Now, let $x \in P$ with $\jdim(x) \ge 2$. First, observe that
    \begin{equation*}
        L_x (T_1 F) \to T_1 F(x)
    \end{equation*}
    is an isomorphism, by \autoref{lemma:latching_ob_cube}.
    Let now $u_0, \dots, u_{n-1}$ be the parents of $x$, and let $\X = \X_{u_1, \dots, u_n}$.
    By \autoref{lemma:latching_ob_cube} and \autoref{lemma:cube_colimits_middle_exact}, the latching map $L_x F \to F(x)$ is the map
    \begin{equation*}
        \coker\left((K_\X)_2 \xrightarrow{\partial_2} (K_\X)_1\right) \ \to \ (K_{\X})_0.
    \end{equation*}
    Now, by definition, $H_1(K_\X)$ is precisely the kernel of this map. As we assumed $F$ to be $n$-middle-exact, $H_1(K_\X) \cong 0$, and so the latching map is a monomorphism.
    This concludes the proof.
\end{proof}

\begin{remark}
If $P$ is a poset that
satisfy both conditions of \autoref{lemma:middle_exact_cofibration},
i.e., if $P$ is a finite distributive lattice such that $P_{\le 1}$ is down-closed and $P^{\le 1}$ is up-closed,
then $P$ is a finite product of finite total orders.
We omit the details of the proof here, and just mention the main points of the argument.

Because every element in a finite distributive lattice has a unique reduced indecomposable join-decomposition, the structure of $P$ is entirely decided by the structure of $P_{\le 1}$. Supposing now that $P$ is not a product of total orders, there must be elements $x,y,z \in P_{\le 1}$ with $x < y \wedge z < y,z$. Considering the element
\begin{equation*}
    p = \bigvee_{a \in P, a \ngeq y \wedge z} a,
\end{equation*}
leads to a contradiction to the assumption that $P^{\le 1}$ is meet-closed.
\end{remark}

\begin{theorem}\label{thm:middle_exactnes_general}
    Let $P$ be a finite product of finite total orders, and let $\Fb$ be a field. Let $F \colon P \to \Vecs_{\Fb}$ be a functor that is $k$-middle-exact for all $k \ge 2$.
    
    Then there exist $B,K,C \colon P \to \Vecs_{\Fb}$ with $K$ injective, $C$ projective and $B$ bidegree 1, such that $F \cong B \oplus K \oplus C$.
\end{theorem}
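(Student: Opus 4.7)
The plan is to apply Lemma \ref{lemma:middle_exact_cofibration} twice and glue the outputs together using Proposition \ref{prop:T1_preserves_deg_1}. A finite product $P$ of finite total orders is simultaneously down-finite and up-finite, and both $P_{\le 1}$ and $P^{\le 1}$ are respectively down-closed and up-closed (see Example \ref{ex:down_closed}); both halves of Lemma \ref{lemma:middle_exact_cofibration} therefore apply.

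First I would invoke the dual half of Lemma \ref{lemma:middle_exact_cofibration} to obtain a splitting $F \cong T^1 F \oplus K$ with $K = \ker \eta^1$ injective. The key intermediate step is then to verify that $T^1 F$ is itself $k$-middle-exact for every $k \ge 2$, so that Lemma \ref{lemma:middle_exact_cofibration} can be reapplied. For this, I would note that for any strongly bicartesian cube $\X$ in $P$, the Koszul complex of $F \circ \X$ decomposes as the direct sum of those of $T^1 F \circ \X$ and $K \circ \X$, and since taking homology commutes with finite direct sums, vanishing middle homology for $F \circ \X$ forces the same vanishing for each summand.

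With $T^1 F$ now known to be $k$-middle-exact for all $k \ge 2$, the primal half of Lemma \ref{lemma:middle_exact_cofibration} yields $T^1 F \cong T_1 T^1 F \oplus C$ with $C = \coker \varepsilon_1$ projective. Setting $B = T_1 T^1 F$, I would then observe that $T^1 F$ is degree 1 by construction, so Proposition \ref{prop:T1_preserves_deg_1} (whose hypotheses are satisfied because $P$ is down-finite with $P_{\le 1}$ down-closed) upgrades $B$ to bidegree 1. Assembling the two splittings produces $F \cong B \oplus C \oplus K$, the decomposition sought. I anticipate no serious obstacle: the only subtle point is the middle-exactness descent to the summand $T^1 F$, and the additivity of the Koszul construction makes it immediate.
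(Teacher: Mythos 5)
Your proof is correct and uses the same three pillars as the paper's: both halves of Lemma~\ref{lemma:middle_exact_cofibration}, the identification $B = T_1 T^1 F$, and Proposition~\ref{prop:T1_preserves_deg_1}. The one genuine variation is how you connect the two applications of Lemma~\ref{lemma:middle_exact_cofibration}. The paper applies both halves to $F$ directly (yielding $F \cong T_1 F \oplus C$ and a split sequence $0 \to K \to F \to T^1 F \to 0$), then passes $T_1$ through the latter sequence by additivity and invokes Lemma~\ref{lemma:injective_is_codeg1} to identify $T_1 K \cong K$, obtaining $T_1 F \cong K \oplus T_1 T^1 F$. You instead peel off $K$ first, then reapply the primal half of Lemma~\ref{lemma:middle_exact_cofibration} to $T^1 F$; this requires your intermediate step that $T^1 F$ inherits $k$-middle-exactness from $F$, which you justify via additivity of the Koszul complex and of homology across the direct sum $F \cong T^1 F \oplus K$. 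That descent argument is sound and does the same work as the paper's use of Lemma~\ref{lemma:injective_is_codeg1}; it is a small reorganization rather than a different strategy, and either bookkeeping is fine.
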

\begin{proof}
    Let $K = \ker(F \to T^1F)$, and $C = \coker(T_1 F \to F)$.
    By \autoref{lemma:middle_exact_cofibration}, $C$ is projective, and the sequence
    \begin{equation*}
        0 \to T_1 F \to F \to C \to 0
    \end{equation*}
    splits, so $F \cong T_1 F \oplus C$.
    Furthermore, by the same lemma, $K$ is injective and the sequence
    \begin{equation*}
        0 \to K \to F \to T^1 F \to 0
    \end{equation*}
    is split exact, and so by additivity of $T_1$, the sequence
    \begin{equation*}
        0 \to T_1 K \to T_1 F \to T_1 T^1 F \to 0
    \end{equation*}
    is also split exact. By \autoref{lemma:injective_is_codeg1}, $T_1 K \cong K$, and so
    \begin{equation*}
        F \cong T_1 F \oplus C \cong T_1 T^1 F \oplus K \oplus C.
    \end{equation*}
    Applying \autoref{prop:T1_preserves_deg_1} concludes the proof.
\end{proof}

\begin{example}[Non-example]
    The following functor into $\Vecs_\Fb$, defined on the poset $P$ from \autoref{ex:down_closed}, is $k$-middle-exact for all $k \ge 2$, and is indecomposable, but it is neither projective, injective nor bidegree 1.
    \begin{center}
    \begin{tikzcd}
                       & 0                       &                    \\
                       & \Fb \arrow[u]             &                    \\
    0 \arrow[ru] &                               & 0 \arrow[lu] \\
                       & 0 \arrow[lu] \arrow[ru] &                   
    \end{tikzcd}
    \end{center}
    To see that it's $k$-middle-exact for all $k$, observe that the only nontrivial bicartesian $k$-cube, for $k \ge 2$, is the 2-cube (i.e., square) consisting of the lower four elements $\{a,b,c,d\}$. It suffices to check that the associated complex of the functor at this square is middle-exact. In other words, one needs to check that the complex
    \begin{equation*}
        0 \to 0 \oplus 0 \to \Fb
    \end{equation*}
    is middle-exact, which is easily verified.
\end{example}

\begin{corollary}
    Let $P = S_1 \times \dots \times S_N$ be a finite product of finite total orders, $\Fb$ a field, and $F \colon P \to \Vecs_{\Fb}$ a p.f.d. functor that is $k$-middle-exact for all $2 \le k \le N$. Then $F$ is interval decomposable.
\end{corollary}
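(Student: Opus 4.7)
The plan is to combine \autoref{thm:middle_exactnes_general} with the three known structural classifications of projective, injective, and bidegree 1 p.f.d.\ functors. Before invoking the main theorem, however, one must verify that the corollary's hypothesis — $k$-middle-exactness only for $2 \le k \le N$ — already implies the seemingly stronger hypothesis of the theorem, namely $k$-middle-exactness for all $k \ge 2$. Writing $v = (v_1, \dots, v_N)$ and $x_i = (x_{i,1}, \dots, x_{i,N})$, I would observe that if $x_0, \dots, x_{k-1}$ is a pairwise cover of $v \in P$, then for every coordinate $l$ at most one $x_i$ can satisfy $x_{i,l} < v_l$ (otherwise the pairwise join of two such members would be strictly below $v$ in coordinate $l$). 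Consequently, at most $N$ members of the cover are strictly below $v$, so for any $k > N$ some $x_{i_0}$ must equal $v$. By \autoref{def:poset_cube}, this forces the associated strongly bicartesian cube $\X_{x_0, \dots, x_{k-1}}$ to satisfy $\X(S) = \X(S \cup \{i_0\})$ for every $S$, i.e., to be constant in the $i_0$-th direction. A standard chain-homotopy argument then shows that the Koszul complex of any such prism cube is null-homotopic, so $k$-middle-exactness in degrees $k > N$ is automatic.

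With the hypothesis of \autoref{thm:middle_exactnes_general} now in hand, I would apply it to obtain a decomposition $F \cong B \oplus K \oplus C$ with $K$ injective, $C$ projective, and $B$ bidegree 1 in $\Fun(P, \Vecs_{\Fb})$. Since $F$ is p.f.d.\ and each of $B$, $K$, $C$ is a direct summand of $F$, they are all p.f.d. It remains to interval-decompose each summand separately. For $C$, apply \autoref{lemma:projective_structure}, which says projective objects are free and hence interval decomposable. For $K$, note that $P$ is finite and so satisfies the ascending chain condition; \autoref{lemma:injective_structure} then guarantees $K$ is T-standard, which (together with p.f.d.-ness) yields an interval decomposition. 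For $B$, use \autoref{prop:bideg1_interval_decomp} directly. Taking the direct sum of the three resulting interval decompositions produces an interval decomposition of $F$.

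The only real obstacle is the initial reduction showing that $k$-middle-exactness up to $k = N$ is all that the main theorem requires in this setting. The coordinate-deficiency argument itself is short, but one must be slightly careful in justifying the null-homotopy of the Koszul complex of a cube constant in one direction; everything else is a clean assembly of results already established in the paper.
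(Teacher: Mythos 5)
Your proof follows the same main steps as the paper: decompose $F \cong B \oplus K \oplus C$ via \autoref{thm:middle_exactnes_general}, then apply \autoref{lemma:projective_structure}, \autoref{lemma:injective_structure}, and \autoref{prop:bideg1_interval_decomp} to the three summands. Your genuine addition is the preliminary reduction establishing that $k$-middle-exactness for $2 \le k \le N$ implies it for all $k \ge 2$. The reduction is sound: in $P = S_1 \times \dots \times S_N$, a pairwise cover of $v$ of size $k > N$ must contain $v$ itself (each of the $N$ coordinates accommodates at most one cover element strictly below $v$ in that coordinate), and then by \autoref{def:poset_cube} the strongly bicartesian $k$-cube is constant in the corresponding direction, so the Koszul complex of $F$ composed with it is contractible and the middle-exactness condition holds vacuously. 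The paper cites \autoref{thm:middle_exactnes_general} directly without remarking that its hypothesis, as stated, requires $k$-middle-exactness for all $k \ge 2$; this is justified only by inspecting the proof of \autoref{lemma:middle_exact_cofibration} and noticing that it invokes $n$-middle-exactness only for $n = \jdim(x) \le N$. Your reduction treats the theorem as a black box and so makes the corollary self-contained, which is arguably the cleaner presentation.
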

\begin{proof}
    By \autoref{thm:middle_exactnes_general}, $F \cong B \oplus K \oplus C$ with $B$ bidegree 1, $K$ injective and $C$ projective. Now, $C$ is interval decomposable by \autoref{lemma:projective_structure}, $K$ is interval decomposable by \autoref{lemma:injective_structure} (as $K$ is p.f.d.) and $B$ is interval decomposable by \autoref{prop:bideg1_interval_decomp} (as $B$ is p.f.d.).
\end{proof}

\bibliographystyle{plain}
\bibliography{ref}

\end{document}